\DeclareRobustCommand{\SkipTocEntry}[5]{}
\newtheorem{theorem}{Theorem}
\newtheorem{prop}[theorem]{Proposition}
\newtheorem{lemma}[theorem]{Lemma}
\newtheorem{coro}[theorem]{Corollary}
\newtheorem{corollary*}{Corollary}
\newtheorem*{theorem*}{Theorem}
\newtheorem*{proposition*}{Proposition}
\newtheorem*{conjecture*}{Conjecture}
\numberwithin{equation}{section}
\numberwithin{theorem}{section}
\theoremstyle{remark}
\newtheorem{example}[theorem]{Example}
\newtheorem{rmk}[theorem]{Remark}
\newtheorem{construction}[theorem]{Construction}
\theoremstyle{definition}
\newtheorem{deff}[theorem]{Definition}
\newtheorem*{deff*}{Definition}
\newcommand{\oo}{\mathcal{O}}
\newcommand{\cc}{\mathcal{C}}
\newcommand{{\Ae}}{{A^{\rm e}}}
\newcommand{{\Aop}}{{A^{\rm op}}}
\newcommand{\foo}{\scriptstyle}
\newcommand{\ga}{\alpha}
\newcommand{\gd}{\delta} 
\newcommand{\gD}{\Delta} 
\newcommand{\gve}{\varepsilon}
\newcommand{\gvr}{\varrho} 
\newcommand{\gs}{\sigma}
\newcommand{\cC}{{\mathcal C}}
\newcommand{\Aut}{\operatorname{Aut}}
\newcommand{\Hom}{\operatorname{Hom}}
\newcommand{{\bull}}{{\scriptscriptstyle{\bullet}}}
\newcommand{\ass}{\mathsf{Ass}}
\newcommand{\N}{\ensuremath{\mathbb{N}}}
\newcommand{\id}{\mathrm{id}}
\newcommand{\Tor}{\operatorname{Tor}}
\newcommand{\Ext}{\operatorname{Ext}}
\newcommand\pig[1]{\scalerel*[5pt]{\big#1}{%
  \ensurestackMath{\addstackgap[1.5pt]{\big#1}}}}
\newcommand{\Sum}{\textstyle\sum\limits}
\newcommand{{\op}}{{{\rm op}}}
\newcommand{{\coop}}{{{\rm coop}}}
\newcommand{\kmod}{k\mbox{-}\mathbf{Mod}}
\newcommand{\adjunction}[4]{%
  #1\colon #2%
  \mathrel{\vcenter{%
    \offinterlineskip\m@th
    \ialign{%
      \hfil$##$\hfil\cr
      \longrightharpoonup\cr
      \noalign{\kern-.3ex}
      \smallbot\cr
      \longleftharpoondown\cr
    }%
  }}%
  #3 \noloc #4%
}
\newcommand{\longrightharpoonup}{\relbar\joinrel\rightharpoonup}
\newcommand{\longleftharpoondown}{\leftharpoondown\joinrel\relbar}
\newcommand\noloc{%
  \nobreak
  \mspace{6mu plus 1mu}
  {:}
  \nonscript\mkern-\thinmuskip
  \mathpunct{}
  \mspace{2mu}
}
\newcommand{\smallbot}{%
  \begingroup\setlength\unitlength{.15em}%
  \begin{picture}(1,1)
  \roundcap
  \polyline(0,0)(1,0)
  \polyline(0.5,0)(0.5,1)
  \end{picture}%
  \endgroup
}
\newcommand{\compactlist}[1]{\setlength{\itemsep}{0pt} \setlength{\parskip}{0pt} \setlength{\leftskip}{-0.#1em}}
\begin{document}

\title{Higher structures on homology groups}

\author{Niels Kowalzig}
\author{Francesca Pratali}

\begin{abstract}
We dualise the classical fact that an operad with multiplication leads to cohomology groups which form a Gerstenhaber algebra to the context of cooperads: as a result, a cooperad with comultiplication induces a homology theory that is endowed with the structure of a Gerstenhaber coalgebra, that is, it comes with a graded cocommutative coproduct which is compatible with a coantisymmetric cobracket in a dual Leibniz sense. As an application, one obtains Gerstenhaber coalgebra structures on Tor groups over bialgebras or Hopf algebras, as well as on Hochschild homology for Frobenius algebras.
\end{abstract}

\address{N.K.: Dipartimento di Matematica, Universit\`a di Roma Tor Vergata, Via della Ricerca Scientifica~1, 00133 Roma, Italy}

\email{kowalzig@mat.uniroma2.it}

\address{F.P.: Université Sorbonne Paris Nord, LAGA, CNRS, UMR 7539, 99 Avenue JB Clément,  F-93430, Villetaneuse, France}

\email{pratali@math.univ-paris13.fr}

\keywords{
  Cooperads, operads, Gerstenhaber coalgebras, (pre-Lie) cobrackets, coproducts, Hochschild and Hopf algebra homology, Frobenius algebras}

\subjclass[2020]{
18M70, 18M65, 16T10, 16E40, 18G15
}

\maketitle

\tableofcontents

\section*{Introduction}
When defining the cohomology or homology of a mathematical object by means of (co)chain complexes or derived
functors, a priori it is not evident how to obtain, if any, more structure on it other than that of a graded
module over some base ring. Hence, it is a natural endeavour to closer examine these (co)homology groups by investigating the possible existence of further (algebraic, geometric, or topological) structure and to understand in which sense these are connected to the features of the original object, a conceptual approach which leads to the study of {\em higher structures}.

This kind of study appears to be quite well-explored, see, among many others,
\cite{Ger:TCSOAAR, GerVor:HGAAMSO, McCSmi:ASODHCC, Men:BVAACCOHA, DolTamTsy:THGAOHCOARAIF, CalWil:TOTHFT, Wil:THBFM},
in the context of {\em cohomology} groups (or cochain complexes), such as, for example, those arising from operadic structures in presence of an operad multiplication,
leading to the notion of (homotopy) Gerstenhaber or even brace algebras (that is, algebras over various operads such as the (homotopy) Gerstenhaber operads $\mathcal{G}$ and $\mathcal{G_\infty}$, or the brace operad $\mathcal{B_\infty}$) plus stronger versions in the form of BV algebras. In striking contrast, it seems to be much less so for {\em homology} groups (or chain complexes),
apart from them being the receptacle for cohomological actions in the context of noncommutative differential calculi \cite{TamTsy:NCDCHBVAAFC}, or chain formality results \cite{Tsy:FCFC, Dol:AFTFHC}.

\addtocontents{toc}{\SkipTocEntry}
\subsection*{Aims and objectives}

The present article aims to contribute in this direction, {\em i.e.}, investigate higher structures on homology groups on their own. More precisely, while it is a well-known result that an operad with multiplication induces a cochain complex the cohomology of which carries a Gerstenhaber algebra structure, that is, the structure of a graded Lie bracket and a graded commutative product,
we are not aware of any explicit result that examines the case of a {\em cooperad with comultiplication}, leading to Gerstenhaber coalgebras that relate graded cocommutative coproducts and graded coantisymmetric cobrackets.
This already starts with the apparent lack of a detailed written-down  technical definition of a cooperad (by which we mean a definition by {\em partial} co-operations, see below)  with or without comultiplication, which will be the starting point in the introductory section.

\addtocontents{toc}{\SkipTocEntry}
\subsection*{Main results}

A planar or nonsymmetric {\em (counital) cooperad} will, in Definition \eqref{carciofo}, declared to be a sequence $\cC= \{\cC(n)\}_{n \in \N}$ of $k$-modules, for $k$ a commutative ring, endowed with $k$-linear (co)operations
$$
\gD_{p,q,i} \colon \cC(p+q-1) \to \cC(p) \otimes \cC(q)
$$
for $1 \leq i \leq p$, which correspond to degrafting a (possibly upside-down) tree at its $i^{\rm th}$ branch, subject to seemingly complicated sort of coassociativity relations, which, however, become quite clear if illustrated graphically; see the main text for all technical details and revealing figures.
A {\em cooperad with comultiplication} is then a cooperad with a cooperad morphism $\cC \to \ass^*$ to the dual of the unital associative operad, which can be, however, reformulated by saying that the $k$-linear dual $\cC^*$ is equipped with three special elements in
$
\cC(2)^*$, in $
\cC(1)^*$,
and in
$
\cC(0)^*$, respectively,
subject to certain relations that turn $\cC^*$ itself into an {\em operad} with multiplication. As we shall see, not only this suffices to equip $\cC$ with the structure of a simplicial $k$-module $(\cC_\bull, d_\bull, s_\bull)$ with corresponding differential $d$, 
but it also allows for a {\em cup} (or {\em Yoneda}) {\em coproduct} as a map
$$
\cup^c\colon \cC(n) \to \bigoplus_{j=1}^{n+1} \cC(j-1) \otimes \cC(n+1-j),
$$
which can be shown to be graded cocommutative up to homotopy terms defined by the differential $d$.
Indeed, summarising Propositions \ref{sugo}, \ref{zenzero}, and \ref{farro}, we are able to prove:

\begin{theorem*}
  A cooperadic comultiplication turns a cooperad $\cC$ into a simplicial $k$-module and at the same time equips it with a cup coproduct defining a dg coassociative (counital) coalgebra structure on $\cC$, whose respective homology groups $H(\cC_\bull, d)$ become a graded cocommutative coalgebra. 
  \end{theorem*}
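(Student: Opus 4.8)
\emph{The simplicial structure.} The plan is to establish the three assertions — the simplicial $k$-module, the dg coalgebra, and the graded cocommutativity on homology, i.e.\ Propositions~\ref{sugo}, \ref{zenzero} and~\ref{farro} — by dualising, step by step, the classical passage from an operad with multiplication to a cosimplicial $k$-module carrying a homotopy-commutative cup product (Gerstenhaber, Gerstenhaber--Voronov). Write $\mu\in\cC(2)^*$, $e\in\cC(1)^*$ and $\eta\in\cC(0)^*$ for the three elements of the comultiplication (equivalently the cooperad morphism $\cC\to\ass^*$), satisfying the relations that turn $\cC^*$ into an operad with multiplication. I would define faces by pairing a single degrafting against $\mu$, namely $d_i=(\id\otimes\mu)\circ\gD_{n-1,2,i}$ for $1\le i\le n-1$ and the two outer faces $d_0,d_n$ from $(\mu\otimes\id)\circ\gD_{2,n-1,1}$ and $(\mu\otimes\id)\circ\gD_{2,n-1,2}$, together with degeneracies $s_j=(\id\otimes\eta)\circ\gD_{n+1,0,j+1}$ for $0\le j\le n$. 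After pairing against $\mu$ and $\eta$, each simplicial identity unwinds to an instance of the coassociativity axioms of Definition~\eqref{carciofo} combined with the associativity and unitality relations among $\mu,e,\eta$: it is the literal transpose of a cosimplicial identity for an operad with multiplication, and the tree pictures render each one visibly true. This yields the chain complex $(\cC_\bull,d)$ with $d=\sum_i(-1)^i d_i$.

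\emph{The cup coproduct and the dg coalgebra.} I would introduce $\cup^c\colon\cC(n)\to\bigoplus_{p+q=n}\cC(p)\otimes\cC(q)$ componentwise as the transpose of the operadic cup product (grafting two operations onto the leaves of $\mu$), explicitly $\cup^c_{p,q}=(\mu\otimes\id\otimes\id)\circ(\gD_{2,p,1}\otimes\id)\circ\gD_{p+1,q,p+1}$. Coassociativity of $\cup^c$ is the transpose of the strict associativity of the cup product (which in turn reflects associativity of $\mu$), so it again reduces to coassociativity of $\cC$ plus the operad-with-multiplication relations; the counit is $\eta$, extended by zero on positive arities, dual to the nullary generator of $\ass$ being a two-sided unit for the cup product — which is precisely why the arity-zero summand has to be present. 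The one genuinely new computation is that $d$ is a coderivation, equivalently that $\cup^c$ is a chain map $\cC\to\cC\otimes\cC$ for the tensor-product differential; this is the transpose of Gerstenhaber's check that the Hochschild-type differential is a derivation for the cup product. I expect this to be the main obstacle: it interlocks the copies of $\mu$ that build $d$ with those that build $\cup^c$, so one must reorganise several iterated degraftings while tracking all the Koszul signs — again, this is cleanest when read off the trees.

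\emph{Homology.} As a chain map, $\cup^c$ induces $H(\cC_\bull,d)\to H(\cC\otimes\cC)$; composing with the inverse of the K\"unneth morphism $H(\cC)\otimes H(\cC)\to H(\cC\otimes\cC)$ — an isomorphism over a field $k$, and more generally under the flatness hypotheses I would adopt for the homological conclusions — gives a coproduct on $H(\cC_\bull,d)$ that is coassociative and counital because $\cup^c$ is. For graded cocommutativity I would exhibit, already at the chain level, a homotopy $h$ built from degrafting cooperations — the transpose of Gerstenhaber's $\circ$-type composition product — realising $\cup^c-\tau\circ\cup^c=d\circ h\pm h\circ d$ with $\tau$ the graded twist; this is the exact dual of the classical formula that exhibits the graded commutativity of the cup product on cohomology as a coboundary. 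On homology the term $d\circ h\pm h\circ d$ vanishes, so the induced coproduct is graded cocommutative, and assembling the three statements gives the theorem.
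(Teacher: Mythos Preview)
Your proposal is correct and follows essentially the same route as the paper: the face and degeneracy maps, the cup coproduct, and the chain homotopy (the cobrace $[-]$, dual to Gerstenhaber's $\circ$-product) all coincide with the paper's definitions, and the paper proves each claim by exactly the kind of direct computation with the coassociativity axioms~\eqref{orzo}--\eqref{orzo4} that you describe as ``unwinding'' or reading off the trees. One caution: your repeated phrase ``literal transpose'' is only a heuristic unless $\cC(n)$ is reflexive, so the actual argument must be the explicit cooperadic computation you also allude to --- which is precisely what the paper carries out (and note that the paper, unlike you, silently omits the K\"unneth step needed to land in $H(\cC)\otimes H(\cC)$).
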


The next step consists in constructing a coantisymmetric cobracket, obtained by the graded cocommutator $\{ - \}$ of the totalisation of all possible partial decompositions, {\em i.e.}, the sum
$$
[-]_n \colon \cc(n) \rightarrow \textstyle\bigoplus\limits_{p+q=n+1}\cc(p)\otimes \cc(q), \qquad [-]_n \coloneqq \Sum_{p+q=n+1} \Sum_{i=1}^p
(-1)^{\foo (q-1)(p-i)}
\Delta_{p,q,i},
$$
which we term, by analogy, a {\em cobrace}. A straightforward computation in Proposition \ref{lunapop} shows that these cobraces define a pre-Lie coalgebra structure, and we can therefore show  in Corollary \ref{sushi} that for the graded cocommutator $\{-\}$, shifting the degree by one, one has:

\begin{theorem*}
  For any comultiplicative cooperad $\cc$, its respective suspended complex $\cc_\bullet[1]$ can be given the structure of a dg Lie coalgebra.
  \end{theorem*}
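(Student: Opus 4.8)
The plan is to deduce the statement from the pre-Lie coalgebra structure of Proposition~\ref{lunapop} in two moves, mirroring the classical passage from a pre-Lie algebra to a Lie algebra together with Gerstenhaber's observation that the Hochschild differential is a \emph{strict} derivation of the bracket. Write $\{-\} \coloneqq (\id - \tau)\circ[-]$, where $\tau$ is the flip of the two tensor factors carrying the Koszul sign induced by the suspension $[1]$, so that $\{-\}$ becomes a degree-preserving map $\cc_\bullet[1] \to \cc_\bullet[1] \otimes \cc_\bullet[1]$. There are three things to check: (a) coantisymmetry $\tau\circ\{-\} = -\{-\}$; (b) the co-Jacobi identity; and (c) that the suspended differential $d$ is a coderivation for $\{-\}$. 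Items (a) and (b) make $(\cc_\bullet[1],\{-\})$ a Lie coalgebra, and (c) upgrades this to a dg Lie coalgebra.

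Step (a) is immediate: since $\tau^2 = \id$ on the suspended tensor square, $\tau\circ\{-\} = \tau\circ[-] - \tau\circ\tau\circ[-] = -(\id - \tau)\circ[-] = -\{-\}$. Step (b) is formal, being the exact dual of the fact that the antisymmetrisation of a (right) pre-Lie product satisfies Jacobi. Concretely, one writes the co-Jacobiator as the cyclic sum $(\id + \sigma + \sigma^2)\circ(\{-\}\otimes\id)\circ\{-\}$, with $\sigma$ the cyclic permutation of the three tensor factors (carrying Koszul signs), expands each occurrence of $\{-\}$ into the two summands $[-]$ and $-\tau\circ[-]$, and regroups; the co-pre-Lie identity of Proposition~\ref{lunapop} — which says precisely that the ``coassociator'' of $[-]$ is invariant, up to the Koszul sign, under transposing the two tensor factors it is fed into — forces the resulting terms to cancel, exactly as in the operadic case. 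One may instead simply invoke the general principle that the cocommutator functor sends pre-Lie coalgebras to Lie coalgebras.

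Step (c) carries the actual content. Recall that $d = \sum_j (-1)^j d_j$ is assembled from the faces of the simplicial $k$-module structure of the earlier theorem (Propositions~\ref{sugo} and \ref{farro}), each face being a composite of a single partial decomposition $\Delta_{p,q,i}$ with the distinguished elements of $\cc(2)^*$, $\cc(1)^*$ and $\cc(0)^*$ furnished by the comultiplication. The key input is a Leibniz-type identity dual to Gerstenhaber's
\[
\delta(f\circ g) \;=\; \delta f\circ g \;\pm\; f\circ \delta g \;\pm\; \bigl(f\smile g \;\mp\; g\smile f\bigr),
\]
namely that $[-]\circ d$ differs from $(d\otimes\id \pm \id\otimes d)\circ[-]$ only by correction terms built from the cup coproduct $\cup^c$ of the earlier theorem. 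Crucially, these corrections occur, up to sign, as $\tau$-symmetric combinations of $\cup^c$, so that applying the antisymmetriser $\id - \tau$ annihilates them outright; what survives is exactly $\{-\}\circ d = (d\otimes\id + \id\otimes d)\circ\{-\}$ on $\cc_\bullet[1]$, i.e.\ $d$ is a coderivation for the cobracket. Together with (a) and (b), this proves the theorem.

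The main obstacle is the sign bookkeeping in Step (c): one must unfold the faces in terms of the $\Delta_{p,q,i}$, match the simplicial signs against the signs $(-1)^{(q-1)(p-i)}$ built into $[-]_n$, and verify that the $\cup^c$-corrections are genuinely killed by $\id-\tau$ and not merely up to a homotopy — this is the dual of the most delicate point in Gerstenhaber's original computation. A cleaner but less self-contained alternative avoids it altogether: since the earlier discussion shows that $\cc^*$ is an operad with multiplication, the classical theorem equips $\cc^*_\bullet[1]$ with a dg Lie algebra structure, which one transports back along the canonical maps $\cc(p)\otimes\cc(q)\to\bigl(\cc(p)^*\otimes\cc(q)^*\bigr)^*$; as every identity in play is an equality of explicit maps out of $\cc$, the failure of linear duality to commute with $\otimes$ causes no difficulty here.
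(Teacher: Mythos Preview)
Your handling of (a) and (b) matches the paper: coantisymmetry is immediate, and the co-Jacobi identity follows from Proposition~\ref{lunapop} together with Proposition~\ref{pesce}.

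For (c), however, the paper takes a genuinely different and shorter route. The key observation you are missing is Eq.~\eqref{olio}: the differential itself can be written as $d = \overline{\mu}^c \circ \{-\}$, i.e.\ as the cobracket followed by evaluation against the comultiplication element. Once this is in hand, the coderivation identity
\[
\{-\}\circ d \;=\; \big(d\otimes \id + \rho\circ(d\otimes\id)\circ\rho\big)\circ\{-\}
\]
becomes a purely formal consequence of the co-Jacobi identity for $\{-\}$ and the relation \eqref{sinistradestra}. No cup coproduct, no Gerstenhaber-type correction terms, and essentially no sign bookkeeping are needed: both sides are expressions of the form $(\overline{\mu}^c\otimes\id^{\otimes 2})$ applied to the three terms of the co-Jacobiator, and these sum to zero.

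Your proposed route via $\cup^c$-corrections is the direct dualisation of Gerstenhaber's original argument and can be made to work, but your description of the cancellation mechanism is not quite right. The correction to $[-]\circ d - (d\otimes\id + \id\otimes d)\circ[-]$ is not a $\tau$-\emph{symmetric} expression annihilated by $\id-\tau$; rather (cf.\ Proposition~\ref{farro}) it is of the form $\cup^c_{\coop}-\cup^c = (\tau-\id)\circ\cup^c$, which is $\tau$-\emph{anti}symmetric. The actual cancellation, as in Gerstenhaber's paper, comes from matching the correction produced by $[-]$ against the one produced by $-\rho\circ[-]$ after a careful comparison of the flips $\tau$ and $\rho$; this works, but it is exactly the ``delicate sign bookkeeping'' you flag, and it is not bypassed by the symmetry claim you state. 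The paper's approach sidesteps this entirely.

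Your alternative via dualising the dg Lie algebra on $\cc^*$ is also viable in principle, but the paper does not use it; and as you note, making it rigorous requires some care with the canonical maps $\cc(p)\otimes\cc(q)\to(\cc(p)^*\otimes\cc(q)^*)^*$.
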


The final objective is to harmonise these two structures in the sense of a {\em Gerstenhaber coalgebra} that dualises the notion of a Gerstenhaber algebra: a graded cocommutative co\-product together with a coantisymmetric cobracket such that a sort of Leibniz coderivation rule holds. More precisely:

\begin{deff*}
  A {\em Gerstenhaber coalgebra} is a graded
  cocommutative $k$-coalgebra $\big(\! \bigoplus_{n \in \N} V_n, \cup^c\big)$
endowed with a graded Lie cobracket $\{-\} \colon V_n \to \bigoplus_{p+q=n+1} V_p \otimes V_q$ of degree $+1$ such that the {\em coLeibniz} identity
\begin{equation*}
(\id \otimes \cup^c) \circ \{-\} = \big(\{-\} \otimes \id + (\gvr \otimes \id) \circ (\id \otimes \{-\})\big) \circ \cup^c
  \end{equation*}
holds, where $\gvr$ denotes the mixed tensor flip as mentioned below.
\end{deff*}

Combining the two constructions $\cup^c$ and $\{-\}$ of coproduct and cobracket from the preceding theorems yields, as expected and as is dually the case in the operadic context, a Leibniz compatibility only up to homotopic terms (more precisely, the coopposite cobrace {\em is} a Leibniz coderivation compatible even on chains, while the actual cobrace is so only up to homotopy). This allows us to prove our main result in Theorem \ref{asparagi}:

\begin{theorem*}
  The homology groups arising from a cooperad with comultiplication constitute a Gerstenhaber coalgebra, that is, they are endowed with a graded cocommutative coproduct and a graded coantisymmetric cobracket that are compatible in a (co)Leibniz way.
  \end{theorem*}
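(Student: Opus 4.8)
The plan is to reduce the statement to a chain-level identity together with an explicit chain homotopy. By the two preceding theorems we already know that $H(\cC_\bullet,d)$ is a graded cocommutative counital coalgebra under the cup coproduct $\cup^c$ and that the suspension $\cC_\bullet[1]$ is a dg Lie coalgebra under the graded cocommutator $\{-\}$ assembled from the cobraces $[-]_n$; in particular both $\cup^c$ and $\{-\}$ descend to $H(\cC_\bullet,d)$. What remains is to verify the coLeibniz compatibility
\[
(\id\otimes\cup^c)\circ\{-\} = \big(\{-\}\otimes\id + (\gvr\otimes\id)\circ(\id\otimes\{-\})\big)\circ\cup^c
\]
as an identity of maps on homology.

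First I would treat the \emph{coopposite} cobrace $[-]^{\op}$, namely the totalisation of the partial decompositions $\Delta_{p,q,i}$ read with the two tensor factors exchanged, and show that already on chains it is an honest coLeibniz coderivation for $\cup^c$, with no correction terms. This is the exact dual of the classical fact that in an operad with multiplication the operation $x\{y\}$ is a derivation for the cup product; concretely, expanding both sides using the cooperadic coassociativity relations of Definition \eqref{carciofo} together with the defining relations of the comultiplication, one matches the two families of nested degraftings term by term. The sign $(-1)^{(q-1)(p-i)}$ in the definition of $[-]_n$ is precisely what makes this matching go through, so this step is bookkeeping rather than conceptually hard.

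Next I would pass from the coopposite cobrace to the actual one. The discrepancy between $[-]$ and $[-]^{\op}$ is measured by the mixed flip $\gvr$, and the failure of $\cup^c$ to be strictly cocommutative is, by Propositions \ref{sugo} and \ref{zenzero}, witnessed by an explicit homotopy built from $d$. Feeding this homotopy into the cobrace in the appropriate tensor slot produces an explicit map $h$, assembled from the cocommutativity homotopy, the operations $\Delta_{p,q,i}$, and the comultiplication elements in $\cC(2)^*$, $\cC(1)^*$, $\cC(0)^*$, such that the difference of the two sides of the coLeibniz identity for $\{-\}$ equals $d\circ h \pm h\circ d$ on chains. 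Passing to homology annihilates this term, so the coLeibniz identity holds on $H(\cC_\bullet,d)$; together with the graded cocommutative coalgebra and graded Lie coalgebra structures already established, this is exactly the data of a Gerstenhaber coalgebra in the sense of the Definition above, which proves Theorem \ref{asparagi}.

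The main obstacle I anticipate is the construction and sign-verification of the homotopy $h$ in the previous step. Dualising the classical homotopy formula that underlies the Gerstenhaber-algebra structure on operad cohomology means tracking a sizeable family of terms indexed by ordered pairs of degrafting positions and by the placement of the multiplication ``leaf'', and the Koszul signs coming from the degree shift $[1]$, from the flip $\gvr$, and from permuting tensor factors all interact, so that a single misplaced sign breaks the identity. A reassuring cross-check, at least when every $\cC(n)$ is finitely generated projective over $k$, is that $\cC^*$ is then an operad with multiplication and our statement specialises to the classical theorem that $H(\cC^*)$ is a Gerstenhaber algebra; but the proof proper should be the direct, finiteness-free argument sketched above.
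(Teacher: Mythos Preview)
Your proposal is essentially correct and follows the same architecture as the paper: you identify that the coopposite cobrace $[-]^{\coop}$ is an \emph{exact} coLeibniz coderivation for $\cup^c$ on chains (this is Proposition~\ref{miglio}, equation~\eqref{ciambella1}), while the cobrace $[-]$ itself satisfies the coLeibniz identity only up to an explicit homotopy (equation~\eqref{ciambella2}), and that passing to homology kills the homotopy terms.

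The one place where your plan diverges from the paper is in how you propose to manufacture the homotopy. You suggest feeding the cocommutativity homotopy for $\cup^c$ (Proposition~\ref{farro}) into the cobrace to produce $h$. The paper does not proceed this way: it writes down the homotopy $F$ directly as an explicit double-degrafting sum (equation~\eqref{pici}) and then verifies, by a long case analysis in the Appendix, that the correction terms in \eqref{ciambella2} are exactly $F\circ d$ plus the three Koszul-signed copies of $d$ acting on each tensor slot of $F$. Your heuristic that the cocommutativity homotopy should control the discrepancy is morally right---both homotopies live in the same space of ``double braces''---but it is not clear that the coLeibniz homotopy can be read off from the cocommutativity one without essentially reconstructing $F$ by hand, so the direct definition of $F$ is what actually buys you the result. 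Your anticipated obstacle (the sign-tracking) is exactly where the work lies, and the paper's Appendix confirms this is a substantial computation.
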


Examples include a comultiplicative cooperadic structure on the chain complex computing $\Tor^B_\bull(k,k)$ for a $k$-bialgebra $B$ as well as a comultiplicative cooperadic structure on the chain complex computing $\Tor^\bull_\Ae(A,A)$ for an associative $k$-algebra $A$ if some extra structure is present, such as $A$ being Frobenius. To sum up, we have in Propositions \ref{fave} \& \ref{limone}:
\begin{theorem*} \
  \begin{enumerate}
    \compactlist{99}
    \item
  If $B$ is a
  ($k$-projective) bialgebra, then the homology groups $\Tor_\bull^B(k,k)$
  define a Gerstenhaber coalgebra, that is, are equipped with a graded cocommutative coproduct that is a coderivation compatible with a graded coantisymmetric cobracket.
\item
  If $A$ is a ($k$-projective) Frobenius algebra with Nakayama automorphism $\gs$, then the homology groups $\Tor_\bull^\Ae({}_\gs A,A)$ define a Gerstenhaber coalgebra as above.
\end{enumerate}
  \end{theorem*}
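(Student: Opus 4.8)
The plan is to deduce both statements from the main Theorem~\ref{asparagi}: in each case one produces an explicit cooperad with comultiplication whose associated simplicial $k$-module $(\cC_\bull, d_\bull, s_\bull)$ is the standard complex computing the Tor groups in question, and then invokes Theorem~\ref{asparagi} directly. The cooperads involved are the ``upside-down'' $k$-linear analogues of the classical operads with multiplication that make $\Ext_B(k,k)$, resp.\ Hochschild cohomology $HH^\bull(A)$, into Gerstenhaber algebras; working directly on the (co)chain complexes rather than dualising infinite-dimensional objects is what forces the $k$-projectivity hypotheses, which are exactly what guarantees that these complexes compute $\Tor$.

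For~(i), I would set $\cC(n) \coloneqq B^{\otimes n}$ and define the degrafting maps $\gD_{p,q,i}\colon B^{\otimes p+q-1}\to B^{\otimes p}\otimes B^{\otimes q}$ by comultiplying, via $\gD_B$, the $q$ tensor legs in positions $i,\dots,i+q-1$: the second Sweedler components are collected into the $q$-corolla, while the first components are multiplied together into a single factor placed in the $i$-th slot of the $p$-corolla. The cooperadic coassociativity of the family $\{\gD_{p,q,i}\}$ then reduces to coassociativity of $\gD_B$, associativity of $\mu_B$, and the bialgebra axiom linking the two. The cooperadic comultiplication, i.e.\ the morphism $\cC\to\ass^*$, I would take to be $f_n = \gve_B^{\otimes n}\colon B^{\otimes n}\to k$; the identity $f_{p+q-1} = (f_p\otimes f_q)\circ\gD_{p,q,i}$ follows from $\gve_B$ being an algebra map and from the counit axiom, and the induced distinguished elements of $\cC(2)^*$, $\cC(1)^*$, $\cC(0)^*$ are checked to make $\cC^*$ an operad with multiplication. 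Finally one verifies that the differential $d$ produced by Proposition~\ref{sugo} is, up to Koszul signs, the bar differential of $B$ with trivial coefficients --- interior faces multiply adjacent legs, outer faces apply $\gve_B$, degeneracies insert $1_B$ --- so that $H(\cC_\bull, d) = \Tor^B_\bull(k,k)$ whenever $B$ is $k$-projective. Theorem~\ref{asparagi} then delivers the Gerstenhaber coalgebra, the Hopf case being a special instance.

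For~(ii), the Frobenius form $\gl\colon A\to k$, characterised by $\gl(ab)=\gl(b\,\gs(a))$, equips $A$ with a coassociative comultiplication $\gd$ and counit $\gl$, turning it into a Frobenius coalgebra. I would set $\cC(n)\coloneqq A^{\otimes n+1}$ and build the degrafting maps $\gD_{p,q,i}$ out of $\gd$ and $\mu_A$ as the $k$-linear dual of Gerstenhaber's partial compositions on the Hochschild cochains $\Hom(A^{\otimes\bull},A)$, assembling the morphism $\cC\to\ass^*$ from $\gl$ and the unit $\eta_A$. Having verified the cooperadic axioms, one identifies the resulting simplicial $k$-module with the Hochschild chain complex $A^{\otimes\bull+1}$; the failure of the Frobenius structure to be symmetric, i.e.\ $\gs\neq\id$, is precisely what forces the diagonal bimodule in the coefficients to be replaced by ${}_\gs A$, giving $H(\cC_\bull, d) = HH_\bull(A;{}_\gs A) = \Tor^\Ae_\bull({}_\gs A, A)$ for $A$ that is $k$-projective. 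A final application of Theorem~\ref{asparagi} concludes.

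In both cases the linear algebra is routine; the genuine work, and the main obstacle, is the purely combinatorial verification that the degrafting maps satisfy the (rather involved) cooperadic coassociativity relations and that the chosen $f_\bull$ is a cooperad morphism. For~(ii) there is the additional delicate bookkeeping of threading the Nakayama automorphism consistently through every structure map, confirming that it is exactly the twist appearing in ${}_\gs A$, and matching the induced differential $d$ with the twisted Hochschild boundary with the correct signs.
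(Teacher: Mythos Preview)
Your proposal is correct and follows essentially the same approach as the paper: for each case you build an explicit cooperad with comultiplication on the standard chain complex, verify that the induced simplicial differential recovers the bar (resp.\ Hochschild) differential, and then invoke Theorem~\ref{asparagi}. The paper's cooperadic structures in \S\ref{gelato} and Corollary~\ref{ciliegie} match your descriptions exactly --- in the bialgebra case via the Sweedler-type degrafting \eqref{pandoro} and the morphism $\psi_p = \gve\circ\mu_p = \gve^{\otimes p}$, and in the Frobenius case by transport of structure along the isomorphism \eqref{polpo} followed by double dualisation, yielding the explicit decompositions \eqref{dechoch} and the comultiplication data \eqref{merluzzo}; the sign and twist bookkeeping you flag is carried out in Lemmas~\ref{sesamo} and~\ref{more}.
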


Wrapping up,
while it is well-known that $\Ext^\bull_H(k,k)$
is always a Gerstenhaber {\em algebra} and, as just seen, $\Tor_\bull^H(k,k)$ is also always a Gerstenhaber {\em coalgebra}, in Hochschild theory the situation is not entirely symmetric: here again, $\Ext^\bull_\Ae(A,A)$ is  a Gerstenhaber algebra without exception, but $\Tor_\bull^\Ae(A,A)$ is {\em not} a Gerstenhaber coalgebra in all cases: for this one needs more structure as, for example, a Frobenius kind of property, or possibly a derived version of it such as (twisted) Calabi-Yau. This finds its simple explanation in the fact that Hopf algebra cohomology (with trivial coefficients) is precisely the linear dual of Hopf algebra homology, while this is not so in Hochschild theory, the dual theory being given by
$\Tor_\bull^\Ae(A^*,A)$.

Furthermore, 
as Hochschild theory is an example of {\em bialgebroid} theory as a generalisation of bialgebras to noncommutative base rings, from this observation one immediately deduces that the situation can neither be symmetric for bialgebroid cohomology and homology, that is, while $\Ext^\bull_U(A,A)$ for a (left) bialgebroid $(U,A)$ is again always a Gerstenhaber algebra, $\Tor_\bull^U(A,A)$, if defined, in general will not be a Gerstenhaber coalgebra.

\addtocontents{toc}{\SkipTocEntry}
\subsection*{Notation and conventions}
Throughout the text, $k$ denotes a commutative ring or a field, of characteristic zero if need be. As customary, unadorned tensor products or unadorned Homs are meant to be over $k$, and $\kmod$ denotes the respective monoidal category of $k$-modules, whereas $(-)^* = \Hom_k(-, k)$ the respective duality functor.
  For two (differential) graded $k$-modules $A$ and $B$, let us introduce four kinds of tensor flips $A \otimes B \to B \otimes A$, differing only by their sign:
  let  $|a|=n$ for $a\in A_n$, and set
  \begin{equation}
    \label{piadina}
\begin{array}{rcl}
  \gs(a \otimes b) &\coloneqq& b \otimes a,
  \\
  \tau(a \otimes b) &\coloneqq& (-1)^{|a|\cdot |b|} b \otimes a,
\\
  \rho(a\otimes b) &\coloneqq& (-1)^{(|a|-1)\cdot(|b|-1)} b \otimes a, 
\\
  \gvr(a\otimes b) &\coloneqq& (-1)^{|a|\cdot(|b|-1)} b \otimes a, 
\end{array}
  \end{equation}
  that is, the {\em ungraded} flip, the {\em graded} flip, the {\em (de)suspended} flip, as well as the {\em mixed flip}.

\section{Cooperads with comultiplication}

In this preliminary section, we briefly discuss the notion of operads and, in particular, of cooperads; see, for example, \cite[\S5.8]{LodVal:AO} or \cite{VdL:OHAACKD, MarShnSta:OIATAP} for an intrinsic definition and more details on cooperads.
However, as is the case in the theory of operads, at times the definition of a cooperad by means of {\em partial} decompositions (or partial {\em co-operations}) turns out to be more handy when it comes to explicit computations.
%
%
Such a ``partial'' definition seems to have appeared only recently in \cite[Def.~1.26]{Roc:COP} in a slightly different formulation,
which is why we include this in quite some detail here, see Definition \ref{carciofo}, and illustrate what is going on by a few diagrams, to conclude by the observation that a cooperad {\em with comultiplication} amounts to a multiplicative structure on the $k$-linear dual of the cooperad, see Remark \ref{pane}.

\begin{deff} Let $\mathbb{N}$ be the discrete category whose objects are the natural numbers. A \emph{$k$-linear $\mathbb{N}$-module} is a functor
  $$
  X \colon \mathbb{N} \rightarrow \kmod.
  $$
  We denote by $\mathbb{N}$-$\mathsf{mod}_k$ the category of {\em linear $\mathbb{N}$-modules} (or  {\em $\N$-graded $k$-modules}) where morphisms are natural transformations.
\end{deff}
There are two monoidal structures on $\mathbb{N}$-$\mathsf{mod}_k$. The first one, which we denote by $\boxtimes$, is the so-called {\em Day convolution} with respect to the sum of natural numbers and the tensor product of $k$-modules: for $X,Y \in \mathbb{N}\mbox{-}\mathsf{mod}_k$, their Day convolution $X\boxtimes Y$ is defined as
$$
(X\boxtimes Y) (n) := \bigoplus_{l+h=n} X(l)\otimes Y(h).
$$
The unit is given by $k$ concentrated in arity $0$. This monoidal structure is symmetric. 

The second monoidal structure is called the \emph{composition product}, and denoted by $\circ$.
For $X,Y \in \mathbb{N}\mbox{-}\mathsf{mod}_k$, we have
$$ (X\circ Y)(n)=\bigoplus_{h\geq 0}X(h)\otimes Y^{\boxtimes h}(n).
$$
The unit, denoted by $I$, is given by $k$ concentrated in arity $1$. Observe that the composition product is not symmetric.

\begin{deff}
\label{fragole}
  A \emph{planar unital operad} in $\kmod$ (which we just call \emph{operad}) is a monoid $(\mathcal{P},\gamma,\mathbbm{1})$ in the monoidal category $( \mathbb{N}\mbox{-}\mathsf{mod}_k, \circ, I)$. 
    A {\em morphism of operads} is a morphism of such monoids.
\end{deff}

It is sometimes more useful to specify an operad structure on a $k$-linear $\mathbb{N}$-module $\mathcal{P}$ by providing a unit operation $\mathbbm{1}_\mathcal{P}\colon k \to \mathcal{P}(1)$ and {\em partial composition} maps
$$ \circ_i\colon \mathcal{P}(n) \otimes \mathcal{P}(m) \rightarrow \mathcal{P}(n+m-1),
$$
for any $n,m \in \mathbb{N}, n>0$ and any $1 \leq i \leq n$, which have to satisfy some associativity relations (see, for example, \cite[\S5.3.7]{LodVal:AO}) and unitality relations, namely  $\mathbbm{1}_\mathcal{P} \circ_1 p = p$ and $p\circ_i \mathbbm{1}_\mathcal{P} = p$ for any $p\in \mathcal{P}(n)$ and $1 \leq i \leq n$.
In particular, given a monoid $(\mathcal{P},\gamma,\mathbbm{1}),$ the $i^{\text{th}}$ partial composition $\circ_i$ relative to $n$ and $m$ as above is obtained by placing the unit in all but the first and the $i^{\text{th}}+1$ position, namely as:
$$
\circ_i\coloneqq  \gamma(-, \mathbbm{1}_\mathcal{P},\dots, \mathbbm{1}_\mathcal{P}, -, \mathbbm{1}_\mathcal{P}, \dots, \mathbbm{1}_\mathcal{P})\colon \mathcal{P}(n)\otimes k \otimes \dots \otimes k \otimes \mathcal{P}(m) \otimes k \otimes \dots \otimes k \rightarrow \mathcal{P}(n+m-1).
$$

\begin{deff}
  The \emph{associative operad} $\ass$ is defined as the planar unital operad whose space of $n$-ary operations is the free $k$-module
  on one generator $ \nu_n$,
  where
  the partial compositions are defined as
    \begin{eqnarray*}
      \circ_i \colon\ass(n)\otimes \ass(m) & \rightarrow & \ass(n+m-1),
      \\
   \nu_n\otimes \nu_m &\mapsto& \nu_{n+m-1},
    \end{eqnarray*}
and where the unit $\mathbbm{1}_\ass$ sends $1_k \in k$ to $\nu_1$.
\end{deff}

\begin{deff}
    An \emph{operad with multiplication} is a unital operad $\oo$ together with a morphism of unital operads $\phi\colon \ass\to \oo $.
\end{deff}

\begin{rmk}
  The data of an operad with multiplication $(\oo,\phi)$ is equivalent to the choice of a binary multiplication $\mu\in \oo(2)$ and of a nullary operation $e\in \oo(0)$ such that
  \begin{align*}
    & \text{$\mu$ is associative:} & &\mu\circ_1\mu=\mu \circ_2 \mu,
    \\
       & \text{$\mu$ is unital with respect to $e$:} & &\mu\circ_1 e = \mu\circ_2 e= \mathbbm{1}_\oo,
    \end{align*} where $\mathbbm{1}_\oo\colon k \to \oo(1)$ is identified with $\mathbbm{1}_\oo(1)\in \oo(1)$.
\end{rmk}

Our main focus will be on a notion dual to Definition \ref{fragole}, that is, considering comonoidal structures rather than monoidal ones:

\begin{deff}
  A \emph{planar counital cooperad} in $\kmod$ (which we just call \emph{cooperad}) is a comonoid $(\mathcal{C},\Delta, \epsilon)$ in the monoidal category $(\mathbb{N}\mbox{-}\mathsf{mod}_k, \circ, I)$. 
\end{deff}

As said at the beginning of this section, for our explicit computations it turns out to be more convenient to use the following definition by {\em partial} decompositions:

\begin{deff}
  \label{carciofo}
  A {\em planar counital cooperad} (in $\kmod$) is a sequence $\cC = \{ \cC(n) \}_{n \in \N}$ of $k$-modules together with $k$-linear (co)operations
   \begin{eqnarray*}
     \epsilon\colon \cC(1) &\rightarrow& k,
     \\
\Delta_{p,q,i}\colon \cc(p+q-1) &\rightarrow& \cc(p)\otimes \cc(q),
\end{eqnarray*}
for any $p,q \in \mathbb{N}, p>0$ 
and
$1 \leq i \leq p$, declared to be zero if $p=0$, that describes 
the $i^{\text{th}}$ partial decomposition of an element $x \in \cc(p+q-1)$ as arising out of the {\em degrafting}
of a $q$-corolla from the $i^{\text{th}}$ leaf of a $p$-corolla, subject to the following relations:
\begin{eqnarray*}
(\epsilon\otimes \id)\circ \Delta_{1,n,1} &=& \id_{\cC(n)},
\\
  (\id\otimes \epsilon)\circ \Delta_{n,1,i} &=& \id_{\cC(n)},
  \end{eqnarray*}
for all $1\leq i \leq n$, as well as
\begin{equation}
  \label{orzo}
  (\Delta_{p,q, i} \otimes \id) \circ \Delta_{p+q-1,r,j}  =
\begin{cases}
  (\id \otimes \gs) \circ (\Delta_{p,r, j-q+1} \otimes \id)\circ \Delta_{p+r-1,q, i} &  \text{if} \ i \leq j-q, 
  \\ 
  (\id\otimes \Delta_{q,r,j-i+1})\circ \Delta_{p,q+r-1,i} &  \text{if} \
j-q < i \leq j,
  \\
  (\id \otimes \gs) \circ (\Delta_{p,r, j} \otimes \id)\circ \Delta_{p+r-1,q, i+r-1} &  \text{if} \
j < i \leq p,
    \end{cases}
\end{equation}
for $p,q,r \in \N$ as well as $ 1 \leq i \leq p$ and $1 \leq j \leq p+q-1$, which we will refer to as {\em coassociativity} relations.
\end{deff}

\begin{rmk}
  In case $p = 0$, by mere definition all identities reduce to zero, since we set by default $\Delta_{0,q,i}\equiv 0$. In case $q=0$ or $r=0$, the identities \eqref{orzo} have to be read as follows: for $r=0$, the
  coassociativity relations simply come out as
\begin{equation}
  \label{orzo2}
  (\Delta_{p,q, i} \otimes \id) \circ \Delta_{p+q-1,0,j} \ \, = \ \,
 \begin{cases}
  (\id \otimes \gs) \circ (\Delta_{p,0, j-q+1} \otimes \id)\circ \Delta_{p-1,q, i} &  \text{if} \ i \leq j-q,
  \\ 
  (\id\otimes \Delta_{q,0,j-i+1})\circ \Delta_{p,q-1,i} &  \text{if} \
j-q  < i \leq j,
  \\
  (\id \otimes \gs) \circ (\Delta_{p,0, j} \otimes \id)\circ \Delta_{p-1,q, i-1} &  \text{if} \ j < i \leq p.
    \end{cases}
\end{equation}
Reading this from right to left (and calling $q$ by $r$), this also contains the case $q=0$ in \eqref{orzo}, which when turning to the left-right reading direction has to be split into two equations:
\begin{equation}
  \begin{array}{rcl}
  \label{orzo3}
  (\Delta_{p,0, i} \otimes \id) \circ \Delta_{p-1,r,j}  &=&
\begin{cases}
  (\id \otimes \gs) \circ (\Delta_{p,r, j+1} \otimes \id)\circ \Delta_{p+r-1, 0, i} &  \text{if} \ i \leq j < p,
  \\
  (\id \otimes \gs) \circ (\Delta_{p,r, j} \otimes \id)\circ \Delta_{p+r-1,0, i+r-1} &  \text{if} \ j < i \leq p,
\end{cases}
\\[4mm]
 (\id \otimes \Delta_{r, 0, i}) \circ \Delta_{p,r-1,j} 
&=&  (\Delta_{p,r, j} \otimes \id) \circ \Delta_{p+r-1,0,i+j-1}, \qquad 1 \leq j \leq p, \  1 \leq i \leq r.
\end{array}
\end{equation}
Finally, if both $q=r=0$, we ignore the last equation in \eqref{orzo3}, and obtain:
\begin{equation}
  \label{orzo4}
  (\Delta_{p,0, i} \otimes \id) \circ \Delta_{p-1,0,j}  \ \, = \ \, 
\begin{cases}
  (\id \otimes \gs) \circ (\Delta_{p,0, j+1} \otimes \id)\circ \Delta_{p-1,0, i} &  \text{if} \ i \leq j < p,
  \\
  (\id \otimes \gs) \circ (\Delta_{p,0, j} \otimes \id)\circ \Delta_{p-1,0, i-1} &  \text{if} \ j < i \leq p,
    \end{cases}
\end{equation}
and observe that this is actually one line only.
  \end{rmk}

\pagebreak

\begin{example}
Let us illustrate the third coassociativity law in \eqref{orzo} for a specific case:

\vspace*{-.3cm}
  
  \begin{center}
\begin{tikzpicture}[scale=.85, yscale=1.15, line width=0.7pt] 



 \node(A) at (-6, -2) {};
\node(a) at (-6, -3) {$\bullet$};
\node(1) at (-7.25, -4) {};
\node(2) at (-6.75, -4) {};
\node(3) at (-6.25, -4) {};
\node(4) at (-5.75, -4) {};
\node(5) at (-5.25, -4) {};
\node(6) at (-4.75, -4) {};

\draw(A)--(a);
\draw(a)--(1);
\draw(a)--(2);
\draw(a)--(3);
\draw(a)--(4);
\draw(a)--(5);
\draw(a)--(6);

\node(x1) at (-5.25, -2.5) {};
\node(x2) at (-4, -1.75) {};
\draw[|->, thick](x1)--(x2);


\node() at (-4.95, -1.75) {$\gD_{4,3,2}$};

\node(y1) at (-5.25, -4.25) {};
\node(y2) at (-4, -5) {};
\draw[|->, thick](y1)--(y2);


\node() at (-4.95, -5) {$\gD_{5,2,5}$};

\node(B) at (-3, 0) {};
\node(b) at (-3, -1) {$\bullet$};

\node(7) at (-3.75, -2) {};
\node(8) at (-3.25, -2) {$\bullet$};
\node(9) at (-2.75, -2) {};
\node(10) at (-2.25, -2) {};

\draw(B)--(b);
\draw(b)--(7);
\draw(b)--(8);
\draw(b)--(9);
\draw(b)--(10);

\node(8a) at (-3.75, -3) {};
\node(8b) at (-3.25, -3) {};
\node(8c) at (-2.75, -3) {};

\draw(8)--(8a);
\draw(8)--(8b);
\draw(8)--(8c);

\node(x3) at (-2, -1) {$=$};

\node(C) at (-1, 0) {};
\node(c) at (-1, -1) {$\bullet$};

\node(11) at (-1.75, -2) {};
\node(12) at (-1.25, -2) {};
\node(13) at (-.75, -2) {};
\node(14) at (-0.25, -2) {};

\draw(C)--(c);
\draw(c)--(11);
\draw(c)--(12);
\draw(c)--(13);
\draw(c)--(14);

\node(x3) at (-0.25, -1) {$\otimes$};

\node(D) at (.5, 0) {};
\node(d) at (.5, -1) {$\bullet$};

\node(15) at (0.0, -2) {};
\node(16) at (0.5, -2) {};
\node(17) at (1.0, -2) {};

\draw(D)--(d);
\draw(d)--(15);
\draw(d)--(16);
\draw(d)--(17);

\node(x4) at (1.15, -1) {};
\node(x5) at (2.4, -1) {};
\draw[|->, thick](x4)--(x5)  node[above,midway] {$\gD_{3,2,3} \otimes \id$};

\node(E) at (3.25, 0) {};
\node(e) at (3.25, -1) {$\bullet$};

\node(18) at (2.75, -2) {};
\node(19) at (3.25, -2) {};
\node(20) at (3.75, -2) {$\bullet$};

\draw(E)--(e);
\draw(e)--(18);
\draw(e)--(19);
\draw(e)--(20);

\node(20a) at (3.25, -3) {};
\node(20b) at (4.25, -3) {};

\draw(20)--(20a);
\draw(20)--(20b);

\node(x6) at (4.125, -1) {$\otimes$};

\node(F) at (5, 0) {};
\node(f) at (5, -1) {$\bullet$};

\node(21) at (4.5, -2) {};
\node(22) at (5.0, -2) {};
\node(23) at (5.5, -2) {};

\draw(F)--(f);
\draw(f)--(21);
\draw(f)--(22);
\draw(f)--(23);

\node(x7) at (5.8375, -1) {$=$};

\node(G) at (6.75, 0) {};
\node(g) at (6.75, -1) {$\bullet$};

\node(24) at (6.25, -2) {};
\node(25) at (6.75, -2) {};
\node(26) at (7.25, -2) {};

\draw(G)--(g);
\draw(g)--(24);
\draw(g)--(25);
\draw(g)--(26);

\node(x8) at (7.5, -1) {$\otimes$};

\node(H) at (8.25, 0) {};
\node(h) at (8.25, -1) {$\bullet$};

\node(27) at (7.75, -2) {};
\node(28) at (8.75, -2) {};

\draw(H)--(h);
\draw(h)--(27);
\draw(h)--(28);

\node(x9) at (9, -1) {$\otimes$};

\node(I) at (9.75, 0) {};
\node(i) at (9.75, -1) {$\bullet$};

\node(29) at (9.25, -2) {};
\node(30) at (9.75, -2) {};
\node(31) at (10.25, -2) {};

\draw(I)--(i);
\draw(i)--(29);
\draw(i)--(30);
\draw(i)--(31);

\node(x10) at (8.25, -2.25) {};
\node(y10) at (8.25, -4.25) {};
\draw[|->, thick](x10)--(y10)  node[right,midway] {$\id \otimes \gs$};


\node(J) at (-3.75, -4.5) {};
\node(j) at (-3.75, -5.5) {$\bullet$};

 \node(32) at (-4.75, -6.5) {};
 \node(33) at (-4.25, -6.5) {};
 \node(34) at (-3.75, -6.5) {};
 \node(35) at (-3.25, -6.5) {};
 \node(36) at (-2.75, -6.5) {$\bullet$};

\draw(J)--(j);
\draw(j)--(32);
\draw(j)--(33);
\draw(j)--(34);
\draw(j)--(35);
\draw(j)--(36);

\node(36a) at (-3.25, -7.5) {};
\node(36b) at (-2.25, -7.5) {};

\draw(36)--(36a);
\draw(36)--(36b);

\node(y1) at (-2.25, -5.5) {$=$};

\node(K) at (-1, -4.5) {};
\node(k) at (-1, -5.5) {$\bullet$};

 \node(37) at (-2, -6.5) {};
 \node(38) at (-1.5, -6.5) {};
 \node(39) at (-1, -6.5) {};
 \node(40) at (-.5, -6.5) {};
 \node(41) at (0, -6.5) {};

\draw(K)--(k);
\draw(k)--(37);
\draw(k)--(38);
\draw(k)--(39);
\draw(k)--(40);
\draw(k)--(41);

\node(y2) at (0, -5.5) {$\otimes$};

\node(L) at (.75, -4.5) {};
\node(l) at (.75, -5.5) {$\bullet$};

 \node(42) at (.25, -6.5) {};
 \node(43) at (1.25, -6.5) {};

\draw(L)--(l);
\draw(l)--(42);
\draw(l)--(43);

\node(y3) at (1.35, -5.5) {};
\node(y4) at (2.6, -5.5) {};
\draw[|->, thick](y3)--(y4)  node[above,midway] {$\gD_{3,3,2} \otimes \id$};

\node(M) at (3.25, -4.5) {};
\node(m) at (3.25, -5.5) {$\bullet$};

 \node(44) at (2.75, -6.5) {};
 \node(45) at (3.25, -6.5) {$\bullet$};
 \node(46) at (3.75, -6.5) {};

\draw(M)--(m);
\draw(m)--(44);
\draw(m)--(45);
\draw(m)--(46);

 \node(45a) at (2.75, -7.5) {};
 \node(45b) at (3.25, -7.5) {};
 \node(45c) at (3.75, -7.5) {};

 \draw(45)--(45a);
\draw(45)--(45b);
\draw(45)--(45c);

\node(y5) at (4.125, -5.5) {$\otimes$};

\node(N) at (5, -4.5) {};
\node(n) at (5, -5.5) {$\bullet$};

\node(47) at (4.5, -6.5) {};
\node(48) at (5.5, -6.5) {};

\draw(N)--(n);
\draw(n)--(47);
\draw(n)--(48);

\node(y6) at (5.8375, -5.5) {$=$};

\node(O) at (6.75, -4.5) {};
\node(o) at (6.75, -5.5) {$\bullet$};

\node(49) at (6.25, -6.5) {};
\node(50) at (6.75, -6.5) {};
\node(51) at (7.25, -6.5) {};

\draw(O)--(o);
\draw(o)--(49);
\draw(o)--(50);
\draw(o)--(51);

\node(y7) at (7.5, -5.5) {$\otimes$};

\node(P) at (8.25, -4.5) {};
\node(p) at (8.25, -5.5) {$\bullet$};

\node(52) at (7.75, -6.5) {};
\node(53) at (8.25, -6.5) {};
\node(54) at (8.75, -6.5) {};

\draw(P)--(p);
\draw(p)--(52);
\draw(p)--(53);
\draw(p)--(54);

\node(y8) at (9, -5.5) {$\otimes$};

\node(Q) at (9.75, -4.5) {};
\node(q) at (9.75, -5.5) {$\bullet$};

\node(55) at (9.25, -6.5) {};
\node(56) at (10.25, -6.5) {};

\draw(Q)--(q);
\draw(q)--(55);
\draw(q)--(56);

 \end{tikzpicture}

\vspace*{-.3cm}

\captionof{figure}{The relation $(\gD_{3,2,3} \otimes \id) \circ \gD_{4,3,2} = (\id \otimes \gs) \circ (\gD_{3,3,2} \otimes \id) \circ \gD_{5,2,5}$.}
\label{rel}
 \end{center}

   \end{example}

\begin{deff}
    A \emph{cooperad with comultiplication} is a counital cooperad $\cc$ together with a morphism of counital cooperads $\psi\colon \cc \to \ass^*$.
\end{deff}

\begin{rmk}
  \label{pane}
The linear dual $\cC^*$ of a cooperad $\cC$ is automatically an operad. Its identity element consists of a linear map $\mathbbm{1}_{\cC^*}\colon k\to \cC^*(1)$, and identifying $\mathbbm{1}_{\cC^*}$ with the functional $\mathbbm{1}_{\cC^*}(1)$, we see that $\mathbbm{1}_{\cC^*}= \mathbbm{1}^c \in {\cC^*}(1)$. Since moreover $\ass^{**}\simeq \ass$, canonically once we fix the generating operations in each arity, if $(\cc,\psi)$ is a cooperad with comultiplication, then $(\cc^*,\psi^*)$ is an operad with multiplication, where the distinguished multiplication is $\mu^c = \psi^*_2(\nu_2) \in \cc^*(2)$ and the distinguished unit is $e^c= \psi^*_0(\nu_0) \in \cc^*(0)$.
Hence, a cooperad with comultiplication $(\cC, \psi)$ can be equivalently described by a quadruple $(\cC, \mu^c, \mathbbm{1}^c, e^c)$ as in Figure \ref{spe} right below.


\begin{center}
 \begin{tikzpicture}[scale=.75, yscale=.95, line width=0.7pt]
  \node(comult) at (-1,.5){$\mu^c \colon \cC(2) \to k$,};
 \node(1) at (-2,0){};
 \node(2) at (0,0){};
 \node(b) at (-1,-1){$\mu^c$};
\node(3) at (-1,-2){};
 
 \draw(b)--(2);
 \draw(b)--(1);
 \draw[[-](3)--(b);
 
 \node(coid) at (3,.5){$\mathbbm{1}^c \colon \cC(1) \to k$,};
 \node(c) at (3,0){};
 \node(5) at (3,-1){$\mathbbm{1}^c$};
 \node(6) at (3,-2) {};
 
 \draw(5)--(c);
 \draw[[-](6)--(5);
 
 \node(coun) at (7,.5) {$ e^c \colon \cC(0) \to k$,};
 \node(d) at (7,-1){$e^c$};
 \node(7) at (7,-2){};
 
 \draw[[-](7)--(d);
 \end{tikzpicture}

\vspace*{-.4cm}

\captionof{figure}{Special elements in a counital cooperad with comultiplication.}
\label{spe}

\end{center}

\smallskip

\noindent  Here, the output, as an element of $k$, is represented by a truncated line to distinguish it from elements in $\cC$. Then, (co)associativity of $\mu^c$ can be written as the equality,
\begin{equation}
\label{polenta}
(\mu^c\otimes \mu^c)\circ \Delta_{2,2,1}= (\mu^c\otimes \mu^c)\circ \Delta_{2,2,2},
\end{equation}
which we illustrate by

\vspace*{-.3cm}

\begin{center}
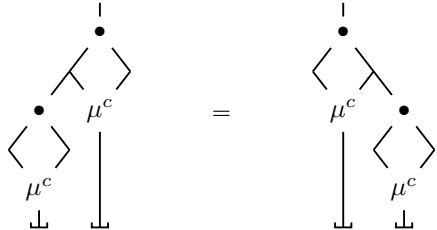

\begin{tikzpicture}[scale=.8, yscale=.65, line width=0.7pt]

\node(1) at (-2,0) {};
\node(2) at (-2,-1){$\bullet$};
\node(3) at (-2.5,-2){};
\node(4) at (-1.5,-2){};
\node(5) at (-3,-3){$\bullet$};
\node(6) at (-2,-3){$\mu^c$};
\node(7) at (-3.5,-4){};
\node(8) at (-2.5,-4){};
\node(9) at (-3,-5){$\mu^c$};
\node(10) at (-3,-6.25){};
\node(11) at (-2,-6.25){};

\draw(1)--(2);
\draw(3.center)--(2);
\draw(4.center)--(2);
\draw(3.center)--(6);
\draw(4.center)--(6);
\draw(3.center)--(5);
\draw(7.center)--(5);
\draw(8.center)--(5);
\draw(7.center)--(9);
\draw(8.center)--(9);
\draw[[-](10)--(9);
\draw[[-](11)--(6);

\node(uguale) at (0,-3.125){$=$};

\begin{scope}[yscale=1,xscale=-1]


  \node(1) at (-2,0) {};
\node(2) at (-2,-1){$\bullet$};
\node(3) at (-2.5,-2){};
\node(4) at (-1.5,-2){};
\node(5) at (-3,-3){$\bullet$};
\node(6) at (-2,-3){$\mu^c$};
\node(7) at (-3.5,-4){};
\node(8) at (-2.5,-4){};
\node(9) at (-3,-5){$\mu^c$};
\node(10) at (-3,-6.25){};
\node(11) at (-2,-6.25){};

\draw(1)--(2);
\draw(3.center)--(2);
\draw(4.center)--(2);
\draw(3.center)--(6);
\draw(4.center)--(6);
\draw(3.center)--(5);
\draw(7.center)--(5);
\draw(8.center)--(5);
\draw(7.center)--(9);
\draw(8.center)--(9);
\draw[[-](10)--(9);
\draw[[-](11)--(6);

\end{scope}
\end{tikzpicture}

\vspace*{-.4cm}

\nopagebreak

\captionof{figure}{Coassociativity in a cooperad with comultiplication.}
\label{coa}
\end{center}

\noindent In contrast, (co)unitality of $\mu^c$ with respect to $e^c$ corresponds to the following equations:
\begin{equation}\label{polentaunita}
(\mu^c\otimes e^c)\circ \Delta_{2,0,1}= (\mu^c\otimes e^c)\circ \Delta_{2,0,2}= \mathbbm{1}^c,
\end{equation}
which can be represented graphically as:
\begin{center}
\begin{tikzpicture}[scale=.7, yscale=.7, line width=0.7pt]

\node(1) at (-2,0) {};
\node(2) at (-2,-1){$\bullet$};
\node(3) at (-2.5,-2){};
\node(4) at (-1.5,-2){};
\node(5) at (-3,-3.85){$e^c$};
\node(a) at (-3,-3){$\bullet$};
\node(6) at (-2,-3){$\mu^c$};
\node(7) at (-3,-5){};
\node(8) at (-2,-5){};

\draw(1)--(2);
\draw(3.center)--(2);
\draw(4.center)--(2);
\draw(3.center)--(6);
\draw(4.center)--(6);
\draw(3.center)--(a);
\draw[[-](7)--(5);
\draw[[-](8)--(6);

\node(uguale) at (0,-2.125){$=$};

\begin{scope}[yscale=1,xscale=-1]


\node(1) at (-2,0) {};
\node(2) at (-2,-1){$\bullet$};
\node(3) at (-2.5,-2){};
\node(4) at (-1.5,-2){};
\node(5) at (-3,-3.85){$e^c$};
\node(a)at(-3,-3){$\bullet$};
\node(6) at (-2,-3){$\mu^c$};
\node(7) at (-3,-5){};
\node(8) at (-2,-5){};

\draw(1)--(2);
\draw(3.center)--(2);
\draw(4.center)--(2);
\draw(3.center)--(6);
\draw(4.center)--(6);
\draw(a)--(3.center);
\draw[[-](7)--(5);
\draw[[-](8)--(6);

\end{scope}

\node(uguale) at (4.5,-2){$=$};

\node(A) at (6,0){};
\node(Ba) at (6,-1.5){$\bullet$};
\node(B) at (6,-3){$\mathbbm{1}^c$};
\node(C) at (6,-5){};

\draw(A)--(Ba);
\draw(B)--(Ba);
\draw[[-](C)--(B);

\end{tikzpicture}

\vspace*{-.2cm}

\captionof{figure}{Counitality in a cooperad with comultiplication.}
\label{cou}

\end{center}

\medskip

\noindent Both (co)associativity and (co)unitality will be of multiple use in subsequent computations.
\end{rmk}

\section{Cobraces and cobrackets}

We start this section by describing how pre-Lie and Lie coalgebras are connected, see, for example, \cite{Mic:LC} for more details on this and related aspects, and afterwards we focus our attention on the situation of cooperads.

\begin{deff}
  \label{briciole1}
  A \emph{(dg) pre-Lie coalgebra} is a chain complex $(L,d)$ together with a morphism $[-]\colon L \to L\otimes L$, which we may call \emph{cobrace}, such that:
\begin{itemize}
  \compactlist{99}
\item $d$ is a coderivation for $[-]$, meaning that
   \[
    [-]\circ d =
\big(d\otimes \id + \tau \circ (d \otimes \id) \circ \tau\big)\circ [-]
\]
in any degree of $L$;
\item the cobrace satisfies the {\em pre-coJacobi identity}  
\begin{equation}\label{precoja}
 \big([-]\otimes \id - \id \otimes [-]\big)\circ [-] = \big(\id\otimes \tau\big)\circ \big([-]\otimes \id - \id \otimes [-] \big)\circ [-],
\end{equation} which means that the coassociator of $[-]$ is symmetric in the last two variables.
\end{itemize} 
\end{deff}
\noindent Observe that, with the Koszul sign rule in mind, the coderivation property could be equally expressed as
$
    [-]\circ d =
    (d\otimes \id + \id\otimes d)\circ [-],
    $
hiding the signs which, at times, has its advantages. 
    
\begin{deff}
  \label{coliedef}
A \emph{(dg) Lie coalgebra} is a chain complex $(L,d)$ together with a map of chain complexes $\{-\}\colon L \to L\otimes L$, called the \emph{cobracket}, such that:
\begin{itemize}
  \compactlist{99}
\item
 the differential $d$ is a coderivation with respect to the cobracket $\{-\}$;
\item the cobracket is coantisymmetric, which means
  \[
  \text{Im}(\{-\})\subseteq \text{Im}(\id-\tau),
  \]
  where $\tau$ is the graded flip as in \eqref{piadina};
\item the {\em coJacobi identity}
  \[
  \big(\id+ (\id\otimes \tau)\circ (\tau\otimes \id) + (\tau\otimes \id)\circ (\id\otimes \tau)\big)
  \circ \big(\{-\}\otimes \id \big)\circ \{-\} = 0
  \]
  holds.
\end{itemize}
\end{deff}

\begin{rmk}
  \label{baguette}
One could have chosen to formulate the coJacobi identity in Definition \ref{coliedef} by using $\big(\id\otimes \{-\}\big)\circ \{-\}$  instead of $\big( \{-\}\otimes \id\big)\circ \{-\}$, that is, by asking that 
$$
\big(\id+ (\id\otimes \tau)\circ (\tau\otimes \id) + (\tau\otimes \id)\circ (\id\otimes \tau)\big)
\circ \big(\id\otimes \{-\}\big) \circ \{-\} = 0.
$$
If the ground ring has odd characteristic and the cobracket is already known to be coantisymmetric, then
thanks to the identity
\begin{equation}
  \label{sinistradestra}
(\id\otimes \{-\})\circ \{-\}= (\tau\otimes \id)\circ (\id\otimes \tau)\circ (\{-\}\otimes \id)\circ \tau \circ \{-\}
\end{equation}
the two formulations of the coJacobi identity are equivalent.
\end{rmk}

The subsequent proposition is probably well-known, but for the reader's convenience we
include a detailed proof here:

\begin{prop}
  \label{pesce}
  If $\big(L,d,[-]\big)$ is a (dg) pre-Lie coalgebra, then $\big(L,d,\{-\}\big)$, where
  \begin{equation}
    \label{arancia2}
    \{-\}\coloneqq (\id-\tau)\circ [-],
    \end{equation}
  is a (dg) Lie coalgebra.
\end{prop}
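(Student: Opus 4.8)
The statement is the differential-graded, coalgebraic dual of the elementary fact that the antisymmetrisation $[x,y]=x\cdot y-y\cdot x$ of a pre-Lie product satisfies the Jacobi identity, and the plan is to dualise that argument, checking the three requirements of Definition \ref{coliedef} in turn: coantisymmetry of $\{-\}$, that $d$ is a coderivation with respect to $\{-\}$, and the coJacobi identity. Only the last one requires any genuine computation.

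Coantisymmetry is immediate: from \eqref{arancia2} we have $\{-\}=(\id-\tau)\circ[-]$, hence $\Ima(\{-\})\subseteq\Ima(\id-\tau)$, which is exactly the inclusion demanded in Definition \ref{coliedef}. For the coderivation property I would combine two inputs: first, that $d$ is a coderivation for $[-]$, which in the sign-free Koszul formulation pointed out after Definition \ref{briciole1} reads $[-]\circ d=(d\otimes\id+\id\otimes d)\circ[-]$; and second, the routinely checked fact that the graded flip $\tau$ is a chain map for the total differential, i.e.\ $\tau\circ(d\otimes\id+\id\otimes d)=(d\otimes\id+\id\otimes d)\circ\tau$. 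Composing, $\{-\}\circ d=(\id-\tau)\circ[-]\circ d=(\id-\tau)\circ(d\otimes\id+\id\otimes d)\circ[-]=(d\otimes\id+\id\otimes d)\circ(\id-\tau)\circ[-]=(d\otimes\id+\id\otimes d)\circ\{-\}$, which is precisely the coderivation property for $\{-\}$; since $\id-\tau$ preserves total degree, there is no sign subtlety here.

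For the coJacobi identity, abbreviate $b:=[-]$ and let $A:=(b\otimes\id)\circ b-(\id\otimes b)\circ b\colon L\to L^{\otimes 3}$ be the coassociator of $b$, so that the pre-coJacobi identity \eqref{precoja} is exactly the equality $A=(\id\otimes\tau)\circ A$. Write $\zeta:=(\id\otimes\tau)\circ(\tau\otimes\id)$ for the signed cyclic permutation of the three tensor factors, so that the operator appearing in the coJacobi identity of Definition \ref{coliedef} is $J=\id+\zeta+\zeta^2$, with the key property $J\circ\zeta=J\circ\zeta^2=J$ (because $\zeta^3=\id$ even with Koszul signs). I would then expand $(\{-\}\otimes\id)\circ\{-\}$ with $\{-\}=b-\tau\circ b$ into four summands, rewrite each summand as a signed permutation applied to $(b\otimes\id)\circ b$ or to $(\id\otimes b)\circ b$ (the relevant elementary identity being $(b\otimes\id)\circ\tau=\zeta\circ(\id\otimes b)$ as maps $L^{\otimes 2}\to L^{\otimes 3}$, which holds because $b$ has degree zero), and observe that after applying $J$ and using $J\circ\zeta=J$ everything collapses to $J\circ A-J\circ(\tau\otimes\id)\circ A$. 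Finally, $(\tau\otimes\id)\circ A=(\tau\otimes\id)\circ(\id\otimes\tau)\circ A=\zeta^2\circ A$ by \eqref{precoja}, whence $J\circ(\{-\}\otimes\id)\circ\{-\}=J\circ A-J\circ\zeta^2\circ A=0$, again using $J\circ\zeta^2=J$. This is the coJacobi identity in the form of Definition \ref{coliedef}, and the proof is complete.

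The only genuine obstacle is in this last step, and it is a matter of sign bookkeeping rather than of ideas: one must carry the Koszul signs faithfully through the four-term expansion and through the rewriting of each summand in terms of $(b\otimes\id)\circ b$ and $(\id\otimes b)\circ b$, and check that the signs conspire to produce exactly the combination $J\circ A-J\circ(\tau\otimes\id)\circ A$ rather than some other one; this is the cooperadic, differential-graded shadow of the classical telescoping of the cyclic sum of a pre-Lie commutator into a difference of associators. One could instead deduce the proposition abstractly from the morphism of cooperads dual to the inclusion of the Lie operad into the pre-Lie operad given by $[x,y]\mapsto x\cdot y-y\cdot x$, but the direct computation is closer to the hands-on style of this section and disposes of the differential at the same time.
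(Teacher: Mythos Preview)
Your proposal is correct and follows essentially the same approach as the paper: both expand $(\{-\}\otimes\id)\circ\{-\}$ into four terms involving the cobrace, use the cyclic permutation $\zeta=(\id\otimes\tau)\circ(\tau\otimes\id)$ with $\zeta^3=\id$, and invoke the pre-coJacobi identity to conclude. Your organisation is in fact a bit cleaner than the paper's: you group the four terms into the coassociator $A=(b\otimes\id)\circ b-(\id\otimes b)\circ b$ and exploit $J\circ\zeta=J$ systematically to reduce the coJacobi expression to $J\circ A-J\circ(\tau\otimes\id)\circ A$, then use pre-coJacobi once in the form $(\id\otimes\tau)\circ A=A$ to finish; the paper instead substitutes the pre-coJacobi identity for $(b\otimes\id)\circ b$ repeatedly and cancels term by term over several lines. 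The coderivation check is handled the same way in both.
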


\begin{proof}
We only need to check the coJacobi identity for $\{-\}$, and for this we denote: 
\begin{align*}
  & A\coloneqq ([-]\otimes \id)\circ [-], &
  B \coloneqq (\id\otimes [-])\circ [-],
  & &
  \xi \coloneqq (\id\otimes \tau )\circ (\tau\otimes \id). 
\end{align*}
Observe the following useful identities:
\begin{align}
  \label{eq3}
& \xi^2=(\tau\otimes \id)\circ (\id\otimes \tau), &  \xi^3=\id, & 
\end{align} 
and still
\begin{align}
  \label{eq2}
    & \xi \circ (\tau \otimes \id)= (\id\otimes \tau).
\end{align}
The pre-coJacobi identity then states that 
\begin{align}
  \label{eq1}
& A= B + (\id\otimes\tau) \circ A - (\id\otimes \tau) \circ B,
\end{align}
 while the coJacobi identity can be formulated as: 
 $$
 \big(\id+\xi + \xi^2\big)\big(A-\xi \circ B - (\tau\otimes \id) \circ A + (\tau\otimes \id)\circ \xi \circ B\big) =0.
 $$ 
With this, let us compute:
  \begin{equation*}
    \begin{array}{rcl}
      &&    
      \big(\id+\xi + (\tau\otimes \id)(\id\otimes\tau)\big)
          \circ \big(A-\xi \circ B - (\tau\otimes \id)\circ A + (\tau\otimes \id) \circ \xi \circ B \big) 
      \\[1mm]
      &
      \stackrel{\scriptscriptstyle{\eqref{eq1}}}{=}
      &
       B + (\id\otimes\tau) \circ A - (\id\otimes\tau) \circ B + \xi \circ A + (\tau\otimes \id) \circ (\id\otimes\tau) \circ A - \xi \circ B - \xi^2 \circ B - B        
        \\[1mm]
& &
 \
 - (\tau\otimes \id) \circ A - (\id\otimes\tau) \circ A - (\tau\otimes \id)\circ \xi \circ A + (\tau\otimes \id)\circ \xi \circ B
        \\[1mm]
& &
 \
 + (\tau\otimes \id) \circ B + (\tau\otimes 1) \circ \xi^2 \circ  B 
\\[1mm]
      &
      \stackrel{\scriptscriptstyle{\eqref{eq2}}}{=}
      &
      - (\id\otimes\tau) \circ B + \xi \circ A + (\tau\otimes \id)\circ (\id\otimes\tau) \circ A  - \xi \circ B - \xi^2 \circ B - (\tau\otimes \id) \circ A  
        \\[1mm]
& &
        \
        -(\tau\otimes \id) \circ \xi \circ A  
 + (\tau\otimes \id) \circ \xi \circ B + (\tau\otimes \id) \circ B + ( \tau\otimes \id) \circ \xi^2 \circ B
\\[1mm]
      & 
      \stackrel{\scriptscriptstyle{\eqref{eq1}}}{=} 
        &
      \xi \circ A + \xi^2 \circ A  - \xi \circ B - \xi^2 \circ B  - (\tau\otimes \id) \circ A  -(\tau\otimes \id) \circ \xi \circ A
        \\[1mm]
& &
        \
      + (\tau\otimes \id)\circ \xi \circ B + (\tau\otimes \id) \circ B 
\\[1mm]
      &
      =
      & (\xi+\xi^2) \circ (A-B) - (\tau\otimes \id)\circ A - (\tau\otimes \id)\circ \xi \circ A + (\tau\otimes \id)\circ \xi \circ B + (\tau\otimes \id) \circ B
      \\[1mm]
    &
    =
    &
    \xi \circ (\id\otimes\tau) \circ A + \xi^2 \circ (\id\otimes\tau)\circ A -(\tau\otimes \id)\circ A - (\tau\otimes \id)\circ \xi \circ A - \xi\circ (\id\otimes \tau)\circ B 
    \\[1mm]
& &
 \
 - \xi^2 \circ (\id\otimes \tau) \circ B + (\tau\otimes \id) \circ \xi \circ B + (\tau\otimes \id) \circ B
       \\[1mm]
     &
      \stackrel{\scriptscriptstyle{}}{=}
      &\big(\xi\circ (\id\otimes \tau) - (\tau\otimes \id)\circ \xi\big) \circ (A-B)
  \\[1mm]
     &
\stackrel{\scriptscriptstyle{\eqref{eq2}}}{=}
      &
0,
      \end{array}
   \end{equation*}
which concludes the proof of the coJacobi identity. One still needs to check that, if $d$ is a coderivation with respect to the cobrace $[-]$, it is still a coderivation with respect to its symmetrisation $\{-\}$. This is true since $(d\otimes \id)\circ \tau = \tau \circ (\id\otimes d)$, where the Koszul signs are always applied.
We have therefore shown that $\big(L,d,\{-\}\big)$ is a dg Lie coalgebra, as desired.
\end{proof}

\medskip
 \begin{center}
 * \qquad * \qquad *
   \end{center}
 \medskip

Turning to our objectives, 
in case of a cooperad, one has a canonical construction for a cobrace turning it into a pre-Lie coalgebra in the sense that the pre-coJacobi identity \eqref{precoja} is fulfilled. To this end, let us define:

\pagebreak

\begin{deff}
  \label{briciole2}
  Let
  $\cC=\bigoplus_{n \in \N} \cc(n)$ be a cooperad.
  \begin{enumerate}
    \compactlist{99}
\item
  For any $p>0, q\geq 0$,
the map
\[
    [-]_n \colon \cc(n) \rightarrow \textstyle\bigoplus\limits_{p+q=n+1}\cc(p)\otimes \cc(q)
    \]
given by the totalisation
\begin{equation}
      \label{sale}
            [x]_n \coloneqq \Sum_{p+q=n+1} \Sum_{i=1}^p
(-1)^{\foo (q-1)(p-i)}
\Delta_{p,q,i}(x)
    \end{equation} 
    of all partial decompositions
    $
    \Delta_{p,q,i}\colon \cc(p+q-1)\rightarrow \cc(p) \otimes \cc(q)
    $
    is called the ({\em canonical}\hskip .3pt) {\em cobrace} on $\cC(n)$. Since $\gD_{0, q, i} = 0$ for all $q,i$, this can be rewritten as
    \begin{equation}
      \label{sale2}
            [x]_n= \Sum^{n+1}_{p=1} \Sum_{i=1}^p
(-1)^{\foo (n-p)(p-i)}
\Delta_{p,n+1-p,i}(x).
    \end{equation} 
\item
  The degree $-1$ morphism
  \begin{equation}
    \label{arancia}
  \{-\} \coloneqq (\id-\rho)\circ [-]
\end{equation}
will be referred to as the ({\em canonical}\hskip .3pt) {\em cobracket} of the cooperad $\cC$. 
\end{enumerate}
\end{deff}

\noindent Observe the different signs for $\{-\}$ from Proposition \ref{pesce} in contrast to that of the canonical co- bracket just defined. We shall come back to this in more detail in Corollary \ref{sushi}.

\begin{prop}
  \label{lunapop}
  For any cooperad $\cc=\bigoplus_n \cc(n)$,
  the cobrace
 \eqref{sale} satisfies the pre-coJacobi identity of Definition \eqref{precoja}.
\end{prop}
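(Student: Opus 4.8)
The plan is to verify the pre-coJacobi identity \eqref{precoja} directly from the coassociativity relations \eqref{orzo}, treating the cobrace \eqref{sale} as a suitably-signed totalisation of all partial decompositions $\Delta_{p,q,i}$. First I would expand both sides of \eqref{precoja} as sums over the relevant arity/branch indices: the left-hand side $\big([-]\otimes\id-\id\otimes[-]\big)\circ[-]$ applied to $x\in\cC(n)$ produces a sum of terms $(\Delta_{p,q,i}\otimes\id)\circ\Delta_{p+q-1,r,j}(x)$ with $p+q+r=n+2$ (from the $[-]\otimes\id$ part) minus terms $(\id\otimes\Delta_{q,r,\ell})\circ\Delta_{p,q+r-1,j}(x)$ (from the $\id\otimes[-]$ part), each decorated with the product of the two sign factors coming from \eqref{sale}. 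The three cases of \eqref{orzo} ($i\le j-q$; $j-q<i\le j$; $j<i\le p$) sort every left-composite term into one of three buckets: the ``outer-disjoint'' composites (cases 1 and 3, which become $(\id\otimes\gs)\circ(\Delta\otimes\id)\circ\Delta$), and the ``nested'' composites (case 2, which become $(\id\otimes\Delta)\circ\Delta$, i.e.\ exactly the shape appearing in the $\id\otimes[-]$ summand).

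Second, I would match the nested composites against the $\id\otimes[-]$ term in $A\coloneqq([-]\otimes\id)\circ[-]$ and against the corresponding term in $B\coloneqq(\id\otimes[-])\circ[-]$. The key bookkeeping point is that \eqref{precoja} is equivalent to $A-B=(\id\otimes\tau)\circ(A-B)$, so I want to show that $A-B$, after applying \eqref{orzo}, consists only of the disjoint composites (cases 1 and 3), that these come in pairs swapped by the flip $\gs$ acting on the last two tensor factors, and that the accompanying signs are precisely those of the graded flip $\tau$ on $\cC(q)\otimes\cC(r)$ — here one uses that $\cC(q)$ sits in homological degree related to $q$ so that $\tau$ contributes the sign $(-1)^{(\ast)(\ast)}$ that reconciles the two exponents $(q-1)(p-i)$, $(r-1)(p-j)$, etc.\ coming from \eqref{sale}. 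Concretely, a case-1 composite for indices $(p,q,r,i,j)$ gets identified via \eqref{orzo} with a case-3 composite for a permuted index set, and the sign discrepancy between the two (after pulling out the $\gs$) is exactly $(-1)^{(q-1)(r-1)}$, which is the $\tau$-sign; summing over all such pairs gives $A-B=(\id\otimes\tau)\circ(A-B)$ as required.

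Third, there is the boundary bookkeeping: the cases $q=0$ or $r=0$ of \eqref{orzo} (equations \eqref{orzo2}–\eqref{orzo4}) must be checked not to spoil the cancellation, and the convention $\Delta_{0,q,i}\equiv 0$ must be used to discard the $p=0$ contributions — this is why \eqref{sale2} is the form actually summed. I would handle these degenerate strata by the same pairing argument, noting that the reduced forms in \eqref{orzo2}–\eqref{orzo4} are literally the $q=0$ / $r=0$ specialisations of the three generic cases, so the pairing (disjoint $\leftrightarrow$ disjoint via $\gs$, nested cancelling against $B$) persists verbatim.

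The main obstacle I anticipate is purely the sign arithmetic: one must check that the exponent $(q-1)(p-i)$ chosen in \eqref{sale}, when carried through the index substitutions dictated by each branch of \eqref{orzo} and combined with the signs already present in \eqref{orzo} itself (the $\gs$ in cases 1 and 3 is signless, but the reindexing $j\mapsto j-q+1$, $\ell\mapsto j-i+1$, etc.\ shifts the exponents), reproduces exactly the $\tau$-sign on the relevant two factors and nothing more — neither an extra global sign nor a leftover degree-dependent factor. I would organise this by fixing the target term $(\id\otimes\gs)\circ(\Delta_{p,r,\ast}\otimes\id)\circ\Delta_{\ast,q,\ast}$ and computing, for each of the two source terms mapping to it, the total sign as a function of $p,q,r,i,j$, then subtracting; everything else (which composites appear, how they are nested) is forced by \eqref{orzo} and is combinatorially routine. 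It is worth remarking that this is the exact dual of the classical computation showing the Gerstenhaber brace on an operad is pre-Lie, so the shape of the argument is known; only the placement of signs, dictated by the (de)suspension conventions in \eqref{piadina}, needs care.
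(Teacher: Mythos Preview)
Your proposal is correct and follows essentially the same route as the paper: expand $A=([-]\otimes\id)\circ[-]$ and $B=(\id\otimes[-])\circ[-]$ as signed sums of iterated partial decompositions, use case~2 of \eqref{orzo} to identify $B$ with the nested portion of $A$, and then pair the remaining disjoint composites (cases~1 and~3) under the flip to exhibit the required symmetry. The paper carries this out with explicit sign functions $\alpha,\beta$ and index gymnastics rather than your bucket language, but the content is identical.

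One small caution on signs: the discrepancy $(-1)^{(q-1)(r-1)}$ you compute is, in the paper's conventions \eqref{piadina} with $|a|=n$ for $a\in\cC(n)$, the $\rho$-sign rather than the $\tau$-sign. This is not an error in your argument --- it is exactly the suspension issue the paper flags in the remark after Definition~\ref{briciole2} and resolves in Corollary~\ref{sushi} --- but when you write up the sign check you should be explicit that the pre-coJacobi identity \eqref{precoja} is being verified for $\cC[1]$ (where $\tau$ becomes $\rho$), or equivalently that the cobrace signs in \eqref{sale} are calibrated to the shifted grading.
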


\begin{rmk}
Prior to giving a rigorous proof of the pre-coJacobi identity, let us illustrate the idea behind it. Observe first that both in  $([-]\otimes \id)\circ [-]_n$ and $(\id\otimes [-])\circ [-]_n$ there are two types of three-vertex trees: those whose vertices lie in a same path from the root to one of the leaves, and those where the second and third vertices are independent, {\em i.e.}, the two types

\vspace*{-.25cm}

\begin{center}
\begin{tikzpicture}[scale=.8, yscale=1, line width=0.7pt]

\node(B) at (-2.50, 0) {};
\node(b) at (-2.50, -1) {$\bullet$};
\node(7) at (-3.15, -2) {};
\node(c) at (-2.75, -2) {$\bullet$};
\node(8) at (-2.25, -2) {};
\node(9) at (-1.65,-2){};
\node(d) at (-2.50,-3) {$\bullet$};
\node(10) at (-3,-3){};
\node(11) at (-2.75,-4){};
\node(12) at (-2.25, -4){};

\draw(B)--(b);
\draw(b)--(7);
\draw(b)--(c);
\draw(b)--(8);
\draw(b)--(9);
\draw(c)--(d);
\draw(c)--(10);
\draw(d)--(11);
\draw(d)--(12);


\node(resp) at (0.25,-2){resp.};

\node(C) at (3.50,0){};
\node(e) at (3.50,-1){$\bullet$};
\node(13) at (3.50,-2){};
\node(f)at (2.75,-2) {$\bullet$};
\node(g) at (4.25,-2) {$\bullet$};
\node(14) at (3.40,-3){};
\node(15) at (2.15,-3){};
\node(16) at (2.75, -3){};
\node(17) at (3.75, -3) {};
\node(18) at (4.75, -3) {};

\draw(C)--(e);
\draw(e)--(13);
\draw(e)--(f);
\draw(e)--(g);
\draw(f)--(14);
\draw(f)--(15);
\draw(f)--(16);
\draw(g)--(17);
\draw(g)--(18);

\end{tikzpicture}
\end{center}
\vspace*{-.35cm}
%
%
%
%
%
The two different types of trees represent the different types of coassociativity illustrated in  \eqref{orzo}, where three equations are needed to take into account the possible permutations of the tensor factors. The coassociator consists of a sum of decompositions of the two types above, and by rearranging the trees in the coassociator, the ones of the type on the left hand side mutually cancel so that we are left with the difference between trees of the type on the right hand side, which, grouping together, yields an expression of the form:

\vspace*{-.2cm}

  \begin{center}
\begin{tikzpicture}[scale=.8, yscale=1, line width=0.7pt] 
  
 \node(A) at (-4, -1) {};
\node(a) at (-4, -2) {$\bullet$};
\node(1) at (-5.25, -3) {};
\node(2) at (-4.75, -3) {};
\node(3) at (-4.25, -3) {};
\node(4) at (-3.75, -3) {};
\node(5) at (-3.25, -3) {};
\node(6) at (-2.75, -3) {};

\draw(A)--(a);
\draw(a)--(1);
\draw(a)--(2);
\draw(a)--(3);
\draw(a)--(4);
\draw(a)--(5);
\draw(a)--(6);

\node(x1) at (-5.25, -2.5) {};
\node(x2) at (-4, -1.75) {};


\node() at (-2.50, -1.75) {$\mapsto$};

\node(y1) at (-1.25, -4.25) {};
\node(y2) at (-0, -5) {};

\node() at (-1.50, -1.75) {$\sum \pm$};

\node(C) at (0.50,0){};
\node(e) at (0.50,-1){$\bullet$};
\node(13) at (0.50,-2){};
\node(f)at (0,-2) {$\bullet$};
\node(g) at (1,-2) {$\bullet$};
\node(14) at (0,-3){};
\node(15) at (-0.5,-3){};
\node(16) at (0.5, -3){};
\node(17) at (0.75, -3) {};
\node(18) at (1.35, -3) {};

\draw(C)--(e);
\draw(e)--(13);
\draw(e)--(f);
\draw(e)--(g);
\draw(f)--(14);
\draw(f)--(15);
\draw(f)--(16);
\draw(g)--(17);
\draw(g)--(18);

\node() at (2, -1.75) {$-$};

\node(E) at (3.50,0){};
\node(m) at (3.50,-1){$\bullet$};
\node(25) at (3.50,-2){};
\node(n)at (3,-2) {$\bullet$};
\node(o) at (4,-2) {$\bullet$};
\node(26) at (2.75,-3){};
\node(27) at (3.25,-3){};
\node(28) at (3.5, -3){};
\node(29) at (4, -3) {};
\node(30) at (4.5, -3) {};

\draw(E)--(m);
\draw(m)--(25);
\draw(m)--(n);
\draw(m)--(o);
\draw(n)--(26);
\draw(n)--(27);
\draw(o)--(28);
\draw(o)--(29);
\draw(o)--(30);

 \end{tikzpicture}
  \end{center}

\vspace*{-1.9cm}
  
\noindent This is evidently symmetric in the last two tensor factors, as desired. After this conceptual explanation, let us prove the proposition rigorously.
\end{rmk}

\begin{proof}[Proof of Proposition \ref{lunapop}]
Let us prove that the pre-Jacobi identity holds, {\em i.e.}, that for any $n \in \N$ the coassociator
\begin{equation}
  \label{zero}
\big([-]\otimes \id - \id \otimes [-]\big) \circ [-]_n
\end{equation}
is symmetric in the last two tensor factors.
By doing so, in this proof (and only in this one) we shall be quite detailed
in how to explicitly apply the coassociativity relations \eqref{orzo}--\eqref{orzo4} to illustrate the idea, and much
less so in the following proofs of similar nature. Indeed, one can write the first term as follows:
\begin{equation}
  \label{prima}
([-]\otimes \id)\circ [-]_n= \sum_{p=1}^{n+1}\sum_{j=1}^p \sum_{a=1}^{p+1} \sum_{h=1}^a (-1)^{\ga(n,p,j,a,h)} (\Delta_{a,p+1-a,h}\otimes \id)\circ \Delta_{p,n+1-p,j},
\end{equation}
where the sign is given by $\ga(n,p,j,a,h)= (n-p)(p-j)+(p-a)(a-h)$.


Using the fact that $\sum_{p=1}^{n+1}\sum_{a=1}^p= \sum_{a=1}^{n+1}\sum_{p=a}^{n+1}$, renaming $p$ by $a$ (and vice versa), and calling $b=a+1-p$, we can rewrite the expression in \eqref{prima} as:
\begin{align*}
  \sum_{p=1}^{n+1}\sum_{h=1}^p\sum_{b=1}^{n+2-p} \sum_{j=1}^{b+p-1} (-1)^{\ga(n,b+p-1,j,p,h)} (\Delta_{p,b,h}\otimes \id)\circ \Delta_{b+p-1,n+2-b-p,j} &
  \\
  + \sum_{a=1}^{n+1}\sum_{j=1}^a\sum_{h=1}^{a+1} (-1)^{\ga(n,a,j,p,h)} (\Delta_{a+1,0,h}\otimes \id)\circ \Delta_{a,n+1-a,j}.& 
\end{align*} 
On the other hand, the second term of \eqref{zero} writes as:
\begin{equation}
  \label{seconda}
(\id\otimes [-])\circ [-]_n=\sum_{p=1}^{n+1}\sum_{i=1}^p \sum_{b=1}^{n+2-p}\sum_{k=1}^b (-1)^{\beta(n,p,i,b,k)} (\id\otimes \Delta_{b,n+2-p-b,k})\circ \Delta_{p,n+1-p,i},
\end{equation}
where the sign arises from $\beta(n,p,i,b,k)= (n-p)(p-i)+ (n+1-p-b)(b-k)$.

As said, we will now use repeatedly the coassociativity relations to rewrite the summands. The first application of the coassociativity laws will be performed explicitly, and afterwards we shall omit most of the details in the subsequent ones.  
The second
line in \eqref{orzo2} states that, for natural numbers $\underline{p}, \underline{q}, \underline{i}, \underline{r}, \underline{j}$, with $1 \leq \underline{i} \leq \underline{p}$ and $1 \leq \underline{j}\leq \underline{p}+\underline{q}-1$, whenever $\underline{j}-\underline{q}\leq \underline{i}\leq \underline{j}$, we have that
\[
(\Delta_{\underline{p},\underline{q},\underline{i}} \otimes \id )\circ \Delta_{\underline{p}+\underline{q}-1, \underline{r}, \underline{j}}= (\id \otimes \Delta_{\underline{q}, \underline{r}, \underline{j}-\underline{i}+1})\circ \Delta_{\underline{p}, \underline{q}+ \underline{r}-1, \underline{i}}.
\] 
If we now rename the indices in \eqref{seconda} as:
\begin{equation*}
  \underline{i}=i, \quad
  \underline{p}=p, \quad
  \underline{q}=b, \quad
  \underline{r}= n+2-p-b, \quad
\underline{j}-\underline{i}+1= k,
\end{equation*}
we see that the condition that $\underline{j}-\underline{q}\leq \underline{i}\leq \underline{j}$ is always verified, hence we can rewrite \eqref{seconda} as: 
\begin{align*}
  \sum_{p=1}^{n+1}\sum_{i=1}^p \sum_{b=1}^{n+2-p}\sum_{j=i}^{ b+i-1} (-1)^{\beta(n,p,i,b,j-i+1)} (\Delta_{p,b,i}\otimes \id)\circ \Delta_{p+b-1,n+2-p-b,j},
\end{align*} where we have set $j= k+i-1$.
Since $\ga(n,b+p-1,j,p,i) \equiv \beta(n,p,i,b,j-i+1)$ modulo $2$, we can erase part of the summands and write the coassociator as 
\begin{align*}
\sum_{p=2}^{n+1}\sum_{h=2}^p\sum_{b=1}^{n+2-p}\sum_{j=1}^{h-1}(-1)^{\ga(n,b+p-1,j,p,h)} (\Delta_{p,b,h}\otimes \id)\circ \Delta_{b+p-1,n+2-b-p,j} &
&
\\
+ \sum_{a=1}^{n+1}\sum_{j=1}^a\sum_{h=1}^{a+1} (-1)^{\ga(n,a,j,a+1,h)} (\Delta_{a+1,0,h}\otimes \id)\circ \Delta_{a,n+1-a,j} &
\\
\qquad + \sum_{p=2}^{n+1}\sum_{h=1}^{p}\sum_{b=1}^{n+2-p}\sum_{j=h+b}^{b+p-1}(-1)^{\ga(n,b+p-1,j,p,h)} (\Delta_{p,b,h}\otimes \id)\circ \Delta_{b+p-1,n+2-b-p,j}.
\tag{$\dagger$}
\end{align*}
Observe that $h$ cannot run until $p$ in the third term (as one might expect) since $j$ runs from $h+b$ to $b+p-1$. Therefore, also $p$ starts at $2$, not at $1$.

Calling $a$ by $p$ and re-indexing $p \mapsto p+1$, we can reunite part of the second sum with the first one and consequently rewrite the coassociator \eqref{zero} as
\begin{align}
      \label{crepe1}
    \Sum_{p=2}^{n+2}\Sum_{b=0}^{n+2-p} \Sum_{h=2}^p\Sum_{j=1}^{h-1}
    (-1)^{\ga(n,b+p-1,j,p,h)} (\Delta_{p,b,h}\otimes \id)\circ \Delta_{b+p-1,n+2-b-p,j} 
    &
    \\
    \label{crepe2}
    + \Sum_{p=2}^{n+2} \Sum_{h=1}^{p-1} \Sum^{p-1}_{j=h} (-1)^{\ga(n,p-1,j,p,h)} (\Delta_{p,0,h}\otimes \id)\circ \Delta_{p-1,n+2-p,j}&
   \\ 
   \label{crepe10}
   \qquad + \sum_{p=2}^{n+1}\sum_{h=1}^{p}\sum_{b=1}^{n+2-p}\sum_{j=h+b}^{b+p-1}(-1)^{\ga(n,b+p-1,j,p,h)} (\Delta_{p,b,h}\otimes \id)\circ \Delta_{b+p-1,n+2-b-p,j}.
\end{align}
We need to prove that this is symmetric in the last two tensor factors. By applying the relations \eqref{orzo} once again, we obtain for the expression \eqref{crepe1}:
\begin{align*}
& \Sum_{p=2}^{n+2}\Sum_{h=2}^p\Sum_{b=0}^{n+2-p}\Sum_{j=1}^{h-1}
(-1)^{\ga(n,b+p-1,j,p,h)} (\id\otimes \sigma)\circ (\Delta_{p,b,h}\otimes \id)\circ \Delta_{b+p-1,n+2-b-p,j} 
\\
& = 
\Sum_{p=2}^{n+2}\Sum_{h=2}^p\Sum_{b=0}^{n+2-p}\Sum_{j=1}^{h-1}
(-1)^{\ga(n, b+p-1,j,p,h)} (\Delta_{p,b,j}\otimes \id)\circ \Delta_{b+p-1,n+2-b-p,h+b-1}
\\
& = 
\Sum_{p=2}^{n+2}\Sum_{j=2}^p\Sum_{b=0}^{n+2-p}\Sum_{h=1}^{j-1}
(-1)^{\ga(n, b+p-1,h,p,j)} (\Delta_{p,b,h}\otimes \id)\circ \Delta_{b+p-1,n+2-b-p,j+b-1},
\end{align*}
where we re-indexed $b \mapsto n+2-b-p$ and called $h$ by $j$ (and vice versa) in the second resp.\ third step. Re-indexing $j \mapsto j+b-1$ and exchanging sums, one then has:
\begin{align}
\nonumber
& \Sum_{p=2}^{n+2}\Sum_{b=0}^{n+2-p} \Sum_{h=1}^{p-1} \Sum^{p}_{j=h+1}
(-1)^{\ga(n, b+p-1,h,p,j-b+1)} (\Delta_{p,b,h}\otimes \id)\circ \Delta_{b+p-1,n+2-b-p,j+b-1} 
\\
\nonumber
& = 
\Sum_{p=2}^{n+2}\Sum_{b=0}^{n+2-p} \Sum_{h=1}^{p-1} \Sum^{p+b-1}_{j=h+b}
(-1)^{\ga(n, b+p-1,h,p,j-b+1)}  (\Delta_{p,b,h}\otimes \id)\circ \Delta_{b+p-1,n+2-b-p,j} 
\\
  \label{crepe3}
& = 
\Sum_{p=2}^{n+2}\Sum_{b=1}^{n+2-p} \Sum_{h=1}^{p-1} \Sum^{p+b-1}_{j=h+b}
(-1)^{\ga(n, b+p-1,h,p,j-b+1)}  (\Delta_{p,b,h}\otimes \id)\circ \Delta_{b+p-1,n+2-b-p,j} 
\\
  \label{crepe4}
& \qquad
+ \Sum_{p=2}^{n+2} \Sum_{h=1}^{p-1} \Sum^{p-1}_{j=h}
(-1)^{\ga(n, p-1,h,p,j+1)} (\Delta_{p,0,h}\otimes \id)\circ \Delta_{p-1,n+2-p,j},
\end{align}
and one sees that the second sum \eqref{crepe4} is precisely the second sum in \eqref{crepe2}. Observe that \eqref{crepe3} corresponds to the tensor flip of $\eqref{crepe1}'$, where by $ \eqref{crepe1}'$ we denote the sub-summand of \eqref{crepe1} where we sum on those $b$ for which $b\leq n+1-p$.

Finally, applying $\id \otimes \gs$ to \eqref{crepe2} yields, by the usual exchanging of variable names and re-indexing:
\begin{align}
\nonumber
     &  \Sum_{p=2}^{n+2} \Sum_{h=1}^{p-1} \Sum^{p-1}_{j=h} (-1)^{\ga(n,p-1,j,p,h)} (\id \otimes \gs) \circ (\Delta_{p,0,h}\otimes \id)\circ \Delta_{p-1,n+2-p,j}
      \\
            \label{crepe5}
      & =
      \Sum_{p=2}^{n+2} \Sum_{h=2}^{p} \Sum^{h-1}_{j=1} (-1)^{ \ga(n,p-1,h-1,p,j)} (\Delta_{p,n+2-p,h}\otimes \id)\circ \Delta_{n+1,0,j},
      \end{align}
which cancels with \eqref{crepe1} in case $b = n+2-p$.
We can now conclude by observing that $\eqref{crepe3}= \eqref{crepe10}$ and, consequently, that the tensor flip of \eqref{crepe10} equals $\eqref{crepe1}'$. This proves that \eqref{zero} is symmetric in the last two tensor factors, as wanted.
\end{proof}

In the next section, we will explain how to obtain a differential in order to describe the full structure of a dg Lie coalgebra induced by a cooperad.

\section{The chain complex structure}

In case the cooperad is a (counital) cooperad with comultiplication, one obtains a richer structure on it, the first one of interest being that of a chain complex or, to be precise, the structure of a simplicial $k$-module. Indeed, one has:

\begin{prop}
\label{sugo}
Let $(\cC, \mu^c, e^c)$ be a cooperad with comultiplication. Setting
 \begin{eqnarray}
   \label{riso1}
   d_0 & := & (\mu^c \otimes \id) \circ \gD_{2,n-1,2},
   \\
   \label{riso2}
d_i & := & (\id \otimes \mu^c) \circ \gD_{n-1,2,i}, \qquad 1 \leq i \leq n-1,
\\
\label{riso3}
d_n & := & (\mu^c \otimes \id) \circ \gD_{2,n-1,1},
\\
\label{pasta}
s_i& := & (\id\otimes e^c)\circ \Delta_{n+1,0,i+1}, \quad 0 \leq i \leq n,
  \end{eqnarray} 
the triple $(\cC, d_\bull, s_\bull)$ defines a simplicial
  object in $\kmod$. In particular,  
  the map $d\colon \cc \to \cc[-1]$ given by the alternating sum
  $d = \sum^n_{i=0} (-1)^i d_i$ is a differential, that is, $d^2=0$.
\end{prop}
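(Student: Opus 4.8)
The plan is to verify the simplicial identities for $(\cC, d_\bull, s_\bull)$ directly, since once these are established the claim that $d = \sum_{i=0}^n (-1)^i d_i$ squares to zero is the classical formal consequence (the alternating sum of the face maps of a simplicial object is always a differential). So I would reduce the whole proposition to checking:
\begin{itemize}
\compactlist{99}
\item $d_i d_j = d_{j-1} d_i$ for $i < j$;
\item $s_i s_j = s_{j+1} s_i$ for $i \leq j$;
\item the three mixed relations $d_i s_j = s_{j-1} d_i$ for $i < j$, $d_j s_j = d_{j+1} s_j = \id$, and $d_i s_j = s_j d_{i-1}$ for $i > j+1$.
\end{itemize}

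Each of these should follow from the cooperadic coassociativity relations \eqref{orzo}--\eqref{orzo4} together with the (co)associativity \eqref{polenta} and (co)unitality \eqref{polentaunita} of $\mu^c$ relative to $e^c$; these are exactly the ``multiple use'' inputs advertised at the end of Remark \ref{pane}. Concretely: for $d_i d_j$ with both $i,j$ in the ``interior'' range, expanding $d_i = (\id\otimes\mu^c)\circ\gD_{n-1,2,i}$ and composing produces $(\id\otimes\mu^c\otimes\mu^c)$ applied to a composite of two $\gD$'s, and the middle case of \eqref{orzo} (the $j-q < i \leq j$ branch, with $q=2$) rewrites $(\gD_{p,q,i}\otimes\id)\circ\gD_{\ldots}$ into $(\id\otimes\gD_{q,r,\ldots})\circ\gD_{\ldots}$; one then matches indices to recognize the result as $d_{j-1}d_i$. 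The cases where one of the indices is $0$ or $n$ (so that $d_0$ or $d_n$ involves $\gD_{2,n-1,-}$ rather than $\gD_{n-1,2,-}$) use the first or third branch of \eqref{orzo}, the one with the $(\id\otimes\gs)$ flip, and here one must check that the sign/flip $\gs$ coming from coassociativity is absorbed by the cocommutativity \eqref{polenta} of $\mu^c$, i.e.\ $(\mu^c\otimes\mu^c)\circ(\id\otimes\gs) = (\mu^c\otimes\mu^c)$ up to the appropriate reindexing. The degeneracy relations $s_i s_j = s_{j+1}s_i$ are pure coassociativity among the $\gD_{n+1,0,-}$ maps, governed by \eqref{orzo4}, and the mixed $d$-$s$ relations use \eqref{orzo2} and \eqref{orzo3}; the two identities $d_j s_j = d_{j+1} s_j = \id$ are precisely the counitality statement \eqref{polentaunita} $(\mu^c\otimes e^c)\circ\gD_{2,0,1} = (\mu^c\otimes e^c)\circ\gD_{2,0,2} = \mathbbm{1}^c$ combined with the counit axioms $(\id\otimes\epsilon)\circ\gD_{n,1,i} = \id$ and $(\epsilon\otimes\id)\circ\gD_{1,n,1} = \id$ from Definition \ref{carciofo} (observing that $\mathbbm{1}^c$, being a functional on $\cC(1)$, plays the role of a counit on the relevant factor).

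The main obstacle I anticipate is bookkeeping, not conceptual difficulty: one has to carefully split into the boundary cases $i,j\in\{0,n\}$ versus interior cases because $d_0, d_n$ are built from $\gD_{2,n-1,-}$ while $d_1,\dots,d_{n-1}$ are built from $\gD_{n-1,2,-}$, and then within each case determine which of the three branches of \eqref{orzo} applies by comparing the inequalities among the degrafting indices; keeping the $(\id\otimes\gs)$ flips straight and confirming they are annihilated by the cocommutativity of $\mu^c$ (or, for the degeneracies, by $e^c$) is where an error is most likely to creep in. A clean way to organize this is to note that $\cC$ with $\mu^c, e^c$ is, up to dualization, exactly the cosimplicial-to-simplicial transpose of the classical bar/cobar construction attached to an operad with multiplication (Remark \ref{pane} identifies $\cC^*$ as an operad with multiplication), so every identity we need is the formal dual of a known identity in Gerstenhaber's classical setting; I would still spell out a representative computation (say $d_i d_j = d_{j-1} d_i$ for interior indices, and $d_0 d_n$) in detail and then assert the remaining ones follow mutatis mutandis.

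Once all face and degeneracy identities hold, $(\cC, d_\bull, s_\bull)$ is a simplicial $k$-module by definition, and the final sentence $d^2 = 0$ for $d = \sum_{i=0}^n (-1)^i d_i$ is immediate from $d_i d_j = d_{j-1}d_i$ by the standard sign-cancellation argument: in $d^2 = \sum_{i,j} (-1)^{i+j} d_i d_j$ the terms with $i \geq j$ and those with $i < j$ pair off with opposite signs after applying the simplicial identity, so the total vanishes.
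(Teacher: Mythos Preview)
Your proposal is correct and follows essentially the same approach as the paper: verify the simplicial identities case by case using the cooperadic coassociativity relations \eqref{orzo}--\eqref{orzo4}, the (co)associativity \eqref{polenta} and (co)unitality \eqref{polentaunita} of $\mu^c$, and then invoke the standard sign-cancellation to get $d^2=0$. One small correction: the absorption of the flip $\gs$ in the boundary cases is not ``cocommutativity \eqref{polenta}'' (that equation is coassociativity) but rather the trivial fact that $(\id\otimes\mu^c\otimes\mu^c)\circ(\id\otimes\gs)=\id\otimes\mu^c\otimes\mu^c$ because both $\mu^c$-factors land in $k$; the paper uses \eqref{polenta} separately, e.g.\ in the $d_0d_1=d_0d_0$ step.
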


\begin{proof}
We need to check the cosimplicial identities, namely:
\begin{eqnarray}
  \label{cosimp1}
d_id_j &=& d_{j-1}d_i  \text{ for $i<j$},
\\
  \label{cosimp2}
s_is_j &=& s_js_{i-1} \text{ for $i>j$}, 
\\
  \label{cosimp3}
  d_is_j &=&
\begin{cases}
s_{j-1}d_i &  \text{if} \ \ i <j,
  \\
 \id &  \text{if}\  i\in \{j,j+1\},
 \\
 s_j d_{i-1} & \text{if} \ i>j+1.
\end{cases}
\end{eqnarray}
Regarding the relation \eqref{cosimp1} for the faces, one repeatedly uses coassociativity of the cocomposition in $\cc$ and the comultiplication $\mu^c$, and also the fact that $ (\id\otimes \mu^c\otimes \mu^c)\circ (\id\otimes \sigma)= (\id\otimes \mu^c\otimes \mu^c).$ Moreover, one needs to distinguish between external faces and internal ones. As an example, let us show two simplicial identities of the type in \eqref{cosimp1}.
%
\item
  For $j=1,i=0$, we have: 
\begin{eqnarray*}
  d_0d_0
 &      \stackrel{\scriptscriptstyle{\eqref{orzo}}}{=} &
  (\mu^c\otimes\mu^c\otimes \id)\circ(\id\otimes \Delta_{2,n-2,2})\circ\Delta_{2,n-1,2} \\ 
  &      \stackrel{\scriptscriptstyle{\eqref{polenta}}}{=} &
  (\mu^c\otimes\mu^c\otimes \id)\circ(\Delta_{2,2,2}\otimes \id)\circ\Delta_{3,n-2,3} \\
&      \stackrel{\scriptscriptstyle{\eqref{orzo}}}{=} &
  (\mu^c\otimes\mu^c\otimes \id)\circ(\Delta_{2,2,1}\otimes \id)\circ\Delta_{3,n-2,3} \\
  &      \stackrel{\scriptscriptstyle{}}{=} &
(\mu^c\otimes \id\otimes  \mu^c)\circ (\Delta_{2,n-2,2}\otimes \id)\circ \Delta_{n-1,2,1} \, = \,  d_0d_1. 
\end{eqnarray*}



\item For $1 \leq i \leq n-1$ and $2 \leq j\leq n-1$, as well as $i<j-1$, one computes
\begin{eqnarray*}
  d_id_j
   &      \stackrel{\scriptscriptstyle{\eqref{orzo}}}{=} &
  (\id\otimes \mu^c\otimes \mu^c)\circ (\Delta_{n-2,2,i}\otimes \id)\circ \Delta_{n-1,2,j} \\
   &      \stackrel{\scriptscriptstyle{}}{=} &
  (\id\otimes \mu^c\otimes \mu^c)\circ (\id\otimes \rho)\circ(\Delta_{n-2,2,j-1}\otimes\id)\circ\Delta_{n-1,2,i}
  \, = \, d_{j-1}d_i. 
\end{eqnarray*}
%
%
%
Identities of type \eqref{cosimp2} for the degeneracies also directly follow from the coassociativity relations \eqref{orzo}, 
\begin{eqnarray*}
s_is_j      
 &      \stackrel{\scriptscriptstyle{\eqref{pasta}}}{=} 
 & (\id\otimes e^c\otimes e^c)\circ (\Delta_{n+2,0,i+1}\otimes \id) \circ \Delta_{n+1,0,j+1} \\
&     \stackrel{\scriptscriptstyle{}}{=} 
 & (\id\otimes e^c\otimes e^c)\circ (\id\otimes \sigma) \circ (\Delta_{n+2,0,j+1}\otimes \id)\circ \Delta_{n+1,0,i} \, = \,  s_js_{i-1},
\end{eqnarray*}
%
as do 
the first and third type of identities in \eqref{cosimp3}: for example, if $0<i<n$ and $j>i$, we have:
\begin{eqnarray*}
  d_is_j
 &      \stackrel{\scriptscriptstyle{\eqref{orzo2}}}{=} &
  (\id \otimes \mu^c \otimes e^c)\circ (\Delta_{n,2,i}\otimes \id)\circ \Delta_{n+1,0,j+1} \\
  &      \stackrel{\scriptscriptstyle{}}{=} &
  (\id\otimes e^c\otimes \mu^c)\circ (\Delta_{n,0,j}\otimes \id)\circ \Delta_{n-1,2,i} \, = \,  s_{j-1}d_i.
\end{eqnarray*}
%
%
%
%
%
Finally, for the second type of identities in \eqref{cosimp3}, we exploit coassociativity and counitality of $e^c$ in $\cc^*$. For example:
\begin{eqnarray*}
  d_0s_0  &      \stackrel{\scriptscriptstyle{\eqref{orzo2}}}{=} &
  (\mu^c\otimes \id \otimes e^c)\circ (\Delta_{2,n,2}\otimes \id)\circ \Delta_{n+1,0,1}
  \\
   &      \stackrel{\scriptscriptstyle{}}{=} &
  (\mu^c\otimes e^c \otimes \id)\circ (\Delta_{2,0,1} \otimes \id)\circ \Delta_{1,n,1} \\
   &      \stackrel{\scriptscriptstyle{\eqref{polentaunita}}}{=} &
  ((\mu^c\circ_1 e^c)\otimes \id )\circ \Delta_{1,n,1}  \\
 &      \stackrel{\scriptscriptstyle{}}{=} &
  (\mathbbm{1}^c\otimes \id ) \circ \Delta_{1,n,1}= \id. 
\end{eqnarray*}
%
All other cases are similar.
We have therefore verified that the data of $(\cC_\bull, d_\bull,s_\bull)$ defines a simplicial object in $\kmod$, as wanted.
The statement about the map $d$ then follows from these simplicial identities in a standard way.
\end{proof}

   As a consequence, and dual to the case of multiplicative operads, the comultiplicative structure on a cooperad induces a homology theory, and we can define:

\begin{deff}
Let $\cC$ be a cooperad with comultiplication. The homology groups
  $$
  H_\bull(\cC) := H\big(\cC_\bull, d\big),
  $$
  where the differential $d$ arises as in Proposition \ref{sugo}, 
  will be termed the homology of $\cC$ {\em induced by its comultiplication}. 
\end{deff}

\quad
\
\quad

\begin{example}
Visually, one can represent the outer face $d_0$ on $\cC(4)$ via the following tree:

\vspace*{-.1cm}

  \begin{center}
\begin{tikzpicture}[scale=.85, yscale=1, line width=0.7pt] 


\node(Z) at (-7.75,-1){};
\node(z) at (-7.75,-2){$\bullet$};
\node(1) at (-8.50,-3){};
\node(2) at (-8,-3){};
\node(4) at (-7.5,-3) {};
\node(5) at (-7,-3){};

\draw(Z)--(z);
\draw(z)--(1);
\draw(z)--(2);
\draw(z)--(4);
\draw(z)--(5);
 
\node(Freccia) at (-6.25,-2.25){$\xmapsto{ d_0 }$};

\node(A) at (-4.5, -1){};
\node(a) at (-4.5, -2){$\bullet$};
\node(6) at (-5.25,-2.5){};
\node(7) at (-3.75,-2.5){};
\node(7a) at (-3,-3){$\bullet$};
\node(c) at (-4.5,-3){$\mu^c$};
 \node(8) at (-4.5,-4){};
 \node(9) at (-3,-4){};
 \node(10) at (-3.5,-4){};
 \node(11) at (-2.5,-4){};

 \draw(A)--(a);
\draw(a)--(7.center);
\draw(a)--(6.center);
\draw(6.center)--(c);
 \draw(7.center)--(c);
 \draw[[-](8)--(c);
 \draw(7a)--(9);
 \draw(7a)--(10);
 \draw(7a)--(11);
 \draw(7a)--(7.center);

\node(Uguale) at (-2.15,-2.5){$=$};

\node(A) at (-.5, -1){};
\node(a) at (-.5, -2){$\bullet$};
\node(6) at (-1.25,-2.5){};
\node(7) at (0.25,-2.5){};
\node(c) at (-.5,-3){$\mu^c$};
\node(8) at (-.5,-4){};

\draw(A)--(a);
\draw(a)--(7.center);
\draw(a)--(6.center);
\draw(6.center)--(c);
\draw(7.center)--(c);
\draw[[-](8)--(c);

\node(Tensore) at (1,-2.5){$\otimes$};

\node(B) at (2,-1){};
\node(c) at (2,-2){$\bullet$};
\node(7) at (2,-3){};
\node(8) at (2.5,-3){};
\node(9) at (1.50,-3){};

\draw(B)--(c);
\draw(c)--(7);
\draw(c)--(8);
\draw(c)--(9);

 \end{tikzpicture}

\vspace*{-.2cm}

\captionof{figure}{The outer face $d_0$ on $\cC(4)$.}
\label{d0}

  \end{center}
 
\noindent Similar graphical representations can be drawn for the remaining faces on $\cC(4)$, and hence the differential $d$ can be pictured as an alternate sum of trees multiplied by a scalar:

\vspace*{-.1cm}

  \begin{center}
\begin{tikzpicture}[scale=.85, yscale=1, line width=0.7pt] 


\node(Z) at (-7.75,-1){};
\node(z) at (-7.75,-2){$\bullet$};
\node(1) at (-8.50,-3){};
\node(2) at (-8,-3){};
\node(4) at (-7.5,-3) {};
\node(5) at (-7,-3){};

\draw(Z)--(z);
\draw(z)--(1);
\draw(z)--(2);
\draw(z)--(4);
\draw(z)--(5);
 
\node(Freccia) at (-6.25,-2.25){$\xmapsto{\, d \, }$};

\node(A) at (-4.5, -1){};
\node(a) at (-4.5, -2){$\bullet$};
\node(6) at (-5.25,-2.5){};
\node(7) at (-3.75,-2.5){};
\node(7a) at (-3,-3){$\bullet$};
\node(c) at (-4.5,-3){$\mu^c$};
 \node(8) at (-4.5,-6){};
 \node(9) at (-3,-3.8){};
 \node(10) at (-3.5,-3.8){};
 \node(11) at (-2.5,-3.8){};

 \draw(A)--(a);
\draw(a)--(7.center);
\draw(a)--(6.center);
\draw(6.center)--(c);
 \draw(7.center)--(c);
 \draw[[-](8)--(c);
 \draw(7a)--(9);
 \draw(7a)--(10);
 \draw(7a)--(11);
 \draw(7a)--(7.center);

\node(meno1) at (-2.25,-2.5){$-$};

\node(Z1) at (-.75,-1){};
\node(z1) at (-.75,-2){$\bullet$};
\node(11) at (-1.50,-3.8){$\bullet$};
\node(21) at (-.75,-3.8){};
\node(41) at (0,-3.8) {};
\node(51) at (-2,-4.35){};
\node(61) at (-1,-4.35){};
\node(71) at (-1.5,-5){$\mu^c$};
\node(81) at (-1.5,-6){};

\draw(Z1)--(z1);
\draw(z1)--(11);
\draw(z1)--(21);
\draw(z1)--(41);
\draw(51.center)--(11);
\draw(61.center)--(11);
\draw(51.center)--(71);
\draw(61.center)--(71);
\draw[[-](81)--(71);

\node(piu1) at (0.25,-2.5){$+$};

\node(Z2) at (1.25,-1){};
\node(z2) at (1.25,-2){$\bullet$};
\node(12) at (1.25,-3.8){$\bullet$};
\node(22) at (.75,-3.8){};
\node(42) at (1.75,-3.8) {};

\node(52) at (.75,-4.35){};
\node(62) at (1.75,-4.35){};
\node(72) at (1.25,-5){$\mu^c$};
\node(82) at (1.25,-6){};

\draw(Z2)--(z2);
\draw(z2)--(12);
\draw(z2)--(22);
\draw(z2)--(42);

\draw(52.center)--(12);
\draw(62.center)--(12);
\draw(52.center)--(72);
\draw(62.center)--(72);
\draw[[-](82)--(72);

\node(meno2) at (2.25,-2.5){$-$};
\node(Z1) at (3.25,-1){};
\node(z1) at (3.25,-2){$\bullet$};
\node(11) at (4,-3.8){$\bullet$};
\node(21) at (3.25,-3.8){};
\node(41) at (2.5,-3.8) {};
\node(51) at (4.5,-4.35){};
\node(61) at (3.5,-4.35){};
\node(71) at (4,-5){$\mu^c$};
\node(81) at (4,-6){};

\draw(Z1)--(z1);
\draw(z1)--(11);
\draw(z1)--(21);
\draw(z1)--(41);
\draw(51.center)--(11);
\draw(61.center)--(11);
\draw(51.center)--(71);
\draw(61.center)--(71);
\draw[[-](81)--(71);

\begin{scope}[yscale=1,xscale=-1]

  \node(piu2) at (-4.9,-2.5){$+$};
  
\node(A) at (-7.25, -1){};
\node(a) at (-7.25, -2){$\bullet$};
\node(6) at (-8,-2.5){};
\node(7) at (-6.5,-2.5){};
\node(7a) at (-5.75,-3){$\bullet$};
\node(c) at (-7.25,-3){$\mu^c$};
 \node(8) at (-7.25,-6){};
 \node(9) at (-5.75,-3.8){};
 \node(10) at (-6.25,-3.8){};
 \node(11) at (-5.25,-3.8){};

 \draw(A)--(a);
\draw(a)--(7.center);
\draw(a)--(6.center);
\draw(6.center)--(c);
 \draw(7.center)--(c);
 \draw[[-](8)--(c);
 \draw(7a)--(9);
 \draw(7a)--(10);
 \draw(7a)--(11);
 \draw(7a)--(7.center);

 \end{scope}

\end{tikzpicture}

\vspace*{-.2cm}

\captionof{figure}{The differential $d$ on $\cC(4)$.}
\label{d}

 \end{center}

  \noindent
At first sight, one might be tempted to think that in Figure \ref{d} the second to fourth summand seem to produce trees in $\cC(2)$ instead of $\cC(3)$, but this is an illusion: the element $\mu^c$ kills the lower vertex resp.\ the lower tree but not the respective first, second, or third branch of the respective upper tree. 

\end{example}

\begin{rmk}
  We can express $d$ in terms of the cobracket from \eqref{arancia}, that is,
  \begin{equation}
    \label{olio}
    d \coloneqq 
    \overline{\mu}^c\circ \{-\}_n\colon \, \cc(n) \rightarrow \cc(n-1),
  \end{equation}
  where $\overline{\mu}^c$ is the degree $-2$ morphism $\overline{\mu}^c\colon \cc\otimes \cc\to \cc$ defined as 
\begin{equation}
  \label{pepe}
\overline{\mu}^c (x\otimes y) = \begin{cases}
 \mu^c(x)y & \text{if} \ \text{deg}(x)=2, \\
0 & \text{otherwise.}
\end{cases}
  \end{equation}
  %
  Combining Eqs.~\eqref{sale} and \eqref{pepe}, that is, the fact that expressions not in degree $2$ vanish, this can be explicitly rewritten as:
  \begin{equation}
    \label{olio1}
    d = (\mu^c \otimes \id) \circ \big(\gD_{2,n-1,2} + (-1)^n \gD_{2,n-1,1}\big)
    +
    \Sum^{n-1}_{i=1} (-1)^{i} (\id \otimes \mu^c) \circ \gD_{n-1,2,i}.
  \end{equation}
\end{rmk}
\noindent As a consequence, we obtain the following result:

\begin{coro}
  \label{sushi}
  For any comultiplicative cooperad $\cc$, its suspended complex $\big(\cc_\bullet[1], d, \{-\}\big)$ is a dg Lie coalgebra, where $\{-\}$ is given as in \eqref{arancia}. 
\end{coro}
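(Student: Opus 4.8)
\noindent The plan is to derive the statement by combining Proposition \ref{lunapop} with Proposition \ref{pesce}, once one further ingredient is in place: that the differential $d$ of Proposition \ref{sugo} is a coderivation for the canonical cobrace $[-]$ of \eqref{sale}. Granting this, the argument is essentially formal. Proposition \ref{lunapop} gives the pre-coJacobi identity for $[-]$, so together with the coderivation property the triple $\big(\cc_\bullet,d,[-]\big)$ is a dg pre-Lie coalgebra in the sense of Definition \ref{briciole1}; Proposition \ref{pesce} then promotes it to a dg Lie coalgebra whose cobracket is the $\tau$-symmetrisation $(\id-\tau)\circ[-]$. The canonical cobracket \eqref{arancia} is instead $\{-\}=(\id-\rho)\circ[-]$, built from the \emph{desuspended} flip $\rho$ of \eqref{piadina} rather than the graded flip $\tau$; this is exactly the discrepancy already flagged after Definition \ref{briciole2}, and it is absorbed by passing to the suspended complex $\cc_\bullet[1]$. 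On $\cc_\bullet[1]$ the relevant degrees are shifted by one, so that, by the very sign conventions in \eqref{piadina}, the desuspended flip $\rho$ on $\cc_\bullet$ becomes the graded flip $\tau$ on $\cc_\bullet[1]$, the map $\{-\}$ becomes the $\tau$-symmetrisation to which Proposition \ref{pesce} applies, and $d$ remains a differential. Coantisymmetry of $\{-\}$ is then immediate from $\Ima(\{-\})\subseteq\Ima(\id-\rho)=\Ima(\id-\tau)$ on the shifted complex, and the coJacobi identity is supplied verbatim by Proposition \ref{pesce}; hence $\big(\cc_\bullet[1],d,\{-\}\big)$ is a dg Lie coalgebra.

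\noindent Thus the real content of the proof is the coderivation identity
\[
[-]\circ d=\big(d\otimes\id+\id\otimes d\big)\circ[-]
\]
(written with Koszul signs, the unsigned version being $\big(d\otimes\id+\tau\circ(d\otimes\id)\circ\tau\big)\circ[-]$ as in Definition \ref{briciole1}). I would establish it by a direct computation in the style of the proof of Proposition \ref{lunapop}: expand $d$ via \eqref{olio1} and $[-]$ via the totalisation \eqref{sale2}, then repeatedly apply the coassociativity relations \eqref{orzo}--\eqref{orzo4} together with the (co)associativity \eqref{polenta} and (co)unitality \eqref{polentaunita} of $\mu^c$; alternatively one can start from $d=\overline{\mu}^c\circ\{-\}$ as in \eqref{olio} and feed in the pre-coJacobi identity. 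Conceptually this mirrors, dually, the way the Hochschild-type differential (``bracket with the multiplication'') interacts with the Gerstenhaber pre-Lie product of an operad with multiplication. Concretely, as in the remark preceding Proposition \ref{lunapop}, one sorts the resulting three-vertex trees into those lying along a single root-to-leaf path — where \eqref{polenta} collapses the two $\mu^c$-vertices — and those with two independent branches, and matches terms.

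\noindent The step I expect to be the main obstacle is precisely this sign-and-index bookkeeping for the coderivation identity: one has to track the exponents $(q-1)(p-i)$ coming from \eqref{sale}, the factors $(-1)^i$ and $(-1)^n$ coming from $d$, and which of the three cases of \eqref{orzo} governs each composite, just as in the proof of Proposition \ref{lunapop}. A secondary point requiring care is the suspension bookkeeping — verifying that the degree shift really does identify $\rho$ on $\cc_\bullet$ with $\tau$ on $\cc_\bullet[1]$ and that $d$ stays a differential of degree $-1$ — together with the observation, already made inside the proof of Proposition \ref{pesce}, that a coderivation for $[-]$ remains a coderivation for its symmetrisation because $(d\otimes\id)\circ\rho=\rho\circ(\id\otimes d)$ once the Koszul signs are reinstated.
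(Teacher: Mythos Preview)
Your overall architecture (reduce to the coderivation property and invoke Propositions \ref{lunapop} and \ref{pesce}, handling the $\tau$ vs.\ $\rho$ discrepancy via the suspension) is correct. The gap is in the central claim you isolate as ``the real content'': the identity
\[
[-]\circ d \;=\; (d\otimes\id+\id\otimes d)\circ[-]
\]
is \emph{false} for the cobrace. This is precisely dual to the classical fact that, for an operad with multiplication, the Hochschild-type differential is a derivation of the Gerstenhaber \emph{bracket} but \emph{not} of the pre-Lie brace; the defect is the graded commutator of the cup product. In this paper the dual statement appears as Proposition~\ref{farro}: the failure of $d$ to be a coderivation for $[-]$ equals $\cup^c_{\coop}-\cup^c$, which is nonzero on chains. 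So your proposed direct tree-by-tree computation would not close up, and the transfer step ``coderivation for $[-]$ $\Rightarrow$ coderivation for $\{-\}$'' from Proposition~\ref{pesce} cannot be invoked because its hypothesis does not hold.

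The paper avoids this by proving the coderivation property \emph{directly for the cobracket} $\{-\}$, not for $[-]$. The key input is exactly the formula you mention as an alternative, $d=\overline{\mu}^c\circ\{-\}$ from \eqref{olio}, but combined with the \emph{coJacobi} identity for $\{-\}$ (already available by Propositions~\ref{lunapop} and~\ref{pesce}) rather than the pre-coJacobi identity for $[-]$. One writes
\[
\{-\}\circ d \;=\; (\mu^c\otimes\id^{\otimes 2})\circ(\id\otimes\{-\})\circ\{-\},
\]
and then uses coantisymmetry $\rho\circ\{-\}=-\{-\}$ together with \eqref{sinistradestra} to rewrite both $(d\otimes\id)\circ\{-\}$ and $\rho\circ(d\otimes\id)\circ\rho\circ\{-\}$ as the remaining two cyclic terms of the coJacobi identity (each composed on the left with $\mu^c\otimes\id^{\otimes 2}$). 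The coderivation identity for $\{-\}$ is then literally the coJacobi identity for $\{-\}$ with $\mu^c$ contracted into the first slot; no new tree computation is needed. Your suspension bookkeeping in the second paragraph is fine; what needs to change is the target of the coderivation argument.
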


\begin{proof}
  Observe first that suspending from $\cC$ to $\cC[1]$ transforms the flip $\tau$ into $\rho$,
  which explains the use of \eqref{arancia} instead of \eqref{arancia2} for the cobracket here, which is a question of signs.
  If we check  that the differential $d$ is a coderivation with respect to the cobracket $\{-\} = (\id - \rho) \circ [-]$, then we can directly conclude thanks to Propositions \ref{pesce} \& \ref{lunapop}.
One needs to check that 
\begin{equation*}
\{-\}\circ d = \big(d\otimes \id + \rho \circ (d \otimes \id) \circ \rho \big) \circ \{-\}.
\end{equation*}
on the suspended complex.
This follows from the coJacobi identity and the fact that
the cobracket is the coantisymmetrisation of the cobrace so that, in particular, $\rho \circ \{-\}= - \{-\}$.
Indeed, the left hand side can be written as:
\begin{equation*}
\{-\}\circ d  = (\mu^c\otimes \id\otimes \id)\circ \big(\id\otimes \{-\} \big)\circ \{-\}.
\end{equation*}
By using Eq.~\eqref{sinistradestra}, the first addendum on the right hand side writes as 
\begin{align*}
  (d\otimes \id)\circ \{-\} &= (\mu^c\otimes  \id^{\otimes 2})
  \circ (\{-\} \otimes \id ) \circ \{-\} \\
  &= (\mu^c\otimes  \id^{\otimes 2} )
\circ(\id\otimes\rho)\circ(\rho\otimes\id)\circ(\id\otimes\{-\})\circ\rho\circ\{-\}\\
  &=  - (\mu^c\otimes  \id^{\otimes 2})
  \circ(\id\otimes\rho)\circ(\rho\otimes\id)\circ(\id\otimes\{-\})
  \circ\{-\}.
\end{align*}
Similarly, the second addendum on the right can also be rewritten to give the third term of the coJacobi identity, {\em i.e.}:
\begin{equation*}
  \begin{split}
  \rho \circ (d\otimes \id) \circ \rho \circ \{-\} &
  = \rho \circ (\mu^c\otimes \id^{\otimes 2})\circ (\{-\} \otimes \id)\circ \rho \circ \{-\}
  \\
  & =
 (\mu^c\otimes \id^{\otimes 2})\circ (\id \otimes \rho) \circ (\{-\} \otimes \id)\circ \rho \circ \{-\}
\\
  & =
  (\mu^c\otimes \id^{\otimes 2})\circ (\rho \otimes \id) \circ (\id \otimes \{-\}) \circ \{-\}
  \\
  & =
  - (\mu^c\otimes \id^{\otimes 2} )\circ (\rho\otimes \id)\circ (\id\otimes \rho)\circ (\id\otimes \{-\})\circ \{-\},
%
%
\end{split}
  \end{equation*}
where we used \eqref{sinistradestra} in line three.
Thus, we have proven that the differential is a coderivation with respect to the cobracket, as claimed.
  \end{proof}

\section{The cup coproduct}

\reversemarginpar
The customary cup product arising in the operadic setting in presence of a multiplication, and which is shown to be equivalent to the Yoneda product in the corresponding context, can be dualised in the following way:

\begin{deff}
  Let $\cC$ be a cooperad with comultiplication.
  The map 
  $$
  \cup^c \colon
  \cC(n) \to \bigoplus_{j=1}^{n+1} \cC(j-1) \otimes \cC(n+1-j)
  $$
  given by
\begin{equation}
  \label{aceto}
\begin{split}
  \cup^c x &:=
\Sum^{n+1}_{j=1} (\mu^c \otimes \id^{\otimes 2}) \circ (\Delta_{2, j-1, 1} \otimes \id) \circ \Delta_{j, n+1-j, j} (x)
\end{split}
\end{equation}
for $x \in \cC(n)$ will be called the
{\em cup coproduct}.
\end{deff}

\begin{example}
  In the spirit of the figures above, the cup coproduct of an element $\cc(3)$ can be represented as follows:

\vspace*{-.1cm}
  
  \begin{center}
\begin{tikzpicture}[scale=.85, xscale=.85, line width=0.7pt] 

\node(Z) at (-7.5,.5){};
\node(z) at (-7.5,-.5){$\bullet$};
\node(a) at (-7.5,-2){};
\node(b) at (-8,-2){};
\node(c) at (-7,-2) {};

\draw(Z)--(z);
\draw(z)--(a.center);
\draw(z)--(b.center);
\draw(z)--(c.center);

  \node(Freccia) at (-6,-1){$\xmapsto{\, \cup^c}$};

\node(1) at (-4,1){};
\node(2) at (-4,0){$\bullet$};
\node(3) at (-4.5,-.5){};
\node(4) at (-3.5,-.5){};
\node(5) at (-5,-1) {};
\node(5a) at (-5,-1.5) {$\bullet$};
\node(6) at (-4,-1){$\mu^c$};
\node(7) at (-3,-1){};
\node(8) at (-3,-1.5){$\bullet$};
\node(10) at (-4,-2.25) {};
\node(11) at (-3.5,-2.25) {};
\node(12) at (-3,-2.25) {};
\node(13) at (-2.5,-2.25) {};

\draw(1)--(2);
\draw(2)--(3.center);
\draw(2)--(4.center);
\draw(3.center)--(5.center);
\draw(3.center)--(6);
\draw(4.center)--(6);
\draw(4.center)--(7.center);
\draw(8)--(7.center);
\draw(5a)--(5.center);
  \draw[[-](10.center)--(6);
    \draw(8)--(11.center);
    \draw(8)--(12.center);
    \draw(8)--(13.center);

    \node(piu1) at (-2,-1){$+$};

\node(1) at (-0,1){};
\node(2) at (-0,0){$\bullet$};
\node(3) at (-.5,-.5){};
\node(4) at (.5,-.5){};
\node(5) at (-1,-1) {};
\node(5a) at (-1,-1.5) {$\bullet$};
\node(6) at (-0,-1){$\mu^c$};
\node(7) at (1,-1){};
\node(8) at (1,-1.5){$\bullet$};
\node(9) at (-1,-2.25){};
\node(10) at (0,-2.25) {};
\node(11) at (.75,-2.25) {};
\node(13) at (1.25,-2.25) {};

\draw(1)--(2);
\draw(2)--(3.center);
\draw(2)--(4.center);
\draw(3.center)--(5.center);
\draw(3.center)--(6);
\draw(4.center)--(6);
\draw(4.center)--(7.center);
\draw(8)--(7.center);
\draw(9.center)--(5a);
\draw(5a)--(5.center);
\draw[[-](10.center)--(6);
    \draw(8)--(11.center);
    \draw(8)--(13.center);

    \node(piu2) at (2,-1){$+$};

\begin{scope}[yscale=1,xscale=-1]

\node(1) at (-8,1){};
\node(2) at (-8,0){$\bullet$};
\node(3) at (-8.5,-.5){};
\node(4) at (-7.5,-.5){};
\node(5) at (-9,-1) {};
\node(5a) at (-9,-1.5) {$\bullet$};
\node(6) at (-8,-1){$\mu^c$};
\node(7) at (-7,-1){};
\node(8) at (-7,-1.5){$\bullet$};
\node(9) at (-9,-2.25){};
\node(10) at (-8,-2.25) {};
\node(11) at (-7.5,-2.25) {};
\node(12) at (-7,-2.25) {};
\node(13) at (-6.5,-2.25) {};

\draw(1)--(2);
\draw(2)--(3.center);
\draw(2)--(4.center);
\draw(3.center)--(5.center);
\draw(3.center)--(6);
\draw(4.center)--(6);
\draw(4.center)--(7.center);
\draw(8)--(7.center);
\draw(5a)--(5.center);
  \draw[[-](10.center)--(6);
    \draw(8)--(11.center);
    \draw(8)--(12.center);
    \draw(8)--(13.center);

    \node(piu1) at (-6,-1){$+$};

\node(1) at (-4,1){};
\node(2) at (-4,0){$\bullet$};
\node(3) at (-4.5,-.5){};
\node(4) at (-3.5,-.5){};
\node(5) at (-5,-1) {};
\node(5a) at (-5,-1.5) {$\bullet$};
\node(6) at (-4,-1){$\mu^c$};
\node(7) at (-3,-1){};
\node(8) at (-3,-1.5){$\bullet$};
\node(9) at (-5,-2.25){};
\node(10) at (-4,-2.25) {};
\node(11) at (-3.25,-2.25) {};
\node(13) at (-2.75,-2.25) {};

\draw(1)--(2);
\draw(2)--(3.center);
\draw(2)--(4.center);
\draw(3.center)--(5.center);
\draw(3.center)--(6);
\draw(4.center)--(6);
\draw(4.center)--(7.center);
\draw(8)--(7.center);
\draw(9.center)--(5a);
\draw(5.center)--(5a);
  \draw[[-](10.center)--(6);
    \draw(8)--(11.center);
    \draw(8)--(13.center);

  \end{scope}

\end{tikzpicture}

\vspace*{-.2cm}


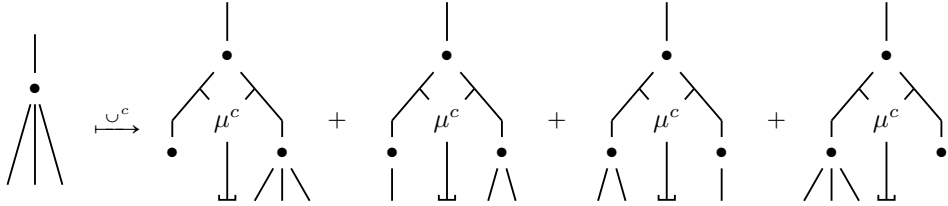
\captionof{figure}{The cup coproduct in terms of trees.}
\label{cup}

\end{center}

\noindent By writing the degrafting operation as a tensor product of trees, this can alternatively be depicted as:

\vspace*{-.1cm}

  \begin{center}
\begin{tikzpicture}[scale=.85, yscale=.9, line width=0.7pt] 

\node(Z) at (-7.5,-.5){};
\node(z) at (-7.5,-1.5){$\bullet$};
\node(a) at (-7.5,-2.75){};
\node(b) at (-8,-2.75){};
\node(c) at (-7,-2.75) {};

\draw(Z.center)--(z);
\draw(z)--(a.center);
\draw(z)--(b.center);
\draw(z)--(c.center);

  \node(Freccia) at (-6,-1.5){$\xmapsto{\, \cup^c}$};

\node(1) at (-5,-.5){};
\node(1a) at (-5, -2){$\bullet$};
\node(2) at (-3.75,-.5){};
\node(3) at (-2.5,-.5){};
\node(4) at (-4.5,-1.5){$\otimes$};
\node(4a) at (-3.75,-1){$\bullet$};
\node(5) at (-3,-1.5) {$\otimes$};
\node(5a) at (-2.5,-2){$\bullet$};
\node(6) at (-5,-2.75) {};
\node(7) at (-4.125,-1.5) {};
\node(8) at (-3.375,-1.5) {};
\node(9) at (-3,-2.75) {};
\node(10) at (-2.5,-2.75) {};
\node(11) at (-2,-2.75) {};
\node(12) at (-3.75,-2) {$\mu^c$};
\node(13) at (-3.75,-2.75) {};

\draw(1a)--(1.center);
  \draw(4a)--(2.center);
  \draw(4a)--(7.center);
   \draw(4a)--(8.center);
  \draw(12)--(7.center);
  \draw(12)--(8.center);
    \draw[[-](13.center)--(12);
      \draw(3.center)--(5a);
      \draw(9.center)--(5a);
      \draw(10.center)--(5a);
      \draw(11.center)--(5a);

    \node(piu1) at (-1.75,-1.5){$+$};

    \node(1) at (-1.25,-.5){};
    \node(1a) at (-1.25,-2){$\bullet$};
\node(2) at (0,-.5){};
\node(3) at (1.25,-.5){};
\node(4) at (-.75,-1.5){$\otimes$};
\node(4a) at (0,-1){$\bullet$};
\node(5) at (.75,-1.5) {$\otimes$};
\node(5a) at (1.25,-2){$\bullet$};
\node(6) at (-1.25,-2.75) {};
\node(7) at (-.375,-1.5) {};
\node(8) at (.375,-1.5) {};
\node(9) at (1,-2.75) {};
\node(11) at (1.5,-2.75) {};
\node(12) at (0,-2) {$\mu^c$};
\node(13) at (0,-2.75) {};

\draw(1a)--(1.center);
\draw(1a)--(6.center);
  \draw(4a)--(2.center);
  \draw(4a)--(7.center);
   \draw(4a)--(8.center);
  \draw(12)--(7.center);
  \draw(12)--(8.center);
    \draw[[-](13.center)--(12);
      \draw(3.center)--(5a);
      \draw(9.center)--(5a);
      \draw(11.center)--(5a);

    \node(piu2) at (1.875,-1.5){$+$};

\begin{scope}[yscale=1,xscale=-1]

  \node(1) at (-9,-.5){};
  \node(1a) at (-9, -2){$\bullet$};
\node(2) at (-7.75,-.5){};
\node(3) at (-6.5,-.5){};
\node(4) at (-8.5,-1.5){$\otimes$};
\node(4a) at (-7.75,-1){$\bullet$};
\node(5) at (-7,-1.5) {$\otimes$};
\node(5a) at (-6.5,-2){$\bullet$};
\node(6) at (-9,-2.75) {};
\node(7) at (-8.125,-1.5) {};
\node(8) at (-7.375,-1.5) {};
\node(9) at (-7,-2.75) {};
\node(10) at (-6.5,-2.75) {};
\node(11) at (-6,-2.75) {};
\node(12) at (-7.75,-2) {$\mu^c$};
\node(13) at (-7.75,-2.75) {};

\draw(1a)--(1.center);
  \draw(4a)--(2.center);
  \draw(4a)--(7.center);
   \draw(4a)--(8.center);
  \draw(12)--(7.center);
  \draw(12)--(8.center);
    \draw[[-](13.center)--(12);
      \draw(3.center)--(5a);
      \draw(9.center)--(5a);
      \draw(10.center)--(5a);
      \draw(11.center)--(5a);
 
\node(1) at (-5,-.5){};
\node(1a) at (-5,-2){$\bullet$};
\node(2) at (-3.75,-.5){};
\node(3) at (-2.5,-.5){};
\node(4) at (-3,-1.5){$\otimes$};
\node(4a) at (-3.75,-1){$\bullet$};
\node(5) at (-4.5,-1.5) {$\otimes$};
\node(5a) at (-2.5,-2){$\bullet$};
\node(6) at (-5,-2.75) {};
\node(7) at (-3.375,-1.5) {};
\node(8) at (-4.125,-1.5) {};
\node(9) at (-2.25,-2.75) {};
\node(11) at (-2.75,-2.75) {};
\node(12) at (-3.75,-2) {$\mu^c$};
\node(13) at (-3.75,-2.75) {};

\draw(1a)--(1.center);
\draw(6.center)--(1a);
  \draw(4a)--(2.center);
  \draw(4a)--(7.center);
   \draw(4a)--(8.center);
  \draw(12)--(7.center);
  \draw(12)--(8.center);
    \draw[[-](13.center)--(12);
      \draw(3.center)--(5a);
      \draw(9.center)--(5a);
      \draw(11.center)--(5a);

    \node(piu1) at (-5.625,-1.5){$+$};

     \end{scope}

 \end{tikzpicture}

\vspace*{-.2cm}


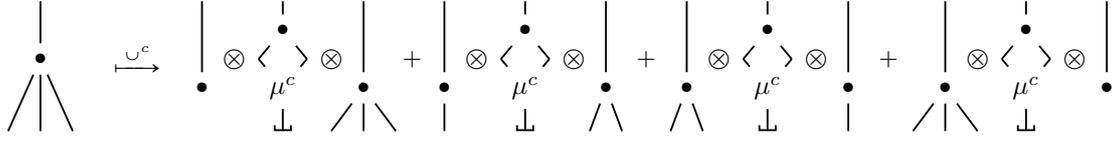
\captionof{figure}{The cup coproduct in terms of trees and tensor products.}
\label{cuptens}

\end{center}

\end{example}

\noindent Adding the
definition of the map
$ \overline{e}^c\colon \cC\to k $ by 
\begin{equation}
  \label{panforte}
\overline{e}^c(x)=
\begin{cases}
  e^c(x) & \text{if $x\in \cC(0)$,}
  \\
0 & \text{otherwise,}
\end{cases}
\end{equation}
for any cooperad with comultiplication, we can state:

\begin{prop}
  \label{zenzero}
  Let $(\cc,\psi)$ be a cooperad with comultiplication. Then
  $(\cc, \cup^c, d, \overline{e}^c)$ is
  a dg coassociative counital coalgebra.
\end{prop}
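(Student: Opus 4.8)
The statement asks for three things: that $\cup^c$ is coassociative, that $\overline{e}^c$ is a counit for it as well as a chain map (that is, $\overline{e}^c\circ d=0$), and that the differential $d$ of Proposition \ref{sugo} is a coderivation for $\cup^c$, meaning $\cup^c\circ d=(d\otimes\id+\id\otimes d)\circ\cup^c$ with the Koszul sign convention in force, as in the remark following Definition \ref{briciole1}. The plan is to dispatch the first two quickly and to invest the bulk of the proof in the coderivation property.

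For \emph{coassociativity}, I would expand $(\cup^c\otimes\id)\circ\cup^c$ and $(\id\otimes\cup^c)\circ\cup^c$ by means of \eqref{aceto}, turning each side into a double sum --- indexed by a first and a second degrafting point --- of composites of the shape $(\mu^c\otimes\mu^c\otimes\id^{\otimes 3})\circ(\text{three successive degraftings})$. For each fixed triple of output arities $(a,b,c)$ with $a+b+c=n$ I would then use the coassociativity relations \eqref{orzo}, here only the nested branches (those without a flip $\gs$), to bring the two inner degraftings into a canonical order, and afterwards the associativity \eqref{polenta} of $\mu^c$ to identify the two resulting scalars; after re-indexing the summation variables this matches the corresponding term of the other iterated coproduct. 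This is exactly the dual of the familiar fact that a cup/Yoneda product on an operad with multiplication is associative because the multiplication is.

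For \emph{counitality}: since $\overline{e}^c$ vanishes outside $\cC(0)$, applying $\overline{e}^c\otimes\id$ to \eqref{aceto} annihilates every summand except the one with $j=1$, which after reorganising is $(\mu^c\otimes e^c)\circ\Delta_{2,0,1}$ acting on the first tensor factor produced by $\Delta_{1,n,1}$. By the counitality \eqref{polentaunita} of $\mu^c$ with respect to $e^c$ this collapses to $\mathbbm{1}^c$, so that $(\overline{e}^c\otimes\id)\circ\cup^c=(\mathbbm{1}^c\otimes\id)\circ\Delta_{1,n,1}=\id_{\cC(n)}$ by the counit axiom $(\epsilon\otimes\id)\circ\Delta_{1,n,1}=\id_{\cC(n)}$; this is formally the same computation as $d_0s_0=\id$ in the proof of Proposition \ref{sugo}. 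The right-hand counitality $(\id\otimes\overline{e}^c)\circ\cup^c=\id$ is symmetric, using the $j=n+1$ summand, the other half of \eqref{polentaunita}, the counit axiom $(\id\otimes\epsilon)\circ\Delta_{n,1,i}=\id_{\cC(n)}$, and, if needed, a single rearrangement by \eqref{orzo}. That $\overline{e}^c$ is a chain map is then immediate: only $\cC(1)$ is relevant, where $d=(\mu^c\otimes\id)\circ(\Delta_{2,0,2}-\Delta_{2,0,1})$ by \eqref{olio1}, so that $\overline{e}^c\circ d=(\mu^c\otimes e^c)\circ\Delta_{2,0,2}-(\mu^c\otimes e^c)\circ\Delta_{2,0,1}=\mathbbm{1}^c-\mathbbm{1}^c=0$, again by \eqref{polentaunita}.

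For the \emph{coderivation property} I would write $d$ in the form \eqref{olio1}, namely as the alternating sum of the two external terms $(\mu^c\otimes\id)\circ\Delta_{2,n-1,2}$ and $(-1)^n(\mu^c\otimes\id)\circ\Delta_{2,n-1,1}$ together with the internal terms $(-1)^i(\id\otimes\mu^c)\circ\Delta_{n-1,2,i}$, and $\cup^c$ in the form \eqref{aceto}, then plug both into $\cup^c\circ d$ and into $(d\otimes\id+\id\otimes d)\circ\cup^c$ and compare. The composites that appear group naturally according to the position of the $d$-face relative to the cup degrafting point $j$: (a) the face lies strictly inside the left part $\cC(j-1)$; (b) it lies strictly inside the right part $\cC(n+1-j)$; (c) the face is external; and (d) the collision cases, where a binary degrafting coming from $d$ and the binary degrafting $\Delta_{2,j-1,1}$ coming from $\cup^c$ meet at a common vertex. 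In the cases (a)--(c) the coassociativity relations \eqref{orzo}--\eqref{orzo4} allow one to slide the two degraftings past one another, matching a $d$-face on $\cC(n)$ followed by a cup degrafting with a cup degrafting followed by the appropriate $d$-face acting on a tensor factor, the signs agreeing after the Koszul bookkeeping; these reproduce exactly the terms of $(d\otimes\id+\id\otimes d)\circ\cup^c$ coming from faces that do not touch the cup vertex. In the collision cases (d), two adjacent copies of $\mu^c$ get merged via the associativity \eqref{polenta}, or a composite $\mu^c\circ e^c$ collapses to $\mathbbm{1}^c=\epsilon$ via the unitality \eqref{polentaunita}, and the leftover boundary terms cancel in pairs --- this is the dual of the well-known fact that, in the operadic world, the Hochschild-type differential is a derivation of the cup product precisely because the multiplication is associative and unital. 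Assembling (a)--(d) yields the asserted identity. I expect step (d), together with the global sign- and index-bookkeeping, to be the main obstacle: nothing conceptual beyond coassociativity of the cooperadic $\Delta$ and the (co)associativity and unitality of $\mu^c$ enters, and the tree pictures of Figures \ref{cup}, \ref{cuptens} and \ref{d} make transparent why the cancellations occur, but writing everything out carefully still takes work; I would organise it along the lines of the detailed verification carried out in Proposition \ref{lunapop}, in the more streamlined style promised there.
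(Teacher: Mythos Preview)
Your outline is correct and matches the paper's proof essentially step for step: coassociativity via \eqref{orzo} and \eqref{polenta}, counitality via \eqref{polentaunita}, and the coderivation property by splitting into external versus internal faces, with the leftover boundary terms $(d_{\rm last}\otimes\id)\circ\cup^c$ and $(\id\otimes d_0)\circ\cup^c$ shown equal and cancelling in pairs --- you even add the check $\overline{e}^c\circ d=0$, which the paper leaves implicit. One small correction to your case (d): the counitality \eqref{polentaunita} plays no role there, since neither $d$ nor $\cup^c$ involves $e^c$; the collisions and the pairwise cancellation are handled entirely by \eqref{orzo} together with \eqref{polenta}.
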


\begin{proof}
  We need to show that $\cup^c$ is coassociative, {\em i.e.},
  \begin{equation}
    \label{fagioli}
(\cup^c \otimes \id) \circ \cup^c = (\id \otimes \cup^c) \circ \cup^c, 
  \end{equation}
  and that $d$ is a coderivation of $\cup^c$, that is,
  \begin{equation}
    \label{ceci}
\cup^c \circ \, d = (d \otimes \id) \circ \cup^c  + \tau \circ (d \otimes \id) \circ \tau \circ \cup^c,
  \end{equation}
  where
  $\tau$ is the graded flip as in \eqref{piadina}. Observe again that, by applying the Koszul sign rule, this can be equally rewritten as 
  \begin{equation}
    \label{ceci2}
\cup^c \circ \, d = (d \otimes \id + \id \otimes d) \circ \cup^c,
  \end{equation}
which, as said, in some situations might be more convenient (but not in this proof).
  
 
  Proving Eq.~\eqref{fagioli} essentially hinges on reordering the partial decompositions $\gD_{p,q,i}$ first, so as to have a decreasing chain of decompositions appearing as leftmost as possible, applying afterwards the cooperadic identities \eqref{orzo} step-by-step from right to left, using then the associativity \eqref{polenta} for the multiplication element $\mu^c \in \cC^*(2)$, and finally applying the cooperadic identities \eqref{orzo} again from left to right. Indeed, on $\cC(n)$ one computes:
    \begin{equation*}
    \begin{array}{rcl}
      &&    
     (\id \otimes \cup^c) \circ \cup^c
      \\[1mm]
      &
      \stackrel{\scriptscriptstyle{\eqref{aceto}}}{=}
      &
      \Sum^{n+1}_{j=1} \Sum^{n+2-j}_{i=1} (\mu^c \otimes \id \otimes \mu^c \otimes \id^{\otimes 2}) \circ (\id^{\otimes 2} \otimes \gD_{2,i-1,1} \otimes \id)
      \circ (\id^{\otimes 2} \otimes \gD_{i,n+2-j-i,i})
 \\[1mm]
& &
 \hspace*{7cm}
      \circ (\gD_{2,j-1,1} \otimes \id) \circ \gD_{j,n+1-j,j}
      \\[1mm]
      &
      \stackrel{\scriptscriptstyle{}}{=}
      &
      \Sum^{n+1}_{j=1} \Sum^{n+2-j}_{i=1} (\mu^c \otimes \id \otimes \mu^c \otimes \id^{\otimes 2}) \circ (\gD_{2,j-1,1} \otimes \id^{\otimes 3})
      \circ (\id \otimes \gD_{2,i-1,1} \otimes \id)
 \\[1mm]
& &
 \hspace*{7cm}
      \circ (\id \otimes \gD_{i,n+2-j-i,i}) \circ \gD_{j,n+1-j,j}
      \\[1mm]
      &
      \stackrel{\scriptscriptstyle{\eqref{orzo}}}{=}
      &
            \Sum^{n+1}_{j=1} \Sum^{n+2-j}_{i=1} (\mu^c \otimes \id \otimes \mu^c \otimes \id^{\otimes 2}) \circ (\gD_{2,j-1,1} \otimes \id^{\otimes 3})
      \circ (\id \otimes \gD_{2,i-1,1} \otimes \id)
 \\[1mm]
& &
 \hspace*{7cm}
      \circ (\gD_{j,i,j} \otimes \id) \circ \gD_{j+i-1,n+2-j-i,j+i-1}
      \\[1mm]
      &
      \stackrel{\scriptscriptstyle{\eqref{orzo}}}{=}
      &
            \Sum^{n+1}_{j=1} \Sum^{n+2-j}_{i=1} (\mu^c \otimes \id \otimes \mu^c \otimes \id^{\otimes 2}) \circ (\gD_{2,j-1,1} \otimes \id^{\otimes 3})
      \circ (\gD_{j,2,j} \otimes \id^{\otimes 2})
 \\[1mm]
& &
 \hspace*{7cm}
      \circ (\gD_{j+1,i-1,j} \otimes \id) \circ \gD_{j+i-1,n+2-j-i,j+i-1}
      \\[1mm]
      &
      \stackrel{\scriptscriptstyle{\eqref{orzo}}}{=}
      &
      \Sum^{n+1}_{j=1} \Sum^{n+2-j}_{i=1} (\mu^c \otimes \id \otimes \mu^c \otimes \id^{\otimes 2})
\circ (\id \otimes \gs \otimes \id^{\otimes 2})  
      \circ (\gD_{2,2,2} \otimes \id^{\otimes 3})
      \circ (\gD_{3,j-1,1} \otimes \id^{\otimes 2})
 \\[1mm]
& &
 \hspace*{7cm}
      \circ (\gD_{j+1,i-1,j} \otimes \id) \circ \gD_{j+i-1,n+2-j-i,j+i-1}
                \end{array}
    \end{equation*}

    
       \begin{equation*}
    \begin{array}{rcl}
      &
      \stackrel{\scriptscriptstyle{\eqref{polenta}}}{=}
      &
      \Sum^{n+1}_{j=1} \Sum^{n+2-j}_{i=1} (\mu^c \otimes \mu^c \otimes \id^{\otimes 3})  
      \circ (\gD_{2,2,1} \otimes \id^{\otimes 3})
      \circ (\gD_{3,j-1,1} \otimes \id^{\otimes 2})
 \\[1mm]
& &
 \hspace*{7cm}
      \circ (\gD_{j+1,i-1,j} \otimes \id) \circ \gD_{j+i-1,n+2-j-i,j+i-1}
      \\[1mm]
      &
      \stackrel{\scriptscriptstyle{\eqref{orzo}}}{=}
      &
      \Sum^{n+1}_{j=1} \Sum^{n+2-j}_{i=1} (\mu^c \otimes \mu^c \otimes \id^{\otimes 3})  
      \circ (\id \otimes \gD_{2,j-1,1} \otimes \id^{\otimes 2})
      \circ (\gD_{2,j,1} \otimes \id^{\otimes 2})
 \\[1mm]
& &
 \hspace*{7cm}
      \circ (\gD_{j+1,i-1,j} \otimes \id) \circ \gD_{j+i-1,n+2-j-i,j+i-1}
      \\[1mm]
      &
      \stackrel{\scriptscriptstyle{\eqref{orzo}}}{=}
      &
       \Sum^{n+1}_{j=1} \Sum^{n+2-j}_{i=1} (\mu^c \otimes \mu^c \otimes \id^{\otimes 3})  
      \circ (\id \otimes \gD_{2,j-1,1} \otimes \id^{\otimes 2})
      \circ (\id \otimes \gD_{j,i-1,j} \otimes \id)
 \\[1mm]
& &
 \hspace*{7cm}
      \circ (\gD_{2,j+i-2,1} \otimes \id) \circ \gD_{j+i-1,n+2-j-i,j+i-1}
      \\[1mm]
      &
      \stackrel{\scriptscriptstyle{}}{=}
      &
       \Sum^{n+1}_{j=1} \, \Sum^{n+1}_{i=j} (\mu^c \otimes \mu^c \otimes \id^{\otimes 3})  
      \circ (\id \otimes \gD_{2,j-1,1} \otimes \id^{\otimes 2})
      \circ (\id \otimes \gD_{j,i-j,j} \otimes \id)
 \\[1mm]
& &
 \hspace*{7cm}
      \circ (\gD_{2,i-1,1} \otimes \id) \circ \gD_{i,n+1-i,i}
      \\[1mm]
      &
      \stackrel{\scriptscriptstyle{}}{=}
      &
       \Sum^{n+1}_{i=1} \, \Sum^{i}_{j=1} (\mu^c \otimes \mu^c \otimes \id^{\otimes 3})  
      \circ (\id \otimes \gD_{2,j-1,1} \otimes \id^{\otimes 2})
      \circ (\id \otimes \gD_{j,i-j,j} \otimes \id)
 \\[1mm]
& &
 \hspace*{7cm}
      \circ (\gD_{2,i-1,1} \otimes \id) \circ \gD_{i,n+1-i,i}
      \\[1mm]
      &
      \stackrel{\scriptscriptstyle{\eqref{aceto}}}{=}
      &
      (\cup^c \otimes \id) \circ \cup^c,
      \end{array}
   \end{equation*}
  where in the third line from below we used re-indexing $i \mapsto i-j+1$ and in the next step the usual double sum yoga.  

Next, let us check counitality, that is, 
\begin{equation*}
  (\overline{e}^c\otimes \id) \circ \cup^c
  = (\id\otimes \overline{e}^c) \circ \cup^c=\id,
\end{equation*}
which follows directly from counitality of $\mu^c$ with respect to $e^c$ given by the structure of a cooperad with comultiplication.
Indeed, one has:
\begin{eqnarray*}
(\overline{e}^c \otimes \id)\circ \cup^c &=& \Sum_{j=1}^{n+1}(\overline{\mu}^c\otimes \overline{e}^c\otimes \id)\circ (\Delta_{2,j-1,1}\otimes \id)\circ \Delta_{j,n+1-j,j} \\
& = & (\mu^c\otimes e^c\otimes \id)\circ(\Delta_{2,0,1}\otimes \id)\circ \Delta_{1,n,1} \\
&\stackrel{\scriptscriptstyle{\eqref{polentaunita}}}{=} & (\mathbbm{1}^c\otimes \id)\circ \Delta_{1,n,1} = \id.  
\end{eqnarray*}
Similarly, by making use of the coassociativity laws \eqref{orzo}, one computes
\begin{eqnarray*}
(\id\otimes \overline{e}^c)\circ \cup^c & = & (\mu^c\otimes \id \otimes e^c)\circ (\Delta_{2,n,1}\otimes \id)\circ \Delta_{n+1,0,n+1} \\
& \stackrel{\scriptscriptstyle{\eqref{orzo2}}}{=} & (\mu^c\otimes e^c \otimes )\circ (\Delta_{2,0,2}\otimes \id)\circ \Delta_{1,n,1} \\
& \stackrel{\scriptscriptstyle{\eqref{polentaunita}}}{=} & (\mathbbm{1}^c\otimes \id)\circ \Delta_{1,n,1}=\id.
\end{eqnarray*}
Counitality is therefore proven as well.

Finally,   
proving Eq.~\eqref{ceci} is done by decomposing the identity into simplicial pieces, using the face operators from Eqs.~\eqref{riso1}--\eqref{riso3}. 
To start with, for the zeroth face map on $\cC(n)$ one has:
   \begin{equation*}
    \begin{array}{rcl}
      &&    
      \cup^c \circ d_0
      \\[1mm]
      &
      \stackrel{\scriptscriptstyle{\eqref{riso1}, \eqref{aceto}}}{=}
      &
      \Sum^{n}_{j=1} (\mu^c \otimes \mu^c \otimes \id^{\otimes 2}) \circ (\id \otimes \gD_{2,j-1,1} \otimes \id) \circ ( \id \otimes \gD_{j,n-j,j}) \circ \gD_{2,n-1,2}
\\[1mm]
      &
      \stackrel{\scriptscriptstyle{\eqref{orzo}}}{=}
      &
      \Sum^{n}_{j=1} (\mu^c \otimes \mu^c \otimes \id^{\otimes 2}) \circ (\id \otimes \gD_{2,j-1,1} \otimes \id) \circ (\gD_{2,j,2} \otimes \id) \circ \gD_{j+1,n-j,j+1}
      \\[1mm]
      &
      \stackrel{\scriptscriptstyle{\eqref{orzo}}}{=}
      &
  \Sum^{n}_{j=1} (\mu^c \otimes \mu^c \otimes \id^{\otimes 2}) \circ (\gD_{2,2,2} \otimes \id^{\otimes 2}) \circ (\gD_{3,j-1,2} \otimes \id) \circ \gD_{j+1,n-j,j+1}
\\[1mm]
      &
      \stackrel{\scriptscriptstyle{\eqref{polenta}}}{=}
      &
  \Sum^{n}_{j=1} (\mu^c \otimes \mu^c \otimes \id^{\otimes 2}) \circ (\gD_{2,2,1} \otimes \id^{\otimes 2}) \circ (\gD_{3,j-1,2} \otimes \id) \circ \gD_{j+1,n-j,j+1}
  \\[1mm]
    &
      \stackrel{\scriptscriptstyle{\eqref{orzo}}}{=}
      &
  \Sum^{n}_{j=1} (\mu^c \otimes \mu^c \otimes \id^{\otimes 2}) \circ (\id \otimes \gD_{2,j-1,2} \otimes \id) \circ (\gD_{2,j,1} \otimes \id) \circ \gD_{j+1,n-j,j+1}
            \end{array}
    \end{equation*}

    
       \begin{equation*}
    \begin{array}{rcl}
     &
      \stackrel{\scriptscriptstyle{}}{=}
      &
  \Sum^{n+1}_{j=2} (\mu^c \otimes \mu^c \otimes \id^{\otimes 2}) \circ (\id \otimes \gD_{2,j-2,2} \otimes \id) \circ (\gD_{2,j-1,1} \otimes \id) \circ \gD_{j,n-j+1,j}
  \\[1mm]
     &
      \stackrel{\scriptscriptstyle{\eqref{aceto}, \eqref{riso1}}}{=}
      &
(d_0 \otimes \id) \circ \cup^c,
      \end{array}
   \end{equation*}
   where in the penultimate step we just used re-indexing and in the last one the fact that $d_0 \equiv 0$ on $\cC(0)$.
For the last face map on $\cC(n)$, we compute
   \begin{equation*}
    \begin{array}{rcl}
      &&    
      \cup^c \circ d_n
      \\[1mm]
      &
      \stackrel{\scriptscriptstyle{\eqref{riso3}, \eqref{aceto}}}{=}
      &
      \Sum^{n}_{j=1} (\mu^c \otimes \mu^c \otimes \id^{\otimes 2}) \circ (\id \otimes \gD_{2,j-1,1} \otimes \id) \circ ( \id \otimes \gD_{j,n-j,j}) \circ \gD_{2,n-1,1}
\\[1mm]
      &
      \stackrel{\scriptscriptstyle{\eqref{orzo}}}{=}
      &
          \Sum^{n}_{j=1} (\mu^c \otimes \mu^c \otimes \id^{\otimes 2}) \circ (\id \otimes \gD_{2,j-1,1} \otimes \id) \circ (\gD_{2,j,1} \otimes \id) \circ \gD_{j+1,n-j,j}
\\[1mm]
      &
      \stackrel{\scriptscriptstyle{\eqref{orzo}}}{=}
      &
          \Sum^{n}_{j=1} (\mu^c \otimes \mu^c \otimes \id^{\otimes 2}) \circ (\gD_{2,2,1} \otimes \id^{\otimes 2}) \circ (\gD_{3,j-1,1} \otimes \id) \circ \gD_{j+1,n-j,j}
\\[1mm]
      &
      \stackrel{\scriptscriptstyle{\eqref{polenta}}}{=}
      &
      \Sum^{n}_{j=1} (\mu^c \otimes \id \otimes \mu^c \otimes \id)
\circ (\id \otimes \gs \otimes \id) 
      \circ (\gD_{2,2,2} \otimes \id^{\otimes 2}) \circ (\gD_{3,j-1,1} \otimes \id) \circ \gD_{j+1,n-j,j}
\\[1mm]
      &
      \stackrel{\scriptscriptstyle{\eqref{orzo}}}{=}
      &
          \Sum^{n}_{j=1} (\mu^c \otimes \id \otimes \mu^c \otimes \id)
\circ (\gD_{2,j-1,1} \otimes \id^{\otimes 2}) \circ (\gD_{j,2,j} \otimes \id) \circ \gD_{j+1,n-j,j}
\\[1mm]
      &
      \stackrel{\scriptscriptstyle{\eqref{orzo}}}{=}
      &
          \Sum^{n}_{j=1} (\mu^c \otimes \id \otimes \mu^c \otimes \id)
\circ (\gD_{2,j-1,1} \otimes \id^{\otimes 2}) \circ (\id \otimes \gD_{2,n-j,1}) \circ \gD_{j,n+1-j,j}
\\[1mm]
      &
      \stackrel{\scriptscriptstyle{}}{=}
      &
          \Sum^{n}_{j=1} (\mu^c \otimes \id \otimes \mu^c \otimes \id)
\circ (\id^{\otimes 2} \otimes \gD_{2,n-j,1}) \circ (\gD_{2,j-1,1} \otimes \id) \circ \gD_{j,n+1-j,j}
\\[1mm]
     &
      \stackrel{\scriptscriptstyle{\eqref{aceto}, \eqref{riso3}}}{=}
      &
(\id \otimes d_{\rm last}) \circ \cup^c,
      \end{array}
   \end{equation*}
   as the last face map of $\id \otimes d$ on $\cup^c x \in  \bigoplus_{j} \cC(j-1) \otimes \cC(n+1-j)$ is in degree $n+1-j$, and where we used again that $d_{\rm last} \equiv 0$ on $\cC(0)$, hence the summand for $j=n+1$ vanishes.
   As far as the signs are concerned, this expression has to be rewritten by backwards applying the Koszul sign rule, that is, for the $j^{\rm th}$ summand $\cup^c_j$ of $\cup^c$ in the above sum multiplied by the sign $(-1)^n$ of the last face $d_n$ on $\cC(n)$, we obtain by using the notation for the graded and ungraded flips from \eqref{piadina}:
   \begin{equation}
     \label{bretzel}
     \begin{split}
     \cup^c_j \circ    (-1)^n  d_n &= (-1)^n (\id \otimes d_{n+1-j}) \circ \cup^c_j
\\
       &=
(-1)^{n+1-j} (-1)^{j-1} \sigma \circ (d_{n+1-j} \otimes \id) \circ \sigma
\circ \cup^c_j
\\
&=
   (-1)^{n+1-j} (-1)^{(n-j)(j-1)} \sigma \circ (d_{n+1-j} \otimes \id) \circ  (-1)^{(n+1-j)(j-1)} \sigma \circ \cup^c_j
\\
&=
    \tau \circ \big((-1)^{n+1-j} d_{n+1-j} \otimes \id \big) \circ \tau \circ \cup^c_j
     \end{split}
     \end{equation}
on $\cC(j-1) \otimes \cC(n+1-j)$, as desired.
  Similarly, one proves 
   $
(\id \otimes d_0) \circ \cup^c = (d_{\rm last} \otimes \id) \circ \cup^c,
   $
  which we omit.
  As for the intermediate faces, or rather the sum over the $d_i$ from \eqref{riso2}, we first
   write $\cup^c \circ \sum^{n-1}_{i=1} d_i$ on $\cC(n)$
by using Eqs.~\eqref{riso2} \& \eqref{aceto}, and decompose it afterwards 
   into two (double) sums: 
   \begin{equation}
     \label{carote}
\hspace*{-.6cm}{
     \begin{array}{rcl}
      &&    
       \cup^c \circ \Sum^{n-1}_{i=1}
       d_i
      \\[1mm]
      &
      \stackrel{\scriptscriptstyle{}}{=}
      &
      \Sum^{n}_{j=1} \Sum^{n-1}_{i=1}
      (\mu^c \otimes \id^{\otimes 2} \otimes \mu^c) \circ (\gD_{2,j-1,1} \otimes \id^{\otimes 2}) \circ (\gD_{j,n-j,j} \otimes \id) \circ \gD_{n-1, 2,i}
\\[1mm]
      &
      \stackrel{\scriptscriptstyle{}}{=}
      &
      \pig(\Sum^{n}_{j=2} \Sum^{j-1}_{i=1} + \Sum^{n}_{j=1} \Sum^{n-1}_{i=j}\pig)
      (\mu^c \otimes \id^{\otimes 2} \otimes \mu^c) \circ (\gD_{2,j-1,1} \otimes \id^{\otimes 2})
      \circ (\gD_{j,n-j,j} \otimes \id) \circ \gD_{n-1,2,i}.
      \end{array}
}
   \end{equation}
   We show now that these two double sums correspond to, respectively, $(\sum_{i=1}^{\rm max-1} 
   d_i \otimes \id) \circ \cup^c$ and $(\id \otimes \sum_{i=1}^{\rm max-1}
   d_i) \circ \cup^c$; the alternating sign will be thought of afterwards.

   Indeed,
   \begin{equation*}
     \begin{array}{rcl}
      &&    
       \pig(\Sum_{i=1}^{\rm max-1}
       d_i \otimes \id\pig) \circ \cup^c
       \\[1mm]
      &
      \stackrel{\scriptscriptstyle{\eqref{aceto}, \eqref{riso2}}}{=}
      &
      \Sum^{n+1}_{j=3} \Sum^{j-2}_{i=1}
      (\mu^c \otimes \id \otimes \mu^c \otimes \id) \circ (\id \otimes \gD_{j-2,2,i} \otimes \id) \circ (\gD_{2,j-1,1} \otimes \id) \circ \gD_{j,n+1-j,j}
\\[1mm]
      &
      \stackrel{\scriptscriptstyle{}}{=}
      &
      \Sum^{n}_{j=2} \Sum^{j-1}_{i=1}
      (\mu^c \otimes \id \otimes \mu^c \otimes \id) \circ (\id \otimes \gD_{j-1,2,i} \otimes \id) \circ (\gD_{2,j,1} \otimes \id) \circ \gD_{j+1,n-j,j+1}
\\[1mm]
  &
      \stackrel{\scriptscriptstyle{\eqref{orzo}}}{=}
      &
      \Sum^{n}_{j=2} \Sum^{j-1}_{i=1}
      (\mu^c \otimes \id \otimes \mu^c \otimes \id) \circ (\gD_{2,j-1,1} \otimes \id^{\otimes 2}) \circ (\gD_{j,2,i} \otimes \id) \circ \gD_{j+1,n-j,j+1}
\\[1mm]
  &
      \stackrel{\scriptscriptstyle{\eqref{orzo}}}{=}
      &
      \Sum^{n}_{j=2} \Sum^{j-1}_{i=1}
      (\mu^c \otimes \id \otimes \mu^c \otimes \id) \circ (\gD_{2,j-1,1} \otimes \id^{\otimes 2}) \circ
(\id \otimes \gs) \circ
      (\gD_{j,n-j,j} \otimes \id) \circ \gD_{n-1,2,i}
\\[1mm]
  &
      \stackrel{\scriptscriptstyle{}}{=}
      &
      \Sum^{n}_{j=2} \Sum^{j-1}_{i=1}
      (\mu^c \otimes \id^{\otimes 2} \otimes \mu^c) \circ (\gD_{2,j-1,1} \otimes \id^{\otimes 2}) \circ
      (\gD_{j,n-j,j} \otimes \id) \circ \gD_{n-1,2,i},
     \end{array}
   \end{equation*}
   where in the second step we simply re-indexed $j \mapsto j-1$, and which is the first double sum in Eq.~\eqref{carote}.
It is clear that introducing the sign $(-1)^i$ here and in the first double sum in \eqref{carote} does not change matters.
On the other hand,    
\begin{equation*}
  \begin{array}{rcl}
      &&    
\pig(\id \otimes \Sum_{i=1}^{\rm max-1} d_i \pig) \circ \cup^c
      \\[1mm]
      &
      \stackrel{\scriptscriptstyle{\eqref{aceto}, \eqref{riso2}}}{=}
      &
      \Sum^{n}_{j=1} \Sum^{n-j}_{i=1} (\mu^c \otimes \id^{\otimes 2} \otimes \mu^c) \circ (\id^{\otimes 2} \otimes \gD_{n-j,2,i}) \circ (\gD_{2,j-1,1} \otimes \id) \circ \gD_{j,n+1-j,j}
\\[1mm]
      &
      \stackrel{\scriptscriptstyle{}}{=}
      &
    \Sum^{n}_{j=1} \Sum^{n-j}_{i=1} (\mu^c \otimes \id^{\otimes 2} \otimes \mu^c) \circ (\gD_{2,j-1,1} \otimes \id^{\otimes 2}) \circ (\id \otimes \gD_{n-j,2,i}) \circ \gD_{j,n+1-j,j}
\\[1mm]
       &
       \stackrel{\scriptscriptstyle{\eqref{orzo}}}{=}
       &
    \Sum^{n}_{j=1} \Sum^{n-j}_{i=1} (\mu^c \otimes \id^{\otimes 2} \otimes \mu^c) \circ (\gD_{2,j-1,1} \otimes \id^{\otimes 2}) \circ (\gD_{j, n-j,j} \otimes \id) \circ \gD_{n-1,2,i+j-1}
\\[1mm]
      &
      \stackrel{\scriptscriptstyle{}}{=}
      &
    \Sum^{n}_{j=1} \Sum^{n-1}_{i=j} (\mu^c \otimes \id^{\otimes 2} \otimes \mu^c) \circ (\gD_{2,j-1,1} \otimes \id^{\otimes 2}) \circ (\gD_{j, n-j,j} \otimes \id) \circ \gD_{n-1,2,i},
  \end{array}
 \end{equation*}
where in the second step we just reordered operations and in the fourth we re-indexed by $i \mapsto i+j-1$. This is the second double sum in  Eq.~\eqref{carote}. Introducing the sign $(-1)^i$, due to the re-indexing of $i$ just mentioned, we obtain from this for the $j^{\rm th}$ summand $\cup^c_j$ that
\begin{equation*}
  \begin{split}
    \cup^c_j \circ \Sum^{n-1}_{i=j} (-1)^i d_i
    &= \big( \id \otimes \Sum^{n-j}_{i=1} (-1)^{i+j-1} d_i \big) \circ \cup^c_j
    \\
    &= (-1)^{j-1} \gs \circ  \big( \Sum^{n-j}_{i=1} (-1)^{i} d_i \otimes \id \big) \circ \gs \circ \cup^c_j
    \\
    &= (-1)^{(n-j)(j-1)} \gs \circ  \big( \Sum^{n-j}_{i=1} (-1)^{i} d_i \otimes \id \big) \circ (-1)^{(n+1-j)(j-1)} \gs \circ \cup^c_j
    \\
    &= \tau \circ \big(\Sum^{n-j}_{i=1} (-1)^{i} d_i \otimes \id\big) \circ \tau \circ \cup^c_j,
    \end{split}
  \end{equation*}
similar to what was done in \eqref{bretzel}, so that the signs turn out correctly.
This finishes the proof that $d$ is a coderivation with respect to $\cup^c$, and hence concludes the proof of the proposition.
  \end{proof}

Next, let us show that the cup coproduct is graded cocommutative up to homotopy. More precisely, we have:

\begin{prop}
  \label{farro}
 With respect to the totalisation of all partial decompositions as in \eqref{sale} and the differential from \eqref{olio}, the homotopy formula
  \begin{equation}
    \label{radicchio}
    \cup^c_\coop - \,\, \cup^c =
[-] \circ d
+   
  \tau \circ \big( d \otimes \mathrm{id} + (-1)^n \mathrm{id} \otimes d 
    \big) \circ \tau \circ [-]
%
  \end{equation}
  holds on $\cC(n)$ for any $n \in \N$, where $\cup^c_\coop \coloneqq  \tau \circ \cup^c$ denotes the coopposite cup coproduct.
  \end{prop}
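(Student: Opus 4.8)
The plan is to read the homotopy formula \eqref{radicchio} as the dual of the classical fact that, in an operad with multiplication, the cup product is graded commutative up to the homotopy provided by the brace operation: the cobrace $[-]$ plays the role of that homotopy and $d$ supplies the differential. Concretely, I would prove \eqref{radicchio} on a fixed $\cC(n)$ by expanding both sides into sums of composites of the form $(\mu^c\text{-applications})\circ\Delta_{\bullet,\bullet,\bullet}\circ\Delta_{\bullet,\bullet,\bullet}$ and matching terms after repeated use of the coassociativity relations \eqref{orzo}--\eqref{orzo4} together with the coassociativity \eqref{polenta} of $\mu^c$.

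First I would unwind the left-hand side: by \eqref{aceto}, both $\cup^c$ and $\cup^c_\coop = \tau\circ\cup^c$ are single sums over $j$ of composites carrying exactly one copy of $\mu^c$ applied to an arity-$2$ tensor slot. For the right-hand side I would use the explicit form \eqref{olio1} of $d$ (in which $\mu^c$ only ever hits the degree-$2$ part) together with the expansion \eqref{sale2} of the cobrace, writing $[-]\circ d$ as a triple sum of terms $(\mu^c\text{-part})\circ\Delta_{a,b,k}\circ\Delta_{c,d,\ell}$ whose inner decomposition comes from $d$ (so that one of $c,d$ equals $2$) and whose outer one comes from $[-]$. Pushing each $\mu^c$ outwards past the outer $\Delta$ --- legitimate since $\mu^c$ acts on one arity-$2$ slot and commutes with decompositions of the other slots --- and normalising the composite $\Delta\circ\Delta$ via \eqref{orzo} to the form with the inner decomposition first, one obtains a sum split according to the three clauses of \eqref{orzo}. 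The same is done for $\tau\circ(d\otimes\id)\circ\tau\circ[-]$ and for $(-1)^n\,\tau\circ(\id\otimes d)\circ\tau\circ[-]$, where now the decomposition from $[-]$ is applied first and $d$ acts on one of the two resulting legs; the two flips $\tau$ are handled exactly as in \eqref{bretzel}, by reducing to the ungraded flip $\sigma$ and reinstating the Koszul signs at the very end.

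The combinatorial heart of the argument is then the cancellation pattern, entirely parallel to the operadic situation. Among all normalised terms, those corresponding to three-vertex trees whose last two vertices lie on independent branches (the right-hand type in the Remark following Proposition \ref{lunapop}) occur both in $[-]\circ d$ and in $\tau\circ(d\otimes\id + (-1)^n\,\id\otimes d)\circ\tau\circ[-]$ with opposite signs and cancel in pairs; likewise, the linear-chain three-vertex trees in which the $\mu^c$-corolla does \emph{not} sit at the root cancel among themselves. The only surviving contributions are those in which the arity-$2$ corolla carrying $\mu^c$ is grafted at the very root of the remaining tree, onto either its first or its second leaf; reading off \eqref{aceto}, these assemble precisely into $-\cup^c$ and $+\tau\circ\cup^c = \cup^c_\coop$, the ungraded flip $\sigma$ appearing on the third branch of \eqref{orzo} being exactly what, together with the Koszul signs carried by the $\tau$'s, produces the $(-1)^n$ in front of $\id\otimes d$. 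One may streamline the bulk cancellation by invoking the presentation $d = \overline{\mu}^c\circ\{-\}$ from \eqref{olio} together with the pre-coJacobi identity of Proposition \ref{lunapop}, much as in an operad the homotopy commutativity of the cup product is a formal consequence of the brace relations.

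The step I expect to be the main obstacle is the sign bookkeeping: one must check that the exponents $(q-1)(p-i)$ from \eqref{sale}, the signs $(-1)^i$ and $(-1)^n$ from $d$ in \eqref{olio1}, the Koszul signs created by each $\tau$ when passed through as in \eqref{bretzel}, and the signs picked up upon re-indexing under the three clauses of \eqref{orzo} all conspire so that the bulk terms cancel exactly and the two boundary terms come out with signs $-1$ and $+1$. As in the proofs of Propositions \ref{lunapop} and \ref{zenzero}, I would perform every cancellation first in the ungraded ($\sigma$-)world, using the same index manipulations employed there --- the double-sum interchange and the reflections $b\mapsto n+2-b-p$, $j\mapsto j+b-1$, and so on --- and only afterwards convert $\sigma$ into $\tau$, $\rho$ and the mixed flip; this is also where the $(-1)^n$ cleanly emerges. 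Since $d$ is, by \eqref{olio}, built out of the cobrace itself, these are precisely the identities that governed the pre-coJacobi computation, which is why the bulk terms are guaranteed to cancel.
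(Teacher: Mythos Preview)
Your plan is essentially the paper's own approach: expand every term on the right-hand side of \eqref{radicchio} as a sum of composites $\Delta\circ\Delta$ with a single $\mu^c$ applied, normalise via \eqref{orzo}, and chase cancellations until only the two ``$\mu^c$ at the root'' terms survive as $-\cup^c$ and $\cup^c_{\coop}$. The paper carries this out by brute force over roughly two pages, labelling the intermediate subsums $(1)$ through $(15)$ and matching them pairwise after re-indexing; it does \emph{not} invoke the pre-coJacobi identity as a shortcut. Your suggestion that $d=\overline{\mu}^c\circ\{-\}$ together with Proposition~\ref{lunapop} might streamline the bulk cancellation is appealing but would require its own argument, since pre-coJacobi concerns $[-]$ alone whereas here one tensor leg always carries a $\mu^c$, so the symmetry it provides is not directly the one you need. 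What you have written is a correct outline; the actual content of the proof is exactly the sign bookkeeping you flag as the main obstacle, and the paper offers no conceptual bypass around it.
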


\begin{proof}  This is a somewhat lengthy but nonetheless quite direct computation, in which we will compute all terms and compare them one by one. To start with, in degree $n$ on $\cC(n)$, one has:
\begin{equation*}
  \begin{array}{rcl}
      &&    
(-1)^{\foo n} \, \tau \circ (\id \otimes d) \circ \tau \circ [-]_n
      \\[1mm]
      &
      \stackrel{\scriptscriptstyle{\eqref{sale}, \eqref{olio}}}{=}
      &
      \Sum_{p+q=n+1} \Sum^{p}_{i=1}
(-1)^{\foo (q-1)(p-i)+1}
      (\mu^c \otimes \id^{\otimes 2}) \circ (\gD_{2,p-1,1} \otimes \id) \circ \gD_{p,q,i}
  \\[1mm]
      &&
\quad
      + \Sum_{p+q=n+1} \Sum^{p}_{i=1} (-1)^{\foo pn + i(q-1)+1} (\mu^c \otimes \id^{\otimes 2}) \circ (\gD_{2,p-1,2} \otimes \id) \circ \gD_{p,q,i}
      \\[1mm]
      &&
\quad
+ \Sum_{p+q=n+1} \Sum^{p}_{i=1} \Sum^{p-1}_{k=1}
(-1)^{\foo (q-1)(p-i)+k+1}
(\id \otimes \mu^c \otimes \id) \circ (\gD_{p-1,2,k} \otimes \id) \circ \gD_{p,q,i}
            \\[4mm]
      &
      \stackrel{\scriptscriptstyle{}}{=:}
      &
(1) + (2) + (3).
  \end{array}
 \end{equation*}
We have
\begin{equation*}
  \begin{array}{rcl}
    (1)
    &
      \stackrel{\scriptscriptstyle{}}{=}
      &
      \Sum_{p+q=n+1} \Sum^{p-1}_{i=1}
(-1)^{\foo (q-1)(p-i)+1}
(\mu^c \otimes \id^{\otimes 2}) \circ (\gD_{2,p-1,1} \otimes \id) \circ \gD_{p,q,i}
  \\[1mm]
      &&
\quad
+ \Sum_{p+q=n+1}
- (\mu^c \otimes \id^{\otimes 2}) \circ (\gD_{2,p-1,1} \otimes \id) \circ \gD_{p,q,p}
        \\[4mm]
      &
      \stackrel{\scriptscriptstyle{}}{=}:
      &
(4) + (5),
  \end{array}
 \end{equation*}
and by putting $q = n+1-p$, one sees that
\begin{equation*}
  \begin{array}{rcl}
    (5) &=& \Sum_{p+q=n+1}
  -  (\mu^c \otimes \id^{\otimes 2}) \circ (\gD_{2,p-1,1} \otimes \id) \circ \gD_{p,q,p}
    \\[1mm]
    &
      \stackrel{\scriptscriptstyle{}}{=}
      &
-    \Sum^{n+1}_{p=1} (\mu^c \otimes \id^{\otimes 2}) \circ (\gD_{2,p-1,1} \otimes \id) \circ \gD_{p,n+1-p,p}
\\[1mm]
    &
      \stackrel{\scriptscriptstyle{\eqref{aceto}}}{=}
      &
 - \,  \cup^c,
  \end{array}
 \end{equation*}
hence the cup coproduct. In a similar way, we deal with the term $(2)$. Indeed,
\begin{equation*}
  \begin{array}{rcl}
    (2)
    &
      \stackrel{\scriptscriptstyle{}}{=}
      &
      \Sum_{p+q=n+1} \Sum^{p}_{i=2} (-1)^{\foo pn + i(q-1)+1} (\mu^c \otimes \id^{\otimes 2}) \circ (\gD_{2,p-1,2} \otimes \id) \circ \gD_{p,q,i}
  \\[1mm]
      &&
\quad
      + \Sum_{p+q=n+1} (-1)^{\foo (p-1)q} (\mu^c \otimes \id^{\otimes 2}) \circ (\gD_{2,p-1,2} \otimes \id) \circ \gD_{p,q,1}
        \\[4mm]
      &
      \stackrel{\scriptscriptstyle{}}{=}:
      &
(6) + (7),
  \end{array}
 \end{equation*}
and let us show that $(7)$, in turn, equals minus the coopposite cup coproduct:
\begin{equation*}
  \begin{array}{rcl}
     \cup^c_\coop &=&  \tau \circ \cup^c
\\
    &
      \stackrel{\scriptscriptstyle{\eqref{aceto}}}{=}
      &
    \Sum^{n+1}_{p=1} \tau \circ (\mu^c \otimes \id^{\otimes 2}) \circ (\gD_{2,p-1,1} \otimes \id) \circ \gD_{p,n+1-p,p}
  \\[1mm]
      &
      \stackrel{\scriptscriptstyle{}}{=}
      &
      \Sum^{n+1}_{p=1}
(-1)^{\foo (p-1)(n+1-p)}
      (\mu^c \otimes \id^{\otimes 2}) \circ (\id \otimes \gs) \circ (\gD_{2,p-1,1} \otimes \id) \circ \gD_{p,n+1-p,p}
  \\[1mm]
      &
      \stackrel{\scriptscriptstyle{\eqref{orzo}}}{=}
      &
      \Sum^{n+1}_{p=1}
(-1)^{\foo (p-1)q}
      (\mu^c \otimes \id^{\otimes 2}) \circ (\gD_{2,n+1-p,2} \otimes \id) \circ \gD_{n+2-p,p-1,1}
  \\[1mm]
      &
      \stackrel{\scriptscriptstyle{}}{=}
      &
     \Sum^{n+1}_{p=1} (-1)^{\foo (p-1)q} (\mu^c \otimes \id^{\otimes 2}) \circ (\gD_{2,p-1,2} \otimes \id) \circ \gD_{p,n+1-p,1}
  \\[1mm]
      &
      \stackrel{\scriptscriptstyle{}}{=}
      &
     \Sum_{p+q=n+1} (-1)^{\foo (p-1)q} (\mu^c \otimes \id^{\otimes 2}) \circ (\gD_{2,p-1,2} \otimes \id) \circ \gD_{p,q,1}
  \\[4mm]
      &
      \stackrel{\scriptscriptstyle{}}{=}
      &
     (7),
  \end{array}
 \end{equation*}
where in line three from bottom we re-indexed $p \mapsto n+2-p$, which, however, does not change the sign.

Consider now
\begin{equation*}
  \begin{array}{rcl}
      &&    
[-]_{n-1} \circ d  
      \\[1mm]
      &
      \stackrel{\scriptscriptstyle{\eqref{sale}, \eqref{olio}}}{=}
      &
      \Sum_{p+q=n} \Sum^{p}_{i=1}
(-1)^{\foo (q-1)(p-i) + n}
      (\mu^c \otimes \id^{\otimes 2}) \circ (\id \otimes \gD_{p,q,i}) \circ \gD_{2,n-1,1}
  \\[1mm]
      &&
\quad
+      \Sum_{p+q=n} \Sum^{p}_{i=1}
(-1)^{\foo (q-1)(p-i)}
(\mu^c \otimes \id^{\otimes 2}) \circ (\id \otimes \gD_{p,q,i}) \circ \gD_{2,n-1,2}
      \\[1mm]
      &&
\quad
+     \Sum_{p+q=n} \Sum^{p}_{i=1} \Sum^{n-1}_{k=1}
(-1)^{\foo (q-1)(p-i) + k}
(\id^{\otimes 2} \otimes \mu^c) \circ (\gD_{p,q,i} \otimes \id) \circ \gD_{n-1,2,k}
            \\[4mm]
      &
      \stackrel{\scriptscriptstyle{}}{=:}
      &
(8) + (9) + (10),
  \end{array}
 \end{equation*}
and one sees that
\begin{equation*}
  \begin{array}{rcl}
      &&    
(8) + (9)
      \\[1mm]
      &
      \stackrel{\scriptscriptstyle{\eqref{orzo}}}{=}
      &
      \Sum_{p+q=n} \Sum^{p}_{i=1}
(-1)^{\foo (q-1)(p-i) + n}
      (\mu^c \otimes \id^{\otimes 2}) \circ
\pig(
(\gD_{2,p,1} \otimes \id) \circ \gD_{p+1,q,i}
\\[1mm]
&& \hspace*{8cm}
+ (-1)^{\foo n} (\gD_{2,p,2} \otimes \id) \circ \gD_{p+1,q,i+1}\pig)
      \\[1mm]
      &
      \stackrel{\scriptscriptstyle{}}{=}
      &
      \Sum_{p+q=n+1} \Sum^{p-1}_{i=1}
(-1)^{\foo (q-1)i + np}
      (\mu^c \otimes \id^{\otimes 2}) \circ
\pig(
(\gD_{2,p-1,1} \otimes \id) \circ \gD_{p,q,i}
\\[1mm]
&& \hspace*{8cm}
+ (-1)^{\foo n}
(\gD_{2,p-1,2} \otimes \id) \circ \gD_{p,q,i+1}\pig)
      \\[4mm]
      &
      \stackrel{\scriptscriptstyle{}}{=}
      &
- (4) - (6),
  \end{array}
 \end{equation*}
where we re-indexed $p \mapsto p+1$, and where for the second summand re-indexing $ i \mapsto i+1$ is still needed.
The reader having carefully followed the computation will be certainly aware of the fact that the only terms left at this stage are $(3)$, $(10)$, as well as $(-1)^{n+1} (\id \otimes d) \circ [-]_n$ in the homotopy identity \eqref{radicchio}. Hence, let us continue by
\begin{equation*}
  \begin{array}{rcl}
      &&    
\tau \circ (d \otimes \id) \circ \tau \circ [-]_n
      \\[1mm]
      &
      \stackrel{\scriptscriptstyle{\eqref{sale}, \eqref{olio}}}{=}
      &
      \Sum_{p+q=n+1} \Sum^{p}_{i=1}
(-1)^{\foo (q-1)(p-i) + p}
      (\id \otimes \mu^c \otimes \id)
\\[1mm]
&& \hspace*{4cm}
      \circ
\pig(
(-1)^{\foo q}
(\id \otimes \gD_{2,q-1,1}) \circ \gD_{p,q,i}
+ (\id \otimes \gD_{2,q-1,2}) \circ \gD_{p,q,i}\pig)
      \\[1mm]
      &
      &
\quad + 
\Sum_{p+q=n+1} \Sum^{p}_{i=1} \Sum^{q-1}_{k=1}
(-1)^{\foo (q-1)(p-i) + p +k}
(\id^{\otimes 2} \otimes \mu) \circ
(\id \otimes \gD_{q-1, 2, k}) \circ \gD_{p,q,i}
      \\[1mm]
      &
      \stackrel{\scriptscriptstyle{\eqref{orzo}}}{=}
      &
      \Sum_{p+q=n+1} \Sum^{p}_{i=1}
(-1)^{\foo (q-1)(p-i) + p}
      (\id \otimes \mu^c \otimes \id)
\\[1mm]
&& \hspace*{3.7cm}
      \circ
      \pig(
      (-1)^{\foo q}
(\gD_{p,2,i} \otimes \id) \circ \gD_{p+1,q-1,i}
+ (\gD_{p,2,i} \otimes \id) \circ \gD_{p+1,q-1,i+1}\pig)
      \\[1mm]
      &
      &
\quad + 
\Sum_{p+q=n+1} \Sum^{p}_{i=1} \Sum^{q-1}_{k=1}
(-1)^{\foo (q-1)(p-i) + p +k}
(\id^{\otimes 2} \otimes \mu) \circ
(\gD_{p, q-1, i} \otimes \id) \circ \gD_{p+q-2,2,k+i-1}
      \\[1mm]
      &
      \stackrel{\scriptscriptstyle{}}{=}
      &
      \Sum_{p+q=n+1} \Sum^{p-1}_{i=1}
(-1)^{\foo q(p-1-i) + p +1}
      (\id \otimes \mu^c \otimes \id)
\\[1mm]
&& \hspace*{4cm}
      \circ
\pig(
(-1)^{\foo q}
(\gD_{p-1,2,i} \otimes \id) \circ \gD_{p,q,i}
+ (\gD_{p-1,2,i} \otimes \id) \circ \gD_{p,q,i+1}\pig)
      \\[1mm]
      &
      &
\quad + 
\Sum_{p+q=n} \Sum^{p}_{i=1} \Sum^{q}_{k=1}
(-1)^{\foo q(p-i) + p  +k}
(\id^{\otimes 2} \otimes \mu) \circ
(\gD_{p, q, i} \otimes \id) \circ \gD_{n-1,2,k+i-1}
      \\[4mm]
      &
      \stackrel{\scriptscriptstyle{}}{=}:
      &
      (11) + (12) + (13),
  \end{array}
\end{equation*}
where in the penultimate step we re-indexed $p \mapsto p+1$ in the first two summands, and $q \mapsto q-1$ in all three. 
On the other hand, the (negative of the) double sums $(11)$ and $(12)$ are contained in the triple sum of $(3)$, and hence cancel out with the respective terms. Decomposing therefore $(3)$ and turning the sums around yields:

\begin{equation*}
  \begin{array}{rcl}   
&&
    (3) + (11) + (12)
\\[2mm]
    &
      \stackrel{\scriptscriptstyle{}}{=}
      &
      \pig(
      \Sum_{p+q=n+1} \Sum^{p-1}_{k=1} \Sum^{k-1}_{i=1}
      +
      \Sum_{p+q=n+1} \Sum^{p-1}_{k=1} \Sum^{p}_{i=k+2}
      \pig)
\\[3mm]
&& \hspace*{4cm}
           (-1)^{\foo (q-1)(p-i) + k+1}
      (\id \otimes \mu^c \otimes \id) \circ (\gD_{p-1,2,k} \otimes \id) \circ \gD_{p,q,i}
      \\[4mm]
      &
      \stackrel{\scriptscriptstyle{}}{=}:
      &
  (14) + (15).
  \end{array}
\end{equation*}
Re-indexing $k \mapsto k+i-1$ in $(13)$, we realise that the sum $(10)$ contains the sum $(13)$:  
\begin{equation*}
  \begin{array}{rcl}
      &&    
(10) + (13) 
\\[1mm]
    &
      \stackrel{\scriptscriptstyle{}}{=}
      &
      \pig(
      \Sum_{p+q=n} \Sum^{p}_{i=1} \Sum^{n-1}_{k=1}
      -
      \Sum_{p+q=n} \Sum^{p}_{i=1} \Sum^{q+i-1}_{k=i}
      \pig)
                 (-1)^{\foo (q-1)(p-i) + k}
      (\id^{\otimes 2} \otimes \mu^c) \circ (\gD_{p,q,i} \otimes \id) \circ \gD_{n-1,2,k}
\\[1mm]
    &
      \stackrel{\scriptscriptstyle{}}{=}
      &
      \pig(
      \Sum_{p+q=n} \Sum^{p}_{i=2} \Sum^{i-1}_{k=1}
      +
      \Sum_{p+q=n} \Sum^{p}_{i=1} \Sum^{n-1}_{k=q+i}
      \pig)
                       (-1)^{\foo (q-1)(p-i) + k}
      (\id^{\otimes 2} \otimes \mu^c) \circ (\gD_{p,q,i} \otimes \id) \circ \gD_{n-1,2,k}
\\[1mm]
    &
      \stackrel{\scriptscriptstyle{\eqref{orzo}}}{=}
      &
      \Sum_{p+q=n} \Sum^{p}_{k=1} \Sum^{p}_{i=k+1}
                 (-1)^{\foo (q-1)(p-i) + k}
      (\id^{\otimes 2} \otimes \mu^c) \circ (\gD_{p,q,i} \otimes \id) \circ \gD_{n-1,2,k}
      \\[1mm]
      &&
 \quad      
+
\Sum_{p+q=n} \Sum^{p}_{i=1} \Sum^{n-1}_{k=q+i}
                 (-1)^{\foo (q-1)(p-i) + k}
          (\id^{\otimes 2} \otimes \mu^c) \circ (\id \otimes \gs) \circ (\gD_{p,2,k-q+1} \otimes \id) \circ \gD_{p+1,q,i}
\\[1mm]
      &
      \stackrel{\scriptscriptstyle{\eqref{orzo}}}{=}
      &
      \Sum_{p+q=n} \Sum^{p}_{k=1} \Sum^{p+1}_{i=k+2}
                       (-1)^{\foo (q-1)(p-1-i) + k}
      (\id \otimes \mu^c \otimes \id) \circ (\id \otimes \gs) \circ (\gD_{p,q,i-1} \otimes \id) \circ \gD_{n-1,2,k}
      \\[1mm]
      &&
 \quad      
+
      \Sum_{p+q=n} \Sum^{p}_{i=1} \Sum^{p}_{k=i+1}
                 (-1)^{\foo (q-1)(p-i) + k-q+1}
      (\id \otimes \mu^c \otimes \id) \circ (\gD_{p,2,k} \otimes \id) \circ \gD_{p+1,q,i}
\\[1mm]
      &
      \stackrel{\scriptscriptstyle{}}{=}
      &
      \Sum_{p+q=n} \Sum^{p}_{k=1} \Sum^{p+1}_{i=k+2}
       (-1)^{\foo (q-1)(p-1-i) + k}
      (\id \otimes \mu^c \otimes \id) \circ (\gD_{p,2,k} \otimes \id) \circ \gD_{p+1,q,i}
      \\[1mm]
      &&
 \quad      
+
      \Sum_{p+q=n} \Sum^{p}_{k=2} \Sum^{k-1}_{i=1}
 (-1)^{\foo (q-1)(p-i) + k-q+1}
      (\id \otimes \mu^c \otimes \id) \circ (\gD_{p,2,k} \otimes \id) \circ \gD_{p+1,q,i}
      \\[1mm]
&
      \stackrel{\scriptscriptstyle{}}{=}
      &
      \Sum_{p+q=n+1} \Sum^{p-1}_{k=1} \Sum^{p}_{i=k+2}
 (-1)^{\foo (q-1)(p-i) + k}
      (\id \otimes \mu^c \otimes \id) \circ (\gD_{p-1,2,k} \otimes \id) \circ \gD_{p,q,i}
      \\[1mm]
      &&
 \quad      
+
      \Sum_{p+q=n+1} \Sum^{p-1}_{k=2} \Sum^{k-1}_{i=1}
(-1)^{\foo (q-1)(p-i) + k}
      (\id \otimes \mu^c \otimes \id) \circ (\gD_{p-1,2,k} \otimes \id) \circ \gD_{p,q,i}
      \\[4mm]
      &
      \stackrel{\scriptscriptstyle{}}{=}
      &
  (15) + (14),
  \end{array}
\end{equation*}
re-indexing in the fourth step
$i \mapsto i-1$ 
in the first sum, $k \mapsto k-q+1$ in the second sum, $p \mapsto p-1$ in the penultimate step, 
plus enhanced triple sum yoga throughout.
Hence all terms are taken care of, therefore Eq.~\eqref{radicchio} is proven. This concludes the proof. 
  \end{proof}

As all terms in the homotopy relation \eqref{radicchio} involving the differential $d$ disappear on homology, we immediately obtain:

\begin{coro}
  The cup coproduct from Eq.~\eqref{aceto} is graded cocommutative on homology, and hence the groups $H_\bull(\cC)$ form a graded cocommutative counital coalgebra.
    \end{coro}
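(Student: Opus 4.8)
The plan is to read the statement off from the two structural results already established, namely Proposition~\ref{zenzero}, which makes $(\cC,\cup^c,d,\overline{e}^c)$ into a dg coassociative counital coalgebra, and the homotopy formula \eqref{radicchio} of Proposition~\ref{farro}. First I would observe that, $d$ being a coderivation of $\cup^c$, the map $\cup^c$ is a morphism of complexes $\cC_\bullet \to (\cC\otimes\cC)_\bullet$, the target carrying the usual Koszul-signed tensor-product differential $\partial$, and likewise $\overline{e}^c$ is a chain map to $k$. Passing to homology and composing with the Künneth map $H_\bullet(\cC)\otimes H_\bullet(\cC)\to H_\bullet(\cC\otimes\cC)$ — an isomorphism, e.g., when $k$ is a field — one obtains a well-defined counital coproduct $\overline{\cup}^c$ on $H_\bullet(\cC)$. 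Since the coassociativity \eqref{fagioli} and the counitality of $\cup^c$ with respect to $\overline{e}^c$ are strict identities of chain maps, they descend verbatim, so that $\big(H_\bullet(\cC),\overline{\cup}^c,\overline{e}^c\big)$ is already a counital coassociative coalgebra.

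It then remains to establish graded cocommutativity, that is, $\tau\circ\overline{\cup}^c = \overline{\cup}^c$, and for this I would evaluate \eqref{radicchio} on a cycle $x\in\cC(n)$ with $d x=0$. The term $[-]_{n-1}\circ d$ contributes $0$. The remaining term $\tau\circ\big(d\otimes\id + (-1)^n\,\id\otimes d\big)\circ\tau\circ[-]_n$, applied to $x$, is — after the same Koszul sign bookkeeping performed in \eqref{bretzel} — precisely the tensor differential $\partial$ of $(\cC\otimes\cC)_\bullet$ (up to an overall sign) applied to $[-]_n(x)\in\bigoplus_{p+q=n+1}\cC(p)\otimes\cC(q)$, hence a boundary. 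Consequently $\cup^c_\coop(x)-\cup^c(x)$ is a boundary, so $[\cup^c_\coop(x)] = [\cup^c(x)]$ in $H_\bullet(\cC\otimes\cC)$; as $\cup^c_\coop = \tau\circ\cup^c$, this is exactly the assertion that $\overline{\cup}^c$ is cocommutative for the graded flip $\tau$. Combined with the previous paragraph, this yields that $H_\bullet(\cC)$ is a graded cocommutative counital coalgebra.

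The one genuinely delicate point is the identification, in the cocommutativity step, of $\tau\circ(d\otimes\id + (-1)^n\,\id\otimes d)\circ\tau$ with $\partial$ up to sign: this is not conceptual but purely a matter of tracking the degree shift produced by $[-]_n$ (which lands in bidegrees $(p,q)$ with $p+q=n+1$) against the Koszul signs, exactly as in \eqref{bretzel}. Everything else is a formal consequence of Propositions~\ref{zenzero} and~\ref{farro}, since all terms in \eqref{radicchio} carry a factor of $d$ and therefore vanish after passing to homology.
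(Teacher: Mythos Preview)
Your argument follows exactly the paper's approach: the paper's entire proof is the single sentence preceding the corollary, namely that ``all terms in the homotopy relation \eqref{radicchio} involving the differential $d$ disappear on homology,'' combined with Proposition~\ref{zenzero} for the counital coassociative structure. Your write-up is a faithful elaboration of this.

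One point deserves a word of caution. You assert that $\tau\circ(d\otimes\id+(-1)^n\,\id\otimes d)\circ\tau$ is, after sign bookkeeping \`a la \eqref{bretzel}, the tensor differential $\partial$ up to an \emph{overall} sign. If one carries this out on the $(p,q)$-component of $[-]_n(x)$ (so $p+q=n+1$), one finds
\[
\tau\circ(d\otimes\id+(-1)^n\,\id\otimes d)\circ\tau\,(a\otimes b)
=(-1)^{p-1}\,da\otimes b+(-1)^{p}\,a\otimes db,
\]
which agrees with $(-1)^{p-1}\partial(a\otimes b)$ only when $p$ is odd; for $p$ even it differs from any global multiple of $\partial(a\otimes b)$. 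So the identification with ``$\pm\partial$'' is not literal, and the manipulation in \eqref{bretzel} does not settle it. The paper is equally brief at this spot and does not spell out why the right-hand side of \eqref{radicchio} represents a $\partial$-boundary on cycles; your deferral to \eqref{bretzel} therefore matches the paper's level of detail, but your parenthetical that this is ``the one genuinely delicate point'' is well taken --- it is more delicate than a routine sign chase.
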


This coproduct, however, is only part of a bigger structure, which leads to a notion dual to that of a Gerstenhaber algebra, which we already mentioned in the Introduction:

\begin{deff}
  \label{bombadacqua}
  A {\em Gerstenhaber coalgebra} is a graded
  cocommutative $k$-coalgebra $\big(\! \bigoplus_{n \in \N} V_n, \cup^c\big)$
  endowed with a graded Lie cobracket
  $
  \{-\} \colon V_n \to \bigoplus_{p+q=n+1} V_p \otimes V_q
  $
  of degree $+1$ such that the {\em coLeibniz} identity
\begin{equation}
  \label{biscotto}
(\id \otimes \cup^c) \circ \{-\} = \big(\{-\} \otimes \id + (\gvr \otimes \id) \circ (\id \otimes \{-\})\big) \circ \cup^c
  \end{equation}
holds, where $\gvr$ denotes the mixed flip as in \eqref{piadina}.
\end{deff}

\begin{prop}
  \label{miglio}
  Let $\cC$ be a cooperad with comultiplication, and denote $[-]^\coop := \rho \circ [-]$, where $\rho$ denotes the suspended flip as in \eqref{piadina}. Then the coopposite cobrace yields a coderivation of the cup coproduct, and the cobrace does so only up to homotopy. More precisely, one has
  \begin{equation}
    \label{ciambella1}
    (\mathrm{id} \otimes \cup^c) \circ [-]^\coop = \big([-]^\coop \otimes \id
    + (\gvr \otimes \id) \circ (\id \otimes [-]^\coop )\big) \circ \cup^c.
  \end{equation}
  Moreover, let
  \begin{equation}
    \label{pici}
    F \coloneqq \Sum_{p+q+r = n+2} \, \Sum^{p}_{i=1} \Sum^{p+q-1}_{j=q+i}
(-1)^{\foo (p+q)j + (p+r-1)i+ n}
    (\gD_{p,q,i} \otimes \id) \circ \gD_{p+q-1, r, j}
  \end{equation}
  as a map $\cC(n) \to \cC(p) \otimes \cC(q) \otimes \cC(r)$. Then 
  \begin{equation}
    \begin{split}
    \label{ciambella2}
    (\id \otimes \cup^c) \circ [-] &= \big([-] \otimes \id + (\gvr \otimes \id) \circ (\id \otimes [-]) \big) \circ \cup^c
    \\
    & \quad + F \circ d + (d \otimes \id^{\otimes 2}) \circ F
\\
& \quad
-  (\tau \otimes \id) \circ (d \otimes \id^{\otimes 2}) \circ (\tau \otimes \id) \circ F
\\
& \quad
+
(\id \otimes \tau) \circ
(\tau \otimes \id) \circ
(d \otimes \id^{\otimes 2}) \circ (\tau \otimes \id) \circ (\id \otimes \tau) \circ F
\end{split}
  \end{equation}
  holds.
\end{prop}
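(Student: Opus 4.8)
The statement has two halves: the exact identity \eqref{ciambella1} for the coopposite cobrace $[-]^\coop=\rho\circ[-]$, and the homotopy identity \eqref{ciambella2} for the cobrace $[-]$ itself. The plan is to prove both by the bare-hands method already used in Propositions \ref{lunapop}, \ref{zenzero} and \ref{farro}: expand every composite, using \eqref{sale} and \eqref{aceto}, into a sum of iterated-degrafting operators — that is, composites of partial decompositions $\Delta_{p,q,i}$ with the single occurrence of $\mu^c$ (carried by $\cup^c$, or, in the homotopy terms, by $d$ via \eqref{olio1}) — then bring the degraftings into a common normal form by repeated use of the coassociativity relations \eqref{orzo} (together with the degenerate forms \eqref{orzo2}--\eqref{orzo4} for the boundary values of the indices), and finally match the two sides term by term after a re-indexing of the summation variables. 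As in the proof of Proposition \ref{farro}, the associativity \eqref{polenta} and counitality \eqref{polentaunita} of $\mu^c$ are not needed: exactly one copy of $\mu^c$ is in play throughout, so no two instances of $\mu^c$ ever have to be merged and no $e^c$ ever appears.

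For \eqref{ciambella1}: expanding the left-hand side $(\id\otimes\cup^c)\circ[-]^\coop$ and pushing the flip $\rho$ through, one gets a double sum whose summands, by one application of \eqref{orzo}, can be rewritten with the $2$-corolla destined for $\mu^c$ sitting leftmost. Expanding the right-hand side $\big([-]^\coop\otimes\id+(\gvr\otimes\id)\circ(\id\otimes[-]^\coop)\big)\circ\cup^c$ — where $\cup^c$ is applied first and the coopposite cobrace afterwards, to the first resp.\ to the last tensor factor — and normalising in the same way, the two double sums are seen to agree under a bijection of index sets: the summand $[-]^\coop\otimes\id$ accounts for those cobrace degraftings landing to the \emph{left} of the cut made by $\cup^c$, and $(\gvr\otimes\id)\circ(\id\otimes[-]^\coop)$ for those landing to its \emph{right}, with the mixed flip $\gvr$ produced by the Koszul rule when a factor is transposed across the cut. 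This is the cooperadic dual of the classical Hirsch (distributivity) formula $(a\smile b)\{c\}=a\{c\}\smile b\pm a\smile b\{c\}$ and, like it, holds already on chains; the sign match uses only the definitions of $\rho,\gvr,\gs$ in \eqref{piadina}.

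For \eqref{ciambella2}: one repeats the expansion and the \eqref{orzo}-normalisation with the full cobrace $[-]$ in place of $[-]^\coop$. The summands of $[-]$ whose degrafting index lies on one side of the $\cup^c$-cut reproduce, exactly as in the proof of \eqref{ciambella1}, the coderivation part $\big([-]\otimes\id+(\gvr\otimes\id)\circ(\id\otimes[-])\big)\circ\cup^c$. The remaining summands, for which the cobrace degrafting straddles the cut — these are governed by the middle case $j-q<i\le j$ of \eqref{orzo} — no longer telescope; they reorganise into the sequential composites $(\Delta_{p,q,i}\otimes\id)\circ\Delta_{p+q-1,r,j}$ of the definition \eqref{pici} of $F$. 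One then checks that these leftover terms are accounted for by $F\circ d$ together with the image of $F$ under the Koszul-signed total differential on $\cC^{\otimes 3}$, i.e.\ by $(d\otimes\id^{\otimes2})\circ F-(\tau\otimes\id)\circ(d\otimes\id^{\otimes2})\circ(\tau\otimes\id)\circ F+(\id\otimes\tau)\circ(\tau\otimes\id)\circ(d\otimes\id^{\otimes2})\circ(\tau\otimes\id)\circ(\id\otimes\tau)\circ F$: substituting the explicit expression \eqref{olio1} for $d$ turns each of these four terms, too, into sums of such composites, and a re-indexing of the summation variables (of the type $p\mapsto p+1$, $i\mapsto i+j-1$, $k\mapsto k-q+1$ already used all over the proof of Proposition \ref{farro}) matches them with the leftovers. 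This exhibits the difference between the two sides of \eqref{ciambella2}, minus the coderivation part, as the coboundary of $F$ in the relevant $\Hom$-complex, which is the assertion. Equivalently, since $[-]=[-]^\coop+\{-\}$ with $\{-\}=(\id-\rho)\circ[-]$, once \eqref{ciambella1} is established it suffices to prove the analogue of \eqref{ciambella2} for the cobracket $\{-\}$ in place of $[-]$, which one might prefer since $\rho\circ\{-\}=-\{-\}$ is then available.

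As in the earlier proofs, the real difficulty is bookkeeping, not ideas. For each summand one must determine which of the three cases of \eqref{orzo} (and which degenerate form \eqref{orzo2}--\eqref{orzo4}) applies, from the inequalities relating the degrafting indices to the cut $j$ and to the arities $q$ and $r$; one must carry along the $\gs$-flips introduced by the first and third cases of \eqref{orzo}; and one must reconcile the cobrace sign $(-1)^{(q-1)(p-i)}$ from \eqref{sale} with the signs of $\rho$, $\gvr$ and $\tau$ and with the extra Koszul signs that appear when the degree-$(-1)$ map $d$ is commuted past tensor factors — it is precisely this last point that forces the particular pattern of $\tau$'s in \eqref{ciambella2}. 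A further technical step is to split each multiple sum at the correct index so that the telescoping part and the $F$-part separate cleanly; I expect this to require a decomposition into intermediate terms comparable in number to the pieces $(1)$--$(15)$ in the proof of Proposition \ref{farro}, with the final identification of the $F$-coboundary being the longest single computation and the most error-prone on signs.
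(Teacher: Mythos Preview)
Your plan is correct and is exactly the paper's approach: expand everything via \eqref{sale} and \eqref{aceto}, normalise with the coassociativity relations \eqref{orzo}--\eqref{orzo4}, and match term by term after re-indexing. One organisational point for \eqref{ciambella2}: the paper does \emph{not} split $(\id\otimes\cup^c)\circ[-]$ into a coderivation-matching piece plus a straddling leftover as you suggest; instead it treats the three structural terms $([-]\otimes\id)\circ\cup^c$, $(\gvr\otimes\id)\circ(\id\otimes[-])\circ\cup^c$, and $(\id\otimes\cup^c)\circ[-]$ as monolithic blocks (labelled $(1)$, $(2)$, $(3)$) and shows that each one is cancelled by a specific face-contribution of $(d\otimes\id^{\otimes2})\circ F$ --- namely by the $d_{\rm last}$-, $d_0$-, and intermediate-face parts respectively --- after which the roughly a dozen residual terms $(4)$--$(16)$ are paired off against $F\circ d$ and the two $\tau$-conjugated differential terms. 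You may find that routing easier than trying to carve the coderivation part out of the left-hand side directly.
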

 
 \begin{proof}
For an improved reading flow, the proof of this proposition has been moved to the Appendix~\ref{appendix} due to its truly technical nature and notable length.
 \end{proof}
 
We have now gathered all necessary ingredients to state (and prove) our main theorem:

\begin{theorem}
  \label{asparagi}
  The homology groups arising from a cooperad with comultiplication constitute a Gerstenhaber coalgebra.
  \end{theorem}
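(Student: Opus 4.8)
The plan is to assemble the Gerstenhaber coalgebra structure on $H_\bullet(\cC)$ from the results established so far and then to verify the one remaining axiom, the coLeibniz identity \eqref{biscotto}, by passing the chain-level identities of Proposition \ref{miglio} to homology.

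First I would record the coalgebra side. By Proposition \ref{sugo} the complex $(\cC_\bullet, d)$ is defined, hence so is $H_\bullet(\cC)$; by Proposition \ref{zenzero} the cup coproduct $\cup^c$ is a morphism of complexes with counit $\overline{e}^c$, so it descends to a counital coassociative coproduct on $H_\bullet(\cC)$, which by the Corollary following Proposition \ref{farro} is moreover graded cocommutative. Thus $\big(H_\bullet(\cC), \cup^c, \overline{e}^c\big)$ is a graded cocommutative counital coalgebra. Next I would record the Lie side: by Corollary \ref{sushi} the differential $d$ is a coderivation of the cobracket $\{-\}$ of \eqref{arancia}, and $\{-\}$ is coantisymmetric and satisfies coJacobi, so $\{-\}$ descends to a graded Lie cobracket $H_n(\cC) \to \bigoplus_{p+q=n+1} H_p(\cC)\otimes H_q(\cC)$ raising the arity by one, exactly as asked in Definition \ref{bombadacqua}. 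Throughout I would invoke the Künneth isomorphism $H_\bullet(\cC^{\otimes m}) \cong H_\bullet(\cC)^{\otimes m}$ (automatic over a field, and under $k$-flatness more generally) so that these structure maps genuinely take values in tensor powers of $H_\bullet(\cC)$.

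The heart of the argument is then the coLeibniz identity \eqref{biscotto} on homology. The key observation is that $\{-\} = (\id-\rho)\circ[-] = [-] - [-]^\coop$ with $[-]^\coop = \rho\circ[-]$, so I would subtract the on-the-nose identity \eqref{ciambella1} (valid for $[-]^\coop$ already on chains) from the identity \eqref{ciambella2} for $[-]$. Because the ``clean'' right-hand sides depend linearly on the cobrace, their difference collapses precisely to $\big(\{-\}\otimes\id + (\gvr\otimes\id)\circ(\id\otimes\{-\})\big)\circ\cup^c$, and one is left, as an equality of maps $\cC(n)\to\cC^{\otimes 3}$, with
\[
(\id\otimes\cup^c)\circ\{-\} = \big(\{-\}\otimes\id + (\gvr\otimes\id)\circ(\id\otimes\{-\})\big)\circ\cup^c + C,
\]
where $C = F\circ d + (d\otimes\id^{\otimes 2})\circ F - (\tau\otimes\id)\circ(d\otimes\id^{\otimes 2})\circ(\tau\otimes\id)\circ F + (\id\otimes\tau)\circ(\tau\otimes\id)\circ(d\otimes\id^{\otimes 2})\circ(\tau\otimes\id)\circ(\id\otimes\tau)\circ F$ is the correction appearing in \eqref{ciambella2} and $F$ is the map of \eqref{pici}. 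The last step is to see that $C$ vanishes on homology: conjugating $d\otimes\id^{\otimes 2}$ by the graded transpositions occurring in $C$ turns its last three summands into the (suitably signed) total differential $D$ of $\cC_\bullet^{\otimes 3}$ applied after $F$, so that $C = F\circ d + D\circ F$; evaluated on a $d$-cycle $x$ this equals $F(dx) + D(Fx) = D(Fx)$, a boundary, hence zero in $H_\bullet(\cC^{\otimes 3}) \cong H_\bullet(\cC)^{\otimes 3}$. Consequently \eqref{biscotto} holds in $H_\bullet(\cC)$, and combining this with the coalgebra and Lie data above shows that $\big(H_\bullet(\cC), \cup^c, \{-\}\big)$ is a Gerstenhaber coalgebra.

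As for obstacles: given all the preceding propositions the theorem itself is essentially bookkeeping, so the real technical weight lies in Proposition \ref{miglio} --- the chain identities \eqref{ciambella1} and, above all, \eqref{ciambella2} with its explicit homotopy $F$ --- which is exactly why its proof is relegated to the appendix. Within the present argument the only genuine point of care is the sign check identifying $C$ with $F\circ d + D\circ F$ (so that it truly dies on homology rather than merely looking like it should), together with the mild flatness/Künneth hypothesis needed to identify the homology of tensor powers with tensor powers of the homology.
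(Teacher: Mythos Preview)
Your proposal is correct and takes essentially the same approach as the paper: assemble the cocommutative coalgebra and dg Lie coalgebra data from Propositions \ref{zenzero}, \ref{farro} and Corollary \ref{sushi}, then obtain the coLeibniz identity on homology by subtracting \eqref{ciambella1} from \eqref{ciambella2} and observing that the $d$-correction terms from Proposition \ref{miglio} vanish when passing to homology. The paper's own proof is a one-sentence citation of precisely these ingredients, so you have in fact spelled out more than the paper does (including the K\"unneth caveat), and you rightly flag the sign verification for $C$ as the one delicate point left to check.
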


\begin{proof}
This follows by combining Corollary \ref{sushi}, Proposition \ref{farro}, and Proposition \ref{miglio}, taking into account that all terms involving the differential $d$ from Proposition \ref{sugo} disappear when descending to homology.
\end{proof}

\section{Examples}
In this section, we exhibit a couple of classical homology theories that are endowed with the structure of a Gerstenhaber coalgebra as a consequence of Theorem \ref{asparagi}, and give the necessary technical details along with all structure maps. To our knowledge, while some single pieces in some example contexts appeared before, the full respective structures have not been observed in the literature so far.

\subsection{Chain spaces arising from bialgebras as cooperads}
\label{gelato}

  Let $(B,\mu,\eta,\gd,\gve)$ be a bialgebra over a commutative ring $k$, where $\gd$ denotes here the customary coalgebra comultiplication, in order to avoid confusion with the (collection of) cooperadic decompositions $\gD$, see below.

\begin{construction}
  For any $p\in \mathbb{N}$, set $C_p(B) \coloneqq B^{\otimes p}$. The $k$-linear $\mathbb{N}$-module $C_\bullet(B)=\{C_p(B)\}_{p \in \N}$ can be endowed with the structure of a cooperad as follows: for any $n, k, n_1,\dots,n_k \in \mathbb{N}$ such that $n_1+\dots+n_k=n$, define the map
  \[
\Delta_{k,n_1,\dots, n_k}\colon C_n(B) \rightarrow C_k(B) \otimes \big(C_{n_1}(B)\otimes \dots \otimes C_{n_k}(B)\big)
\]
as the composition:
    \begin{center}
          \begin{tikzcd}
    C_n(B)=B^{\otimes n} \arrow[rr] \arrow[d, "\gd^{\otimes n}"] &  & C_k(B)\otimes \big(C_{n_1}(B)\otimes \dots \otimes C_{n_k}(B)\big)                                                                    \\
    (B^{\otimes 2})^{\otimes n} \arrow[rr, "\simeq "]               &  & (B^{\otimes n_1}\otimes \dots \otimes B^{\otimes n_k})\otimes (B^{\otimes n_1}\otimes \dots \otimes B^{\otimes n_k}),
    \arrow[u, "(\mu_{n_1}\otimes \dots \otimes \mu_{n_k})\otimes (\text{id}\otimes\dots\otimes \text{id})"']
    \end{tikzcd}
    \end{center}
    where we adopt the convention that, if some $n_j=0$, the map $\mu_{0}\colon B^{\otimes 0}\simeq k \to B$ is given by the
    unit $\eta\colon k\to B$.

    \pagebreak
    
   \noindent If  we denote an elementary tensor in $C_n(B)$ by $(x_1,\dots,x_n)$ and use
  Sweedler notation
 $\delta(x) = x^{(1)} \otimes x^{(2)}$
  with superscripts for the coalgebra coproduct, we can
  write
\begin{equation*}
  \begin{split}
 & \Delta_{k,n_1,\dots, n_k}(x_1,\ldots,x_n)
    \\
= \ & \big(\mu_{n_1}(x_1^{(1)},\ldots,x_{n_1}^{(1)}),\ldots , \mu_{n_k}(x_{n-n_k +1}^{(1)},\ldots,x_n^{(1)})\big) \otimes (x_1^{(2)},\ldots, x_{n_1}^{(2)})\otimes \ldots \otimes (x_{n-n_k+1}^{(2)},\ldots,x_n^{(2)}),
\end{split}
\end{equation*}
where $\mu_n$ denotes the algebra multiplication in $n$ elements.
Let $\Delta$ denote the collection of these (co-)operations, that is, $\Delta= \{\Delta_{k,n_1,\dots,n_k}\}_{k,n_i \in \N}$. Apart from the (crucial) difference that we also consider nonzero objects for $n=0$, this is the cooperadic structure on $C_\bull(B)$ mentioned in \cite{BerMoe:AHTFO} or \cite[Lem.~4.10]{VdL:OATHAOR}.
  \end{construction}

\begin{prop}
  Let $(B,\mu,\delta,\eta,\gve)$ be a $k$-bialgebra. Then $(C_\bullet(B), \Delta, \gve)$ is a counital cooperad.
\end{prop}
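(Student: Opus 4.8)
The plan is to verify the axioms of Definition~\ref{carciofo} directly. The first task is to extract from the ``full'' decomposition maps $\Delta_{k,n_1,\dots,n_k}$ the partial ones $\Delta_{p,q,i}\colon C_{p+q-1}(B)\to C_p(B)\otimes C_q(B)$: dually to the way the partial composition $\circ_i$ of an operad is obtained by inserting the operadic unit in all slots but one, I would take $\Delta_{p,q,i}$ to be $\Delta_{p,1,\dots,1,q,1,\dots,1}$ (with the $q$ sitting in the $i$-th block) postcomposed with the cooperadic counit $\gve$ applied to every lower tensor factor except the $i$-th block. Using the counit identity $x^{(1)}\gve(x^{(2)})=x$, this collapses to the explicit formula
\[
\Delta_{p,q,i}(x_1,\dots,x_{p+q-1})=\big(x_1,\dots,x_{i-1},\mu_q(x_i^{(1)},\dots,x_{i+q-1}^{(1)}),x_{i+q},\dots,x_{p+q-1}\big)\otimes(x_i^{(2)},\dots,x_{i+q-1}^{(2)}),
\]
with the convention $\mu_0=\eta$ when $q=0$; this is the form in which I would carry out every subsequent verification.

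With this at hand, the two counit relations of Definition~\ref{carciofo} are immediate: for $(\gve\otimes\id)\circ\Delta_{1,n,1}=\id$ one uses that $\gve$ is an algebra morphism to write $\gve(\mu_n(x_1^{(1)},\dots,x_n^{(1)}))=\gve(x_1^{(1)})\cdots\gve(x_n^{(1)})$ and then $\gve(x_m^{(1)})x_m^{(2)}=x_m$, while for $(\id\otimes\gve)\circ\Delta_{n,1,i}=\id$ one applies $x_i^{(1)}\gve(x_i^{(2)})=x_i$ directly. The substance of the proof is the coassociativity relation~\eqref{orzo}. I would prove it by applying both sides to a generic elementary tensor $(x_1,\dots,x_{p+q+r-2})$ and comparing term by term, splitting into the three cases according to the position of the index $j$ relative to the block $[i,i+q-1]$ on which the outer decomposition $\Delta_{p,q,i}$ acts. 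The three ingredients needed are exactly the remaining bialgebra axioms: coassociativity of $\delta$, to reorganise the iterated coproducts produced by composing two decompositions; associativity of $\mu$, to merge the nested products and regroup the first tensor factor; and the multiplicativity of $\delta$ in the form $\delta(\mu_r(a_1,\dots,a_r))=\mu_r(a_1^{(1)},\dots,a_r^{(1)})\otimes\mu_r(a_1^{(2)},\dots,a_r^{(2)})$, which is invoked precisely in the middle case $j-q<i\le j$, where the block multiplied by $\mu_q$ contains the element $\mu_r(x_j^{(1)},\dots,x_{j+r-1}^{(1)})$ produced by the first decomposition. In the outer cases $i\le j-q$ and $j<i\le p$ the two blocks are disjoint, so once the index shift $m\mapsto m+r-1$ caused by the first decomposition cancels against the range of the second, one recovers the same tensor up to the transposition $\gs$ of the two small factors --- this is the $(\id\otimes\gs)$ appearing in \eqref{orzo}. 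The degenerate instances \eqref{orzo2}--\eqref{orzo4} (i.e.\ $q=0$ or $r=0$) are handled in the same way, now additionally using $\delta(1)=1\otimes1$, unitality of $\mu$ with respect to $\eta$, counitality of $\delta$ with respect to $\gve$, and $\gve\circ\eta=\id_k$.

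I expect the only genuine difficulty to be purely bookkeeping: keeping track of the shift $m\mapsto m+r-1$ and of how the summation ranges in \eqref{orzo} are calibrated to it, and checking that no sign intervenes --- the ambient category is $\kmod$ with the plain (ungraded) symmetry, so the only permutation that can appear is $\gs$. Conceptually there is nothing subtle: the content of the statement is exactly that the bialgebra axioms of $B$ render the combinatorics of degrafting coherent, and in particular that keeping the nonzero arity-$0$ piece $C_0(B)=k$ costs nothing, since $\eta$, $\gve$, $\delta(1)=1\otimes1$ and $\gve\circ\eta=\id_k$ provide precisely the degenerate cases of all the relations.
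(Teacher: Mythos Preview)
Your proposal is correct and in fact more complete than what the paper does: the paper gives no proof at all for this proposition but simply refers to the literature (Berger--Moerdijk and van der Laan), contenting itself with writing down the explicit partial decomposition~\eqref{pandoro}, which agrees with the formula you derived. Your direct verification of Definition~\ref{carciofo} --- extracting $\Delta_{p,q,i}$ from the full decomposition via the counit, checking the two counit identities from $\gve\circ\mu=\gve\otimes\gve$ and the coalgebra counit law, and verifying~\eqref{orzo} by the three-case split using coassociativity of $\delta$, associativity of $\mu$, and the bialgebra compatibility $\delta\circ\mu_r=(\mu_r\otimes\mu_r)\circ\delta^{\otimes r}$ --- is exactly the standard argument one would expect to find in the cited references, and there is nothing missing in your outline.
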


\noindent See {\em op.~cit.} for a proof. For our purposes let us, nevertheless, explicitly write down the partial decompositions: for any $p,q \in \mathbb{N}$ and $1 \leq i \leq p$, the $i^{\text{th}}$-partial decomposition is given by
\begin{eqnarray}
  \label{pandoro}
  \Delta_{p,q,i} \colon  \ C_{p+q-1}(B) & \rightarrow &  C_p(B) \otimes C_q(B),
  \\
  \nonumber
 (x_1,\dots,x_{p+q-1})
  &\mapsto & (x_1,\dots,x_{i-1}, x_i^{(1)} \cdots x_{i+q-1}^{(1)}, x_{i+q},\dots,x_{p+q-1})\otimes (x_i^{(2)},\dots,x_{i+q-1}^{(2)}),
\end{eqnarray}
where in the left tensor factor for compactness we denoted the multiplication in $B$ simply by juxtaposition.
If $q=0$, then the above simplifies to
$$
\Delta_{p,0,i}(x_1,\dots,x_{p-1})= (x_1,\dots,x_{i-1}, 1_B ,x_i,\dots,x_{p-1} )\otimes (1_k).
$$

\begin{construction}
 We define a map $\psi\colon C_\bullet(B) \to \ass^*$ of $k$-linear $\mathbb{N}$-modules. Since $\ass^*$ is defined by taking the linear dual arity-wise and $\ass(n)$ is the free $k$-module on the generator $\nu_n$, any element $f\in \ass(n)^*=\mathsf{Hom}(\ass(n),k)=\mathsf{Hom}(k\nu_n, k)$ may be identified with the scalar $f(\nu_n)$, and we use this observation to define:
    \begin{align*}
   & \psi_0(1_k)\coloneqq 1_k  & \\
   & \psi_p(x_1,\dots,x_p) \coloneqq \gve (\mu_p (x_1,\dots,x_p)) & \text{for any $p\geq 1$ and $(x_1,\dots,x_p) \in C_p(B)$.}
    \end{align*}     

\end{construction}

\begin{prop}
    The map $\psi\colon \ass^*\to C_\bullet(B)$ in $\mathbb{N}$-$\mathsf{mod}_k$ is a map of cooperads. In other words, $(C_\bullet(B),\psi)$ is a cooperad with comultiplication.
\end{prop}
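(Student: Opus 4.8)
The plan is to unwind the definition of a cooperad morphism through the identification $\ass(n)^* \cong k$, $f \mapsto f(\nu_n)$, that the construction has already fixed. Because $\nu_p \circ_i \nu_q = \nu_{p+q-1}$ in $\ass$, under these identifications every dual partial decomposition $\Delta^{\ass^*}_{p,q,i}$ collapses to the canonical isomorphism $k \xrightarrow{\sim} k \otimes k$, and the counit of $\ass^*$ collapses to $\id_k$; meanwhile the components of $\psi$ become $\psi_p = \gve \circ \mu_p \colon C_p(B) \to k$, the case $p = 0$ being covered by the convention $\mu_0 = \eta$ together with $\gve \circ \eta = \id_k$. Since a morphism of counital cooperads presented by partial decompositions (Definition \ref{carciofo}) is exactly a family of $k$-linear maps commuting with the counits and with every $\Delta_{p,q,i}$, the statement reduces to two verifications.

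The first is counit compatibility, which is immediate: $\psi_1 = \gve \circ \mu_1 = \gve$ is precisely the counit of $C_\bullet(B)$. The second, genuine point is to show, for all $p > 0$, $q \geq 0$ and $1 \leq i \leq p$, that
\[
(\psi_p \otimes \psi_q) \circ \Delta_{p,q,i} = \psi_{p+q-1}
\]
as maps $C_{p+q-1}(B) \to k$, silently using $k \otimes k \cong k$. For $q \geq 1$ I would feed the explicit formula \eqref{pandoro} into the left-hand side, evaluated on an elementary tensor $(x_1,\dots,x_{p+q-1})$: the left factor contributes $\gve$ of the product $x_1 \cdots x_{i-1}\, x_i^{(1)} \cdots x_{i+q-1}^{(1)}\, x_{i+q} \cdots x_{p+q-1}$, the right factor contributes $\gve(x_i^{(2)} \cdots x_{i+q-1}^{(2)})$, and multiplicativity of $\gve$ regroups the product as $\prod_{j<i} \gve(x_j) \cdot \prod_{j=i}^{i+q-1} \gve(x_j^{(1)})\gve(x_j^{(2)}) \cdot \prod_{j>i+q-1} \gve(x_j)$. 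The counit axiom $\sum x^{(1)}\gve(x^{(2)}) = x$, post-composed with $\gve$, gives $\sum \gve(x^{(1)})\gve(x^{(2)}) = \gve(x)$, so each middle factor collapses and one is left with $\gve(x_1 \cdots x_{p+q-1}) = \psi_{p+q-1}(x_1,\dots,x_{p+q-1})$, as wanted. The degenerate case $q = 0$ is shorter still: there $\Delta_{p,0,i}$ merely inserts $1_B$ in the $i$-th slot and tensors with $1_k$, so the left-hand side equals $\gve(x_1 \cdots x_{i-1}\, 1_B\, x_i \cdots x_{p-1}) = \gve(x_1 \cdots x_{p-1}) = \psi_{p-1}(x_1,\dots,x_{p-1})$.

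I do not expect a serious obstacle here; the computation is purely formal once the single bialgebra identity $(\gve \otimes \gve) \circ \gd = \gve$ (equivalently, $\gve$ being an algebra homomorphism together with the counit axiom) is in hand. What needs a little care is keeping the Sweedler indices straight, handling the $p = 0$ and $q = 0$ conventions ($\mu_0 = \eta$, and $\Delta_{p,0,i}$ inserting the unit) uniformly, and applying the identification $\ass(n)^* \cong k$ consistently so that the target $k \otimes k$ of $(\psi_p \otimes \psi_q) \circ \Delta_{p,q,i}$ is read as $k$.
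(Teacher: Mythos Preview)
Your proposal is correct and follows essentially the same approach as the paper: both reduce the cooperad-morphism condition to the single identity $(\psi_p \otimes \psi_q) \circ \Delta_{p,q,i} = \psi_{p+q-1}$, handle the cases $q \geq 1$ and $q = 0$ separately, and invoke exactly the bialgebra facts that $\gve$ is multiplicative and $(\gve \otimes \gve)\circ\gd = \gve$. Your write-up is slightly more explicit than the paper's in spelling out the identification $\ass(n)^* \cong k$ and the counit compatibility $\psi_1 = \gve$, but the substance is identical.
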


\begin{proof}    Checking that $\psi$ is a morphism of cooperads boils down to
    observing that
    $$
    \gve\big(\mu_{n-1}(x_1,\dots,x_{n-1})\big)= \gve\big(\mu_n(x_1,\dots,x_{i-1}, 1_B, x_i,\dots, x_{n-1})\big) 
    $$
    for any $n\geq 1$ and $1 \leq i \leq n$, as well as considering that, if $m\geq 1$,  and $(x_1,\dots,x_{n+m-1})\in C_{n+m-1}(B)$ is an elementary tensor, the two scalars
    \[
    \gve \big(\mu_{n+m-1}(x_1,\dots,x_{n+m-1})\big)
    \]
    and
    \[
    \gve\big(\mu_n(x_1,\dots,x_{i-1}, \mu_m(x_i^{(1)},\dots,x_{i+m-1}^{(1)}), x_{i+m},\dots,x_{m+n-1})\big)\cdot\gve\big(\mu_m(x_i^{(2)},\dots,x_{i+m-1}^{(2)})\big)
    \]
    are equal.
This latter affirmation is true since $\gve$ is a morphism of algebras, the ground ring $k$ is assumed to be commutative, which implies that $\gve\circ \mu_2= \gve\otimes\gve$ as well as $(\gve\otimes\gve)\circ \Delta= \gve$.
    \end{proof}

\begin{rmk}
  \label{aglio}
  The morphism $\psi\colon C_\bullet(B)\to \ass^*$ just defined induces a morphism
  $$
  \psi^*\colon \ass^{**}=\ass \to C_\bullet(B)^*
  $$
  of operads. Explicitly, the three special operations which make $C_\bullet(B)^*$ an operad with multiplication are given by the $k$-linear maps
  \begin{equation}
    \label{panettone}
    \begin{array}{lr}
      \mu^c = \gve \circ \mu_2 \colon & B\otimes B \to k,
      \\
      \mathbbm{1}^c = \gve\colon & B \to k,
\\
      e^c = \id_k \colon  &k \to k.  
    \end{array}
    \end{equation}
    \end{rmk}

\noindent
For the homological part, 
recall that the
graded $k$-module $C_\bull(B) = \bigoplus_{p} C_p(B)$ can also be considered as a chain complex with standard differential
 given by 
 \begin{equation}
   \label{papavero}
  \begin{array}{rcl}
    d_n\colon  C_n(B)  &\to& C_{n-1}(B), \\ 
    (b_1,\dots,b_n) & \mapsto  &
    (\gve(b_1)b_2,b_3,\dots,b_n)
+ \Sum_{i=1}^{n-1}(-1)^i (b_1,\ldots, b_i b_{i+1}, \ldots, b_n)
\\
&&  + (-1)^n(b_1,\ldots,b_{n-1}\gve(b_n)).
\end{array}
 \end{equation}
 Even if this only depends on the bialgebra structure and no antipode is needed, to better distinguish this from the Gerstenhaber-Schack bialgebra homology \cite{GerSch:ABQGAAD} that appears in bialgebra deformation theory, we refer to the respective homology $H_\bull(B,k) := H\big(C_\bull(B), d \big)$ as {\em Hopf algebra homology} (with trivial coefficients), which, if $k$ is a field or, more general, if $B$ is $k$-projective, computes the groups $\Tor^B_\bull(k,k)$. We can then state:

\begin{lemma}
\label{sesamo}
The differential \eqref{olio} 
coincides with the canonical one in \eqref{papavero} arising in Hopf algebra homology. Hence, the homology of the cooperad $C_\bull(B)$ induced by its comultipli\-cation in the sense of Definition \ref{zenzero} coincides with the Hopf algebra homology of $C_\bull(B)$. 
  \end{lemma}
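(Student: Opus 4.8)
The plan is to prove the identity of the two differentials by a direct, if slightly tedious, substitution, since both sides are given by explicit formulas. Recall from \eqref{olio1} that on $C_n(B)$ the differential induced by the comultiplication reads
\[
d = (\mu^c \otimes \id) \circ \big(\gD_{2,n-1,2} + (-1)^n \gD_{2,n-1,1}\big) + \sum_{i=1}^{n-1} (-1)^i (\id \otimes \mu^c) \circ \gD_{n-1,2,i},
\]
and that, by \eqref{panettone}, the distinguished binary operation is $\mu^c = \gve \circ \mu_2 \colon B \otimes B \to k$. It therefore suffices to evaluate the three families of summands on an elementary tensor $(x_1,\dots,x_n)$ by plugging in the closed form \eqref{pandoro} of the partial decompositions, and then to simplify using the counit axioms $\gve(x^{(1)})\,x^{(2)} = x = x^{(1)}\,\gve(x^{(2)})$ together with multiplicativity of the counit, $\gve \circ \mu_m = \gve^{\otimes m}$.

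I would first treat the two external faces. Specialising \eqref{pandoro} to $p = 2$ gives $\gD_{2,n-1,2}(x_1,\dots,x_n) = \big(x_1,\, x_2^{(1)}\cdots x_n^{(1)}\big) \otimes \big(x_2^{(2)},\dots,x_n^{(2)}\big)$; applying $\mu^c = \gve\circ\mu_2$ to the left tensor factor, distributing the counit over the product, and then collapsing each Sweedler leg $x_j^{(1)}$ against $x_j^{(2)}$ leaves exactly $\gve(x_1)\,(x_2,\dots,x_n)$, which is the first summand of \eqref{papavero}. The analogous computation for $\gD_{2,n-1,1}(x_1,\dots,x_n) = \big(x_1^{(1)}\cdots x_{n-1}^{(1)},\, x_n\big) \otimes \big(x_1^{(2)},\dots,x_{n-1}^{(2)}\big)$ produces $\gve(x_n)\,(x_1,\dots,x_{n-1})$, which, weighted by the sign $(-1)^n$, coincides with the last summand of \eqref{papavero}.

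Next I would handle the internal faces. Specialising \eqref{pandoro} to $q = 2$ gives $\gD_{n-1,2,i}(x_1,\dots,x_n) = \big(x_1,\dots,x_{i-1},\, x_i^{(1)} x_{i+1}^{(1)},\, x_{i+2},\dots,x_n\big) \otimes \big(x_i^{(2)}, x_{i+1}^{(2)}\big)$; applying $\mu^c$ to the right factor yields the scalar $\gve(x_i^{(2)})\,\gve(x_{i+1}^{(2)})$, and absorbing it into the $i$-th slot of the left factor via $x_i^{(1)}\gve(x_i^{(2)}) = x_i$ and $x_{i+1}^{(1)}\gve(x_{i+1}^{(2)}) = x_{i+1}$ produces $(x_1,\dots,x_{i-1},\, x_i x_{i+1},\, x_{i+2},\dots,x_n)$. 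Summing over $1 \le i \le n-1$ with the signs $(-1)^i$ reproduces the middle sum of \eqref{papavero}. Adding the three contributions shows that the map $d$ from \eqref{olio1} agrees with the standard Hopf-homology differential \eqref{papavero}; the small arities $n = 0, 1$ are to be inspected separately, but there both expressions vanish. Once the differentials are identified, the statement about the homology groups is immediate, and the fact that they compute $\Tor^B_\bullet(k,k)$ when $B$ is $k$-projective (in particular when $k$ is a field) is the classical computation of $\Tor$ via the reduced bar resolution.

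I do not anticipate any genuine conceptual obstacle: the entire argument is bookkeeping. The only points demanding a little attention are keeping track of which Sweedler leg is annihilated by which counit once $\mu^c$ has split off a product, and matching the signs $(-1)^n$ and $(-1)^i$ appearing in \eqref{olio1} with those in \eqref{papavero}; verifying these, together with the degenerate arities, is the whole of the work.
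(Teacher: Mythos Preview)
Your proposal is correct and follows precisely the approach the paper indicates: the paper's own proof merely states that this is ``a straightforward verification by simply comparing the map \eqref{papavero} to the differential arising from the alternating sum of the faces \eqref{riso1}--\eqref{riso3} by means of the decomposition maps \eqref{pandoro} and the (co)multiplication element \eqref{panettone}, and is therefore left to the reader.'' You have carried out exactly this verification in full detail, with the correct use of counitality and of $\gve$ being an algebra map.
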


\begin{proof}
 This is a straightforward verification by simply comparing the map \eqref{papavero} to the differential arising from the alternating sum of the faces \eqref{riso1}--\eqref{riso3} by means of the decomposition maps \eqref{pandoro} and the (co)multiplication element \eqref{panettone}, and is therefore left to the reader.
  \end{proof}

As an immediate consequence of this and our main Theorem \ref{asparagi}, we have:

\begin{prop}
  \label{fave}
For any $k$-bialgebra $B$, the homology groups $H_\bull(B,k)$ yield a Gerstenhaber coalgebra. Hence, if $B$ is $k$-projective, then the homology groups $\Tor_\bull^B(k,k)$ constitute a Gerstenhaber coalgebra, and as such, in particular, carry the structure of a graded cocommutative $k$-coalgebra.
  \end{prop}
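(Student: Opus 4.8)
The plan is simply to assemble the pieces already established. First I would recall that, by the two preceding propositions, the pair $(C_\bullet(B), \psi)$ — with $C_p(B) = B^{\otimes p}$, the partial decompositions $\Delta_{p,q,i}$ from \eqref{pandoro}, and the comultiplication morphism $\psi\colon C_\bullet(B) \to \ass^*$ — is a counital cooperad with comultiplication in $\kmod$. By Remark \ref{aglio}, the associated distinguished operations on the dual side are $\mu^c = \gve\circ\mu_2$, $\mathbbm{1}^c = \gve$, and $e^c = \id_k$ as in \eqref{panettone}, so all the hypotheses of the general theory of Sections~2--4 are met.

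Next I would apply Theorem \ref{asparagi}: the homology groups of a cooperad with comultiplication, computed with respect to the differential \eqref{olio} induced by the simplicial structure of Proposition \ref{sugo}, form a Gerstenhaber coalgebra, with cup coproduct induced by \eqref{aceto} and cobracket induced by \eqref{arancia}. By Lemma \ref{sesamo}, this differential agrees with the canonical Hopf algebra homology differential \eqref{papavero}, so the resulting graded $k$-module is precisely $H_\bullet(B,k) = H\big(C_\bullet(B), d\big)$. Hence $H_\bullet(B,k)$ carries a graded cocommutative coproduct and a graded coantisymmetric cobracket compatible via the coLeibniz identity \eqref{biscotto}, i.e.\ the structure of a Gerstenhaber coalgebra.

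Finally, for the $\Tor$ statement, I would invoke the standard fact from homological algebra that when $B$ is $k$-projective the complex $C_\bullet(B)$ (the two-sided bar construction with trivial coefficients) computes $\Tor^B_\bullet(k,k)$, so the Gerstenhaber coalgebra structure transports to these $\Tor$ groups. Since any Gerstenhaber coalgebra is in particular a graded cocommutative $k$-coalgebra via its cup coproduct (Definition \ref{bombadacqua}), the last assertion follows at once. There is no genuine obstacle here: all the work has been carried out in establishing Theorem \ref{asparagi} and Lemma \ref{sesamo}, and this proposition is the direct specialisation of that general machinery to the bialgebra example.
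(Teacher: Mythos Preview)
Your proof is correct and follows essentially the same approach as the paper: the statement is presented there as an immediate consequence of Lemma \ref{sesamo} and Theorem \ref{asparagi}, and you have simply spelled out this implication in a bit more detail.
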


\begin{rmk}
As far as we know, only the structure of a graded cocommutative coproduct on $\Tor_\bull^B(k,k)$ has been obtained before from a different approach. This appears in, for example, \cite[Prop.~7.10 \& Cor.~7.12]{McC:AUGTSS} or elsewhere. A short computation
with the help of the special elements \eqref{panettone} from Remark \ref{aglio} and the general expression \eqref{aceto} of the cup coproduct leads to the expected expression:
\begin{equation*}
  \begin{split}
    \cup^c (x_1, \ldots, x_n)
& = \Sum^{n+1}_{j=1} (\mu^c \otimes \id^{\otimes 2}) \circ (\Delta_{2, j-1, 1} \otimes \id) \circ \Delta_{j, n+1-j, j} (x_1, \ldots, x_n)
    \\
    & =
    (\mu^c \otimes \id^{\otimes 2}) \circ (\Delta_{2, 0, 1} \otimes \id) \circ \Delta_{1, n, 1} (x_1, \ldots, x_n)
    \\
    &
\quad    +  \Sum^{n}_{j=2} (\mu^c \otimes \id^{\otimes 2}) \circ (\Delta_{2, j-1, 1} \otimes \id) \circ \Delta_{j, n+1-j, j} (x_1, \ldots, x_n)
    \\
    &
\quad +    (\mu^c \otimes \id^{\otimes 2}) \circ (\Delta_{2, n, 1} \otimes \id) \circ \Delta_{n+1, 0, n+1} (x_1, \ldots, x_n)
    \\
    & =
    \mu^c(1_B, x^{(1)}_1  \cdots x^{(1)}_n) \otimes  (1_k) \otimes (x^{(2)}_1,  \ldots, x^{(2)}_n) 
    \\
    &
    \quad    +  \Sum^{n}_{j=2}
 \mu^c(x^{(1)}_1 \cdots x^{(1)}_{j-1}, x^{(1)}_{j} \cdots x^{(1)}_{n}) \otimes (x^{(1)}_{1}, \ldots x^{(1)}_{j-1}) \otimes (x^{(2)}_{j}, \ldots, x^{(2)}_{n})
    \\
    &
    \quad +
   \mu^c(x^{(1)}_1  \cdots x^{(1)}_n, 1_B) \otimes (x^{(2)}_1,  \ldots, x^{(2)}_n) \otimes (1_k)
\\
    &= (1_k) \otimes (x_1, \ldots, x_n) +
\Sum^{n-1}_{j=1} (x_1, \ldots, x_j) \otimes (x_{j+1}, \ldots, x_n)
+
(x_1, \ldots, x_n) \otimes (1_k),
  \end{split}
  \end{equation*}
that is, the sum over all possible deconcatenations of $(x^1, \ldots, x^n)$, plus the trivial ones; compare {\em loc.~cit.} again.
\end{rmk}

\subsection{Cooperadic structures on Hochschild chains for Frobenius algebras}

In striking contrast to (homotopy) Gerstenhaber algebra structures on Hochschild cohomology (resp.\ cochains, with values in the algebra in question itself) as a result of a multiplicative operadic structure on the level of Hochschild cochains, it is not so clear how to directly dualise this to a cooperadic structure on Hochschild {\em chains} (resp.\ a Gerstenhaber coalgebra structure on Hochschild homology), at least not with values in the algebra in question itself. It seems that an extra structure is needed in order to decompose a Hochschild chain, seen as an upside-down tree with the coefficients on top, into two of the same kind (as becomes quite clear from a graphical presentation). Such an extra structure arises, for example, by the coproduct that comes with a Frobenius algebra, while the cooperadic comultiplication is induced by the Frobenius functional.

\subsubsection{Frobenius algebras}

Let us briefly recall one possible standard definition of Frobenius algebras that suits best our needs, plus a couple of technical details required for subsequent computations. Let $k$ be a field in what follows.

 \begin{deff}
 A {\em Frobenius algebra} is an algebra $A$ with a 
 functional 
 $ \varepsilon : A \rightarrow k$ 
 such that the map $A \rightarrow A^*, a \mapsto
 \varepsilon_a$  
 is bijective, where $ \varepsilon_a(b):=\varepsilon(ab)$.
 The functional $ \varepsilon $
 is called a {\em Frobenius functional}.
 \end{deff}

 This implies that $A$ is finite-dimensional and also a $k$-coalgebra with $A$-bilinear coproduct
$\gD \colon A \to A \otimes A$,
 the counit of which is precisely the Frobenius functional: let $\{e_i\}_{1 \leq i \leq n}  \in A$ be a basis and  $\{e^i\}_{1 \leq i \leq n}  \in A$ another basis defined via $\gve(e^j e_i) = \delta^j_i$.
Observe that, despite of its notation, the $e^j$ are {\em not} the dual basis but live in $A$ itself.
 Set
 $$
 	\Delta (1)= 1^{(1)} \otimes 1^{(2)} = \textstyle\sum_{i}  e_i \otimes e^i,
 $$
 and by $A$-bilinearity we have for all $a \in A$:
 \begin{equation}
 \label{sonne}
 	\Delta (a)= a^{(1)} \otimes a^{(2)} = \textstyle\sum_{i} ae_i \otimes e^i = \textstyle\sum_{i} e_i \otimes e^ia,
 \end{equation}
and then counitality comes out as
 \begin{equation}
 \label{mond1}
 	a=\textstyle\sum_{i} \varepsilon (ae_i)e^i=
 	\textstyle\sum_{i} \varepsilon (e^ia)e_i
 \end{equation}
 for all $a \in A$.

 The map $A \to A^*, \, a \mapsto \gve_a$ is right $A$-linear with respect to right multiplication on
 $A$ and right action on $A^*$ defined 
 by $\phi \otimes a \mapsto \phi(a(-))$. This way, one obtains an isomorphism $A \simeq A^*$ of right $A$-modules, which implies that $a \mapsto \gve_a$ becomes a morphism of left $A$-modules as well with respect
to the {\em twisted} left $A$-action
$a \otimes b \mapsto \gs(a)b$
 on $A$ for an automorphism $\gs \in \Aut(A)$, and left action on $A^*$ given by $a \otimes \phi \mapsto \phi((-)a)$:
in total,  
$A^* \simeq {}_\gs A$ as $A$-bimodules with respect to this twisted action.
This
leads  for all $a,b \in A$ to the identity
 \begin{equation}
   \label{fagiolini}
 \gve(ab) = \gve(\gs(b)a).
 \end{equation}
The automorphism $\gs$ is determined up to inner automorphisms and usually called the {\em Na\-ka\-ya\-ma} automorphism.

 \subsubsection{Hochschild chains and cochains}
  Denote by $\Ae := A \otimes \Aop$ the enveloping algebra of an associative $k$-algebra $A$, and let $M$ be an $A$-bimodule, considered, equivalently, as a (right) $\Ae$-module. For any $A$ and any $M$, one defines the Hochschild chain resp.\ cochain spaces by
 $$
  C_\bull(A, M) = M \otimes_\Ae \mathrm{Bar}_\bull(A), \qquad
  C^\bull(A, M) = \Hom_\Ae(\mathrm{Bar}_\bull(A), M),
  $$
  where $\mathrm{Bar}_\bull(A) := A \otimes A^{\otimes \bullet} \otimes A$ denotes the {\em bar resolution} of $A$. Equivalently,
 $$
  C_\bull(A, M) = M \otimes A^{\otimes \bullet}, \qquad
  C^\bull(A, M) = \Hom(A^{\otimes \bullet}, M)
  $$
  as $k$-modules.
 In case of a Frobenius algebra as above such that ${}_\gs \! A \simeq A^*$ as right $\Ae$-modules, Hochschild chains and cochains are in duality: one has
 \begin{equation*}
   \begin{split}
     C_\bull(A, {}_\gs \! A)^* &=
     \Hom({}_\gs \! A \otimes_\Ae \mathrm{Bar}_\bull(A), k)
     \\
& \simeq
     \Hom_\Ae\big( \mathrm{Bar}_\bull(A), ({}_\gs \! A)^* \big)
     \\
     & \simeq
     \Hom_\Ae\big( \mathrm{Bar}_\bull(A), A\big) = C^\bull(A,A),
\end{split}
   \end{equation*}
 where the second line is simply the adjunction, whereas the third line arises from being Frobenius.
 Explicitly, this isomorphism is induced by the Frobenius functional by means of
 \begin{equation}
   \label{polpo}
\hat{} \, \colon C^p(A,A)
\to  C_p(A, {}_\gs \! A)^*, \quad f \mapsto \big\{ \, (a_0, \ldots, a_p) \mapsto 
\gve( a_0 f(a_1, \ldots, a_p)) \, \big\}.
   \end{equation}
Its inverse is less simple to describe, but fortunately we are not going to need it explicitly.

\begin{lemma}
  For a Frobenius algebra $(A,\varepsilon)$, 
there exists a unique operadic structure on $C_\bullet(A, {}_\gs \! A)^*$ such that the isomorphism \eqref{polpo}
extends to an operad isomorphism.
This becomes an operad with multiplication by means of the composition $\ass\to C^\bullet(A, A) \xrightarrow{ \hat{ }} C_\bullet(A, {}_\gs \! A)^*.$
\end{lemma}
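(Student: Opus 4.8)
The plan is to transport the well-known multiplicative operadic structure on Hochschild cochains $C^\bullet(A,A)$ across the duality isomorphism \eqref{polpo}. First I would recall that $C^\bullet(A,A)$ carries its classical (non-symmetric) operad structure given by the Gerstenhaber brace/circle operations $f \circ_i g$, together with the operad map $\ass \to C^\bullet(A,A)$ sending $\nu_2$ to the algebra multiplication $\mu_A$ and $\nu_0$ to the unit $1_A \in A = C^0(A,A)$; this is the standard "operad with multiplication" that underlies Hochschild cohomology. Since the hat map $\hat{}\colon C^p(A,A) \to C_p(A,{}_\gs\! A)^*$ of \eqref{polpo} is an isomorphism of graded $k$-modules (this is exactly what being Frobenius buys us, as recalled just above the statement), there is a \emph{unique} way to equip the target $C_\bullet(A,{}_\gs\! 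A)^*$ with partial compositions $\circ_i$ and a unit making $\hat{}$ into an isomorphism of operads, namely by conjugating: define $\alpha \circ_i \beta := \widehat{\hat{}^{-1}(\alpha) \circ_i \hat{}^{-1}(\beta)}$ and $\mathbbm{1} := \hat{}(\id_A)$. By transport of structure along an isomorphism, the operad axioms (associativity and unitality of the partial compositions) hold automatically on the target; this already proves both existence and uniqueness. The composite $\ass \to C^\bullet(A,A) \xrightarrow{\hat{}} C_\bullet(A,{}_\gs\! A)^*$ is then a composition of operad morphisms, hence an operad morphism, so it endows $C_\bullet(A,{}_\gs\! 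A)^*$ with the structure of an operad with multiplication.

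The substance of the lemma — and where a little real work is needed — is to make the transported structure explicit, i.e.\ to identify the operations $\circ_i$ on $C_\bullet(A,{}_\gs\! A)^*$ intrinsically (not merely as an abstract conjugate), since the subsequent use of this lemma will require knowing that $\hat{}^{-1}$ really does produce a \emph{cooperadic} degrafting structure on $C_\bullet(A,{}_\gs\! A)$ compatible with the Frobenius coproduct $\gD$ of \eqref{sonne}. So the key computation I would carry out is: unwind $\hat{}$ and its inverse using the counitality identities \eqref{mond1} and the twisted trace property \eqref{fagiolini}, and express $\alpha \circ_i \beta$ evaluated on a chain $(a_0, a_1, \ldots, a_n)$ in terms of $\alpha$, $\beta$, the multiplication of $A$, and the comultiplication $\gD$ applied to the "inner" arguments $a_1,\ldots$. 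Concretely one inserts $\gD(1) = \sum_i e_i \otimes e^i$ to split the coefficient and distribute it, using $A$-bilinearity of $\gD$; the Nakayama twist $\gs$ appears precisely because $\gve$ is only a \emph{twisted} trace, and \eqref{fagiolini} is what guarantees the insertion is well-defined modulo the bimodule relations defining $\otimes_\Ae$. This is the step I expect to be the main obstacle: keeping track of which $e_i$/$e^i$ summation goes on which tensor slot, and checking that the Frobenius coproduct being $A$-\emph{bilinear} (not just one-sided) is what makes the circle products land back in $C_\bullet(A,{}_\gs\! A)^*$ rather than in some larger space — essentially verifying that the naive dualisation of the cochain brace is well-defined only because of the Frobenius structure.

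Finally I would spell out the three distinguished elements of the resulting operad with multiplication, to match the pattern of Remark \ref{pane} and Remark \ref{aglio}: the binary multiplication $\mu^c \in C_\bullet(A,{}_\gs\! A)^*(2)$ is $\hat{}(\mu_A)$, i.e.\ the functional $(a_0,a_1,a_2) \mapsto \gve(a_0 a_1 a_2)$ (using that $\mu_A \in C^2(A,A)$ sends $(a_1,a_2)$ to $a_1 a_2$); the unit $\mathbbm{1}^c \in C_\bullet(A,{}_\gs\! A)^*(1)$ is $\hat{}(\id_A)$, i.e.\ $(a_0,a_1) \mapsto \gve(a_0 a_1)$; and the nullary $e^c \in C_\bullet(A,{}_\gs\! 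A)^*(0)$ is $\hat{}(1_A)$, i.e.\ $(a_0) \mapsto \gve(a_0)$. One checks associativity $\mu^c \circ_1 \mu^c = \mu^c \circ_2 \mu^c$ and unitality $\mu^c \circ_1 e^c = \mu^c \circ_2 e^c = \mathbbm{1}^c$ either directly from these formulas (a short computation using associativity of $A$ and $\gve$) or, more cheaply, by invoking that $\hat{}$ is an operad isomorphism and these identities already hold for $\mu_A$, $\id_A$, $1_A$ in $C^\bullet(A,A)$ since $\ass \to C^\bullet(A,A)$ is an operad map. With the explicit description in hand, this last verification is immediate.
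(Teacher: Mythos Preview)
Your proposal is correct and matches the paper's approach exactly: the paper also proceeds by transport of structure along the hat isomorphism, then explicitly computes $\hat f \circ_i \hat g$ on a chain by inserting $\gD(1)=\sum_i e_i\otimes e^i$ via \eqref{mond1}, and identifies the distinguished elements $\mu^c,\mathbbm{1}^c,e^c$ just as you describe. One minor remark: the Nakayama twist $\gs$ does not actually enter the partial composition formula itself (only \eqref{mond1} is used there), so the obstacle you anticipate is lighter than expected; $\gs$ first appears when computing the last face of the induced differential.
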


\noindent The actual construction happens, as expected, by transport of structure:
the requirement that the map $\hat{(-)}$ is an isomorphism of operads means that the partial composition maps
$
\circ_i \colon C_p(A, {}_\gs \! A)^* \otimes C_q(A, {}_\gs \! A)^*  \rightarrow C_{p+q-1}(A, {}_\gs \! A)^*
$
are uniquely determined by the equality
$$
\hat{f}\circ_i \hat{g} = \widehat{f\circ_i g}
$$ 
for all $f \in C^p(A,A)$, $g \in C^q(A,A)$, and $1 \leq i \leq p$.
 Explicitly, we can write 
 \begin{equation}
   \label{panna}
\begin{split}
  \hat{f} \circ_i \hat{g} (a_0,\ldots,a_{p+q-1})
  &= \widehat{f \circ_i g} (a_0,\dots,a_{p+q-1})
  \\
  &= \gve\big(a_0f\circ_i g (a_1,\dots,a_{p+q-1})\big)
  \\
  &= \gve\big(a_0f(a_1,\ldots,a_{i-1},g(a_i,\ldots,a_{i+q-1}),a_{i+q},\ldots,a_{p+q-1})\big)
  \\
  &= \textstyle\sum_i \gve\big(a_0f(a_1,\ldots,a_{i-1},e_i ,a_{i+q},\ldots,a_{p+q-1}))
\gve\big(e^i g(a_i,\ldots,a_{i+q-1})\big),
  \\
  &= \textstyle\sum_i \hat f(a_0, a_1,\ldots,a_{i-1},e_i ,a_{i+q},\ldots,a_{p+q-1}) \hat g
(e^i, a_i,\ldots,a_{i+q-1}),
  \\
  &= \textstyle\sum_i \hat f(a_0, a_1,\ldots,a_{i-1}, 1^{(1)} ,a_{i+q},\ldots,a_{p+q-1}) \hat g
(1^{(2)}, a_i,\ldots,a_{i+q-1}),
\end{split}
 \end{equation}
 where we used \eqref{mond1} in the penultimate step and
 $\gD(1) = 1^{(1)} \otimes 1^{(2)} = \sum_i e_i \otimes e^i$ in the last.
 The unit is given by
 $$ \mathbbm{1}(a_0,a_1)= \gve(a_0a_1),
 $$
 while the operad morphism
 $$
 \phi\colon \ass\rightarrow C^\bullet(A,A)
 $$
 sends the unique operation $\nu_n \in \ass(n)$ to the functional $\phi(\nu_n)\colon A\otimes A^n \to k$ acting as $\phi(\nu_n)(a_0,\ldots,a_n)= \gve(a_0\cdots a_n).$
 
 As $A$ is finite dimensional, the isomorphism $C_p(A, {}_\gs \! A)^{**} \simeq C_p(A, {}_\gs \! A)$ then yields:

\begin{coro}
  \label{ciliegie}
  For any Frobenius algebra $A$, the Hochschild chain complex $C_\bull(A, {}_\gs A)$ has the structure of a cooperad, 
where  
  the partial decomposition maps are defined as
  \begin{equation}
\label{dechoch}
    \begin{array}{rcl}
    \Delta_{p,q,i}\colon   C_{p+q-1}(A, {}_\gs \! A) & \to &  C_p(A, {}_\gs \! A)\otimes C_q(A, {}_\gs \! A),
\\
(a_0,\dots,a_{p+q-1})  & \mapsto & (a_0,\dots,a_{i-1},1^{(1)}, a_{i+q},\dots,a_{p+q-1}) \otimes (1^{(2)},a_i,\dots,a_{i+q-1}).
\end{array}
  \end{equation}
This cooperad is equipped with a comultiplication induced by the Frobenius functional and the multiplication in $A$, that is to say, 
\begin{equation}
  \label{merluzzo}
  \begin{array}{rclc}
  \mu^c \colon \ C_2(A, {}_\gs \! A) \to k, & (a_0, a_1, a_2) &\mapsto &\gve(a_0a_1a_2),
  \\
  \mathbbm{1}^c \colon \ C_1(A, {}_\gs \! A) \to k, & (a_0, a_1) &\mapsto & \gve(a_0a_1),
  \\
   e^c \colon \ C_0(A, {}_\gs \! A) \to k, & (a_0) &\mapsto & \gve(a_0),
\end{array}
\end{equation}
for any $a_0, a_1, a_2 \in A$.
\end{coro}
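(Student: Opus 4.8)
The plan is to obtain the cooperad (with comultiplication) structure on $C_\bull(A,{}_\gs A)$ by pure $k$-linear duality from the operad-with-multiplication structure on $C_\bull(A,{}_\gs A)^*$ provided by the preceding lemma, using that $A$---and hence every $C_p(A,{}_\gs A)={}_\gs A\otimes A^{\otimes p}$ and every $\ass(p)=k\nu_p$---is finite-dimensional over the field $k$. On finite-dimensional $k$-modules the functor $(-)^*$ is a contravariant symmetric monoidal self-equivalence, so it interchanges planar unital operads presented by partial compositions (as recalled after Definition~\ref{fragole}) with planar counital cooperads presented by partial decompositions (Definition~\ref{carciofo}): the flips $\gs$ occurring in the coassociativity relations \eqref{orzo}--\eqref{orzo4} are simply the symmetry of $\otimes$ made visible once the operadic associativity axioms are transposed and written with a fixed ordering of the three tensor factors, and the counit relations are the transposes of the operad unit axioms. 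Concretely, under the canonical finite-dimensional identifications $V^{**}\simeq V$ and $(V\otimes W)^*\simeq V^*\otimes W^*$ I would define $\Delta_{p,q,i}:=(\circ_i)^*$ and let $\epsilon$ be the transpose of the unit of $C_\bull(A,{}_\gs A)^*$, where the $\circ_i$ are the partial compositions of the preceding lemma; coassociativity and counitality then hold by construction, term by term, so nothing needs to be recomputed by hand. Working throughout with partial (de)compositions on fixed finite arities is also what makes this transport unproblematic, sidestepping the one subtlety of the comonoidal formulation, namely that the composition product $\circ$ carries the infinite sums $\bigoplus_h$ as soon as $C_0(A,{}_\gs A)\neq 0$.

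To read off the explicit formula \eqref{dechoch}, I would evaluate $\Delta_{p,q,i}(a_0,\dots,a_{p+q-1})$ against a decomposable functional $\hat f\otimes\hat g$ with $f\in C^p(A,A)$, $g\in C^q(A,A)$: by definition of the transpose this equals $(\hat f\circ_i\hat g)(a_0,\dots,a_{p+q-1})$, and the chain of equalities \eqref{panna} in the preceding lemma already rewrites this as
\[
\hat f(a_0,a_1,\dots,a_{i-1},1^{(1)},a_{i+q},\dots,a_{p+q-1})\;\hat g(1^{(2)},a_i,\dots,a_{i+q-1}),
\]
with the Sweedler summation of $\gD(1)=1^{(1)}\otimes 1^{(2)}=\sum_i e_i\otimes e^i$ understood. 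Since $f\mapsto\hat f$ is a linear isomorphism $C^\bull(A,A)\to C_\bull(A,{}_\gs A)^*$ and such functionals separate points of the finite-dimensional space $C_p(A,{}_\gs A)\otimes C_q(A,{}_\gs A)$, this pins $\Delta_{p,q,i}$ down to exactly the map displayed in \eqref{dechoch}; the nullary case $q=0$, where the block $a_i,\dots,a_{i+q-1}$ is empty, comes out the same way after one application of the counitality relation \eqref{mond1} to recognize a $0$-cochain $g\in A$ back inside the argument of $\hat f$.

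For the comultiplication, composing the isomorphism $f\mapsto\hat f$ with the operad morphism $\phi\colon\ass\to C^\bull(A,A)$ of the preceding lemma turns $C_\bull(A,{}_\gs A)^*$ into an operad with multiplication, and transposing (using $\ass^{**}\simeq\ass$ arity by arity, as in Remark~\ref{pane}) yields a cooperad morphism $\psi\colon C_\bull(A,{}_\gs A)\to\ass^*$, i.e.\ the required comultiplication. Through the dictionary of Remark~\ref{pane} the three distinguished elements $\mu^c\in C_2(A,{}_\gs A)^*$, $\mathbbm{1}^c\in C_1(A,{}_\gs A)^*$, $e^c\in C_0(A,{}_\gs A)^*$ are the images under $f\mapsto\hat f$ of the multiplication $a\otimes b\mapsto ab$, of $\mathrm{id}_A$, and of the unit $\eta\colon k\to A$ respectively; evaluating via \eqref{polpo} gives $\mu^c(a_0,a_1,a_2)=\gve(a_0a_1a_2)$, $\mathbbm{1}^c(a_0,a_1)=\gve(a_0a_1)$, and $e^c(a_0)=\gve(a_0)$, which is precisely \eqref{merluzzo}. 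The only place where genuine care is needed is the bookkeeping around the nullary slots---checking that the insertions of $1^{(1)}\otimes 1^{(2)}$ and the positions of the flips $\gs$ in the case-split relations \eqref{orzo2}--\eqref{orzo4} line up with the operad axioms under transposition---but this is entirely mechanical and presents no conceptual obstacle.
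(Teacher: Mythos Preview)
Your proposal is correct and follows essentially the same route as the paper: dualise the operad-with-multiplication structure on $C_\bull(A,{}_\gs A)^*$ from the preceding lemma via the finite-dimensional identifications $V^{**}\simeq V$ and $(V\otimes W)^*\simeq V^*\otimes W^*$, and read off the explicit formulas from the chain of equalities \eqref{panna}. If anything, you are more careful than the paper's one-line proof---your remark about working with partial (de)compositions to avoid the infinite direct sum in $\circ$ when $C_0\neq 0$, and your explicit treatment of the nullary case via \eqref{mond1}, are both worth making and are not spelled out in the original.
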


\begin{proof}
  We just need to check that the isomorphism $C_p(A, {}_\gs \! A)^{**} \simeq C_p(A, {}_\gs \! A)$ sends the dual of the composition product for $C_p(A, {}_\gs \! A)^*$ to the maps defined in \eqref{dechoch}, which is evident from looking at the arguments on the left and right hand side of \eqref{panna}, along with
  $
  \big( C_p(A, {}_\gs \! A)\otimes C_q(A, {}_\gs \! A)\big)^*
  \simeq
C_p(A, {}_\gs \! A)^* \otimes C_q(A, {}_\gs \! A)^*,
$
which is true since $A$ is finite dimensional. 
\end{proof}

\begin{lemma}
  \label{more}
  The differential \eqref{olio} resp.\ \eqref{olio1} coincides with the classical Hochschild differential. Hence, the homology of the cooperad $C_\bull(A, {}_\gs \! A)$ induced by its comultipli\-cation in the sense of Definition \ref{zenzero} coincides with the Hochschild homology of the Frobenius algebra $A$. 
  \end{lemma}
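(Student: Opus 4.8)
The plan is to prove the first assertion — that the cooperadic differential \eqref{olio1} agrees with the classical Hochschild boundary on $C_\bull(A, {}_\gs A)$ — by a direct substitution of the explicit decomposition maps \eqref{dechoch} and of the comultiplication element $\mu^c$ from \eqref{merluzzo} into \eqref{olio1}, evaluated on an elementary chain $(a_0, \ldots, a_n) \in C_n(A, {}_\gs A) = A \otimes A^{\otimes n}$, and then simplified using the Frobenius counitality relations \eqref{mond1} together with the Nakayama relation \eqref{fagiolini}. Once the two boundary operators coincide, the second assertion is immediate: the cooperadic homology in the sense of Definition \ref{zenzero} and Hochschild homology are then the homologies of literally the same chain complex, so nothing further is needed.

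First I would treat the interior faces $d_i = (\id \otimes \mu^c) \circ \Delta_{n-1,2,i}$ for $1 \le i \le n-1$. Here $\Delta_{n-1,2,i}(a_0, \ldots, a_n) = (a_0, \ldots, a_{i-1}, 1^{(1)}, a_{i+2}, \ldots, a_n) \otimes (1^{(2)}, a_i, a_{i+1})$, and applying $\mu^c$ to the second tensor factor produces the scalar $\gve(1^{(2)} a_i a_{i+1})$, so that the slot occupied by $1^{(1)}$ becomes $\Sum_j e_j\, \gve(e^j a_i a_{i+1}) = a_i a_{i+1}$ by \eqref{mond1}; hence $d_i(a_0, \ldots, a_n) = (a_0, \ldots, a_{i-1}, a_i a_{i+1}, a_{i+2}, \ldots, a_n)$, which is exactly the $i^{\mathrm{th}}$ Hochschild face. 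For the outer face $d_0 = (\mu^c \otimes \id) \circ \Delta_{2,n-1,2}$ the decomposition gives $(a_0, a_1, 1^{(1)}) \otimes (1^{(2)}, a_2, \ldots, a_n)$, and $\Sum_i \gve(a_0 a_1 e_i)\, e^i = a_0 a_1$ by \eqref{mond1} puts $a_0 a_1$ into the module slot, so $d_0(a_0, \ldots, a_n) = (a_0 a_1, a_2, \ldots, a_n)$.

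The one face using genuinely more than plain Frobenius counitality, and the step I expect to be the only delicate point, is the last one, $d_n = (\mu^c \otimes \id) \circ \Delta_{2,n-1,1}$. Here $\Delta_{2,n-1,1}(a_0, \ldots, a_n) = (a_0, 1^{(1)}, a_n) \otimes (1^{(2)}, a_1, \ldots, a_{n-1})$, and applying $\mu^c$ yields $\Sum_i \gve(a_0 e_i a_n)\, e^i$ in the module slot. To identify this as a Hochschild expression one must commute $a_n$ past $a_0 e_i$ via the Nakayama identity \eqref{fagiolini}, $\gve(a_0 e_i a_n) = \gve(\gs(a_n) a_0 e_i)$, after which \eqref{mond1} gives $\Sum_i \gve(\gs(a_n) a_0 e_i)\, e^i = \gs(a_n) a_0$; hence $d_n(a_0, \ldots, a_n) = (\gs(a_n) a_0, a_1, \ldots, a_{n-1})$, which is precisely the last Hochschild face for the twisted bimodule ${}_\gs A$ whose left action is $a \cdot b = \gs(a) b$. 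Assembling these computations with the signs prescribed by \eqref{olio1} — coefficient $+1$ on $d_0$, $(-1)^i$ on each $d_i$, and $(-1)^n$ on $d_n$ — identifies $d = \Sum_{i=0}^{n} (-1)^i d_i$ with the classical Hochschild boundary on $C_\bull(A, {}_\gs A)$, which settles both assertions. The whole argument is thus pure bookkeeping apart from the single use of the Nakayama automorphism in the top face, so, in the spirit of Lemma \ref{sesamo}, one could reasonably compress the details or leave them to the reader.
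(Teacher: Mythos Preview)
Your proposal is correct and follows essentially the same approach as the paper's own proof: both verify the identity face by face, computing $d_i$ for $1 \le i \le n-1$ and $d_n$ via the explicit decomposition maps \eqref{dechoch}, the element $\mu^c$ from \eqref{merluzzo}, the Frobenius counitality \eqref{mond1}, and the Nakayama relation \eqref{fagiolini} for the top face. The only cosmetic difference is that you also spell out the computation for $d_0$, which the paper leaves to the reader.
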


\begin{proof}
  Recall first that the Hochschild chain space $C_\bull(A, {}_\gs \! A)$, as a simplicial $k$-module, is equipped with the faces $d_i(a_0, \ldots, a_n) = (a_0, \ldots, a_ia_{i+1}, \ldots, a_n)$ for $0 \leq i \leq n-1$, along with $d_n(a_0, \ldots, a_n) = (\gs(a_n)a_0, a_1, \ldots, a_{n-1})$. A direct computation gives that these are precisely the ones defined in Eqs.~\eqref{riso1}--\eqref{riso3} with respect to the cooperadic decomposition \eqref{dechoch} and the (co)multiplication elements \eqref{merluzzo}: using $\gD(1) = \sum_j e_j \otimes e^j$, one has
  \begin{equation*}
    \begin{split}
      d_i(a_0, \ldots, a_n)
      &= (\id \otimes \mu^c) \circ \gD_{n-1,2,i}(a_0, \ldots, a_n) 
      \\
      &= (\id \otimes \mu^c) \big((a_0, \ldots, a_{i-1}, e_j, a_{i+2}, \ldots, a_n) \otimes (e^j, a_i, a_{i+1}) \big)
      \\
      &= (a_0, \ldots, a_{i-1}, \gve(e^ja_ia_{i+1}) e_j, a_{i+2}, \ldots, a_n) 
  \\
      &= (a_0, \ldots, a_{i-1}, a_ia_{i+1}, a_{i+2}, \ldots, a_n) 
        \end{split}
    \end{equation*}
for $1 \leq i \leq n-1$, where we used \eqref{mond1} in the last step.  Similarly,
  \begin{equation*}
    \begin{split}
      d_n(a_0, \ldots, a_n)
      &= (\mu^c \otimes \id) \circ \gD_{2,n-1,1}(a_0, \ldots, a_n) 
      \\
      &= (\mu^c \otimes \id)\big( (a_0, e_j , a_n) \otimes (e^j, a_1, \ldots, a_{n-1} )\big)
      \\
      &= \big(\gve(a_0e_ja_n) e^j, a_1, \ldots, a_{n-1} \big)
  \\
      &= \big(\gve(\gs(a_n) a_0 e_j) e^j, a_1, \ldots, a_{n-1} \big)
     \\
      &= (\gs(a_n) a_0, a_1, \ldots, a_{n-1}),
    \end{split}
    \end{equation*}
where we used \eqref{fagiolini} in the penultimate, and \eqref{mond1} in the last step. The case for $d_0$ is similar and left to the reader.
  \end{proof}

Wrapping up, combining Corollary \ref{ciliegie} \& Lemma \ref{more} with our main result in Theorem~\ref{asparagi}, we obtain a Gerstenhaber coalgebra structure on Hochschild homology for Frobenius algebras:

\begin{prop}
  \label{limone}
The Hochschild homology $H_\bull(A, {}_\gs \! A)$ of any Frobenius algebra $(A,\gve)$ carries the structure of a Gerstenhaber coalgebra. In particular, if $A$ is $k$-projective, then the groups $\Tor_\bull^\Ae({}_\gs \! A,A)$ form a Gerstenhaber coalgebra.  
  \end{prop}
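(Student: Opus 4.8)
The plan is to assemble the three building blocks that are already in place. By Corollary~\ref{ciliegie}, the Hochschild chain complex $C_\bull(A, {}_\gs \! A)$ of a Frobenius algebra $(A,\gve)$ is a cooperad with comultiplication: the partial decompositions are those of \eqref{dechoch} (which rest precisely on the $A$-bilinear Frobenius coproduct $\Delta(1) = \sum_i e_i \otimes e^i$, this being the additional structure alluded to in the introduction), and the distinguished multiplicative data $\mu^c, \mathbbm{1}^c, e^c$ are as in \eqref{merluzzo}. With this input, Theorem~\ref{asparagi} applies directly and yields that the homology of $\big(C_\bull(A, {}_\gs \! A), d\big)$, with $d$ the differential \eqref{olio} induced by the comultiplication, carries a Gerstenhaber coalgebra structure in the sense of Definition~\ref{bombadacqua}: a graded cocommutative cup coproduct \eqref{aceto} together with a graded coantisymmetric cobracket coming from \eqref{arancia}, compatible in the coLeibniz way. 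I emphasise that all these structure maps, as well as the homotopies of Propositions~\ref{farro} and~\ref{miglio}, are built solely from the cooperad-with-comultiplication data, so nothing new needs to be verified at this point.

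The second step is to identify the homology groups concerned. By Lemma~\ref{more}, the differential \eqref{olio} coincides on the nose with the classical Hochschild differential on $C_\bull(A, {}_\gs \! A)$; hence $H_\bull\big(C_\bull(A, {}_\gs \! A), d\big) = H_\bull(A, {}_\gs \! A)$ is exactly the Hochschild homology with coefficients in the twisted bimodule, which proves the first assertion. For the second, I would invoke the standard fact that the bar complex $\mathrm{Bar}_\bull(A) = A \otimes A^{\otimes \bull} \otimes A$ resolves $A$ by $\Ae$-modules that are projective as soon as $A$ is $k$-projective (automatically so here, since $k$ is a field and $A$ is finite-dimensional). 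Consequently
\[
H_\bull(A, {}_\gs \! A) = H_\bull\big({}_\gs \! A \otimes_\Ae \mathrm{Bar}_\bull(A)\big) = \Tor^\Ae_\bull({}_\gs \! A, A),
\]
so these Tor groups inherit the Gerstenhaber coalgebra structure, in particular the graded cocommutative $k$-coalgebra structure given by $\cup^c$.

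No genuine obstacle remains at this stage: the technical heart of the matter has already been dealt with in Corollary~\ref{ciliegie} (construction of the cooperad by transport of structure through the Frobenius duality) and in Theorem~\ref{asparagi} (which aggregates Corollary~\ref{sushi} and Propositions~\ref{farro} and~\ref{miglio}). The one point that does deserve a moment's care is bookkeeping: one must make sure that the comparison isomorphism used in Lemma~\ref{more} to match the two differentials is compatible with the one underlying Corollary~\ref{ciliegie}, so that the resulting structure genuinely lives on $H_\bull(A,{}_\gs\!A)$ as classically defined, rather than on a twisted variant. Once this is observed, the proposition follows by simply quoting Theorem~\ref{asparagi}.
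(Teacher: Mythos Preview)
Your proposal is correct and follows exactly the same approach as the paper, which simply states that the result is obtained by combining Corollary~\ref{ciliegie} and Lemma~\ref{more} with Theorem~\ref{asparagi}. Your added remark on the bar resolution for the $\Tor$ identification and the compatibility check are welcome clarifications, but the argument is identical to the paper's.
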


\smallskip

\begin{rmk}
The structure of a (graded cocommutative) coproduct on Hochschild homology at least for symmetric Frobenius algebras, {\em i.e.}, those for which the Nakayama automorphism $\gs$ equals the identity has been observed before, for example, in \cite[\S6.5]{WahWes:HHOSA}. Computing \eqref{aceto} with the help of the decomposition \eqref{dechoch} and the special elements in \eqref{merluzzo} yields 
\begin{equation*}
\begin{split}
\cup^c(a_0, \ldots, a_n)
 = \ &
\Sum^{n+1}_{j=1}
  (\mu^c \otimes \id^{\otimes 2}) \circ (\gD_{2,j-1,1} \otimes \id) \circ \gD_{j,n+1-j,j} (a_0,\ldots, a_n)
  \\
  = \ &
\Sum_i \Sum^{n+1}_{j=1} 
(\mu^c \otimes \id^{\otimes 2}) \circ (\gD_{2,j-1,1} \otimes \id) \big( (a_0,\ldots, a_{j-1}, e_i) \otimes (e^i, a_{j}, \ldots, a_n)
\big)
  \\
  = \ &
  \Sum_{i,k} \Sum^{n+1}_{j=1} 
(\mu^c \otimes \id^{\otimes 2}) \big( (a_0, e_k, e_i) \otimes (e^k,a_1, \ldots, a_{j-1}) \otimes (e^i, a_{j}, \ldots, a_n)
\big)
  \\
  = \ &
   \Sum_{i,k} \Sum^{n+1}_{j=1} 
\big( e^k,a_1, \ldots, a_{j-1}\big) \otimes \big( \gve(a_0e_k e_i) e^i, a_{j}, \ldots, a_n\big )
  \\
  = \ &
  \Sum_{k} \Sum^{n+1}_{j=1}
  (e^k,a_1, \ldots, a_{j-1}\big) \otimes \big( a_0e_k, a_{j}, \ldots, a_n)
  \\
  = \ &
\Sum^{n+1}_{j=1} 
( a_0^{(2)} ,a_1, \ldots, a_{j-1}\big) \otimes \big( a_0^{(1)}, a_{j}, \ldots, a_n),
 \end{split}
\end{equation*}
where we also used \eqref{mond1} along with \eqref{sonne} in the last two steps, and 
which, up to a sign convention, coincides with the expression in {\em op.~cit.}
\end{rmk}

\begin{rmk}
 At the moment, it is unclear what a necessary criterion on an arbitrary associative algebra $A$ could be in order to obtain a cooperadic structure on its Hochschild chains with values in $A$ itself, but if there {\em are} any, it is immediately clear that the cooperadic structure found on bialgebra chains as in \S\ref{gelato} cannot be generalised to any {\em bialgebroid} $(U,A)$, as a sort of bialgebra over a noncommutative ring, since the Hochschild case would be an example that could be deduced from the {\em enveloping} bialgebroid $(\Ae, A)$.
  \end{rmk}

\appendix 
\section{Full proof of Proposition \ref{miglio}}\label{appendix}

\begin{proof}[Proof of Proposition \ref{miglio}]
To start with, recall the different signs of the various tensor flips $\gs, \rho$, and $\gvr$ in \eqref{piadina}.
  Proving first Eq.~\eqref{ciambella1} is not really difficult but still long enough. Indeed, its left hand side reads, by using the explicit formul\ae{} for the cup coproduct and the cobraces:
  \begin{equation*}
    \begin{array}{rcl}
      &&    
      (\id \otimes \cup^c) \circ [-]_n^\coop
     \\[1mm]
   &
      \stackrel{\scriptscriptstyle{}}{=}
      &
(\id \otimes \cup^c) \circ \rho \circ  [-]_n
      \\[1mm]
   &
      \stackrel{\scriptscriptstyle{}}{=}
      &
      (\gvr \otimes \id) \circ (\id \otimes \rho) \circ  (\cup^c \otimes \id) \circ  [-]_n
      \\[1mm]
   &
      \stackrel{\scriptscriptstyle{\eqref{sale}, \eqref{aceto}}}{=}
      &
      \Sum_{p+q=n+1} \Sum^p_{i=1} \Sum^{p+1}_{j=1}  (-1)^{\foo (q-1)(i-1)} (\gs \otimes \id) \circ (\id \otimes \gs) \circ (\mu^c \otimes \id^{\otimes 3}) \circ (\gD_{2,j-1, 1} \otimes \id^{\otimes 2})
      \\[1mm]
      && \hspace*{8.1cm}
      \circ (\gD_{j,p+1-j, j} \otimes \id) \circ \gD_{p,q,i}
      \\[1mm]
   &
      \stackrel{\scriptscriptstyle{\eqref{sale2}}}{=}
      &
      \Sum^{n+1}_{p=1} \Sum^{p+1}_{j=1} \Sum^{p}_{i=1}  (-1)^{\foo (n-p)(i-1)} (\mu^c \otimes \id^{\otimes 3}) \circ (\id \otimes \gs \otimes \id) \circ (\id^{\otimes 2} \otimes \gs) \circ  (\gD_{2,j-1, 1} \otimes \id^{\otimes 2})
      \\[1mm]
      && \hspace*{8.1cm}
      \circ (\gD_{j,p+1-j, j} \otimes \id) \circ \gD_{p,n+1-p,i},
\\[1mm]
      &
      \stackrel{\scriptscriptstyle{}}{=}
      &
      \pig(
\Sum^{n+1}_{p=1} \Sum^{p+1}_{j=2} \Sum^{j-1}_{i=1}
+
 \Sum^{n+1}_{p=1} \Sum^{p}_{j=1} \Sum^{p}_{i=j}  
\pig) (-1)^{\foo (n-p)(i-1)} (\mu^c \otimes \id^{\otimes 3}) \circ (\id \otimes \gs \otimes \id) \circ (\id^{\otimes 2} \otimes \gs) 
      \\[1mm]
      && \hspace*{5.1cm}
\circ  (\gD_{2,j-1, 1} \otimes \id^{\otimes 2})
      \circ (\gD_{j,p+1-j, j} \otimes \id) \circ \gD_{p,n+1-p,i},
    \end{array}
\end{equation*}
whereas the second summand on the right hand side of \eqref{ciambella1} comes out as:
  \begin{equation*}
    \begin{array}{rcl}
      &&    
(\gvr \otimes \id) \circ (\id \otimes [-]^\coop) \circ \cup^c
      \\[1mm]
   &
      \stackrel{\scriptscriptstyle{}}{=}
      &
     (\gvr \otimes \id) \circ (\id \otimes \rho) \circ (\id \otimes [-]) \circ \cup^c
      \\[1mm]
    &
      \stackrel{\scriptscriptstyle{\eqref{aceto}}}{=}
      &
      \Sum^{n+1}_{j=1}  (\gvr \otimes \id) \circ (\id \otimes \rho) \circ
(\id \otimes [-]_{n+1-j})
\circ       (\mu^c \otimes \id^{\otimes 2}) \circ (\gD_{2,j-1, 1} \otimes \id) \circ \gD_{j,n+1-j, j}
      \\[1mm]
   &
      \stackrel{\scriptscriptstyle{\eqref{sale}}}{=}
      &
      \Sum^{n+1}_{j=1}  \Sum_{p+q=n+2-j} \Sum^p_{i=1} (-1)^{\foo (q-1)(p-i)}
 (\mu^c \otimes \id^{\otimes 3}) \circ
      (\id \otimes \gvr \otimes \id) \circ (\id^{\otimes 2} \otimes \rho)
      \circ
      (\id^{\otimes 2} \otimes \gD_{p,q,i})
      \\[1mm]
      && \hspace*{8.1cm}
\circ       (\gD_{2,j-1, 1} \otimes \id) \circ \gD_{j,n+1-j, j}
      \\[1mm]
   &
      \stackrel{\scriptscriptstyle{\eqref{sale2}}}{=}
      &
      \Sum^{n+1}_{j=1}  \Sum^{n+2-j}_{p=1} \Sum^p_{i=1} (-1)^{\foo (n+1-j-p)(p-i)}
 (\mu^c \otimes \id^{\otimes 3}) \circ
      (\id \otimes \gvr \otimes \id) \circ (\id^{\otimes 2} \otimes \rho)
            \circ
      (\gD_{2,j-1, 1} \otimes \id^{\otimes 2})
      \\[1mm]
      && \hspace*{8.1cm}
      \circ       
(\id \otimes \gD_{p,n+2-j-p,i})
      \circ \gD_{j,n+1-j, j}
      \\[1mm]
   &
      \stackrel{\scriptscriptstyle{\eqref{orzo}}}{=}
      &
      \Sum^{n+1}_{j=1}  \Sum^{n+2-j}_{p=1} \Sum^p_{i=1} (-1)^{\foo (n+1-j-p)(p-i)}
 (\mu^c \otimes \id^{\otimes 3}) \circ
(\id \otimes \gvr \otimes \id) \circ (\id^{\otimes 2} \otimes \rho)
      \circ
      (\gD_{2,j-1, 1} \otimes \id^{\otimes 2})
      \\[1mm]
      && \hspace*{8.1cm}
      \circ       
(\gD_{j,p,j} \otimes \id)
      \circ \gD_{p+j-1,n+2-j-p, i+j-1}
      \\[1mm]
 &
      \stackrel{\scriptscriptstyle{}}{=}
      &
      \Sum^{n+1}_{j=1}  \Sum^{n+1}_{p=j} \Sum^{p-j+1}_{i=1} (-1)^{\foo (n-p)(p-i-j+1)}
      (\mu^c \otimes \id^{\otimes 3}) \circ
      (\id \otimes \gvr \otimes \id) \circ (\id^{\otimes 2} \otimes \rho)
            \circ
      (\gD_{2,j-1, 1} \otimes \id^{\otimes 2})
      \\[1mm]
      && \hspace*{8.1cm}
      \circ       
(\gD_{j,p+1-j,j} \otimes \id)
      \circ \gD_{p,n+1-p, i+j-1}
      \\[1mm]
 &
      \stackrel{\scriptscriptstyle{}}{=}
      &
      \Sum^{n+1}_{j=1}  \Sum^{n+1}_{p=j} \Sum^{p}_{i=j} (-1)^{\foo (n-p)(p-i)}
 (\mu^c \otimes \id^{\otimes 3}) \circ
(\id \otimes \gvr \otimes \id) \circ (\id^{\otimes 2} \otimes \rho)
      \circ
      (\gD_{2,j-1, 1} \otimes \id^{\otimes 2})
      \\[1mm]
      && \hspace*{8.1cm}
      \circ       
(\gD_{j,p+1-j,j} \otimes \id)
      \circ \gD_{p,n+1-p, i}
      \\[1mm]
       &
      \stackrel{\scriptscriptstyle{}}{=}
      &
      \Sum^{n+1}_{p=1}  \Sum^{p}_{j=1} \Sum^{p}_{i=j} (-1)^{\foo (n-p)(i-1)}
 (\mu^c \otimes \id^{\otimes 3}) \circ
      (\id \otimes \gs \otimes \id) \circ (\id^{\otimes 2} \otimes \gs) \circ
      (\gD_{2,j-1, 1} \otimes \id^{\otimes 2})
      \\[1mm]
      && \hspace*{8.1cm}
      \circ       
(\gD_{j,p+1-j,j} \otimes \id)
      \circ \gD_{p,n+1-p, i},
      \end{array}
\end{equation*}
where we re-indexed $p \mapsto p+j-1$ in the sixth step and $i \mapsto i+j-1$ in the seventh, and which is the second triple sum in the above expression for 
$(\id \otimes \cup^c) \circ [-]_n^\coop$.
On the other hand, the first summand on the right hand side of \eqref{ciambella1} results into:
  \begin{equation*}
    \begin{array}{rcl}
      &&    
([-]^\coop \otimes \id) \circ \cup^c
      \\[1mm]
   &
      \stackrel{\scriptscriptstyle{}}{=}
      &
      (\rho \otimes \id) \circ ([-] \otimes \id) \circ \cup^c
      \\[1mm]
    &
      \stackrel{\scriptscriptstyle{\eqref{aceto}}}{=}
      &
      \Sum^{n+1}_{j=1}  (\rho \otimes \id) \circ
([-]_{j-1} \otimes \id)
\circ       (\mu^c \otimes \id^{\otimes 2}) \circ (\gD_{2,j-1, 1} \otimes \id) \circ \gD_{j,n+1-j, j}
      \\[1mm]
   &
      \stackrel{\scriptscriptstyle{\eqref{sale}}}{=}
      &
      \Sum^{n+1}_{j=1}  \Sum_{p+q=j} \Sum^p_{i=1} (-1)^{\foo (q-1)(i-1)}
 (\mu^c \otimes \id^{\otimes 3}) \circ
      (\id \otimes \gs \otimes \id) \circ
      (\id \otimes \gD_{p,q,i} \otimes \id)
      \\[1mm]
      && \hspace*{8.5cm}
\circ       (\gD_{2,j-1, 1} \otimes \id) \circ \gD_{j,n+1-j, j}
      \\[1mm]
    &
      \stackrel{\scriptscriptstyle{\eqref{sale2}}}{=}
      &
      \Sum^{n+1}_{j=1}  \Sum^{j}_{p=1} \Sum^p_{i=1} (-1)^{\foo (j-p-1)(i-1)}
 (\mu^c \otimes \id^{\otimes 3}) \circ
      (\id \otimes \gs \otimes \id) \circ
      (\id \otimes \gD_{p,j-p,i} \otimes \id)
      \\[1mm]
     && \hspace*{8.5cm}
 \circ       (\gD_{2,j-1, 1} \otimes \id) \circ \gD_{j,n+1-j, j}
      \\[1mm]
    &
      \stackrel{\scriptscriptstyle{\eqref{orzo}}}{=}
      &
            \Sum^{n+1}_{j=1}  \Sum^{j}_{p=1} \Sum^p_{i=1} (-1)^{\foo (j-p-1)(i-1)}
 (\mu^c \otimes \id^{\otimes 3}) \circ
      (\id \otimes \gs \otimes \id) \circ
      (\gD_{2,p,1} \otimes \id^{\otimes 2})
    \\[1mm]
     && \hspace*{8.5cm}
\circ       (\gD_{p+1,j-p, i} \otimes \id) \circ \gD_{j,n+1-j, j}
      \\[1mm]
    &
      \stackrel{\scriptscriptstyle{\eqref{orzo}}}{=}
      &
       \Sum^{n+1}_{j=1}  \Sum^{j}_{p=1} \Sum^p_{i=1} (-1)^{\foo (j-p-1)(i-1)}
 (\mu^c \otimes \id^{\otimes 3}) \circ
      (\id \otimes \gs \otimes \id) \circ (\id^{\otimes 2} \otimes \gs) \circ
      (\gD_{2,p,1} \otimes \id^{\otimes 2})
      \\[1mm]
     && \hspace*{7.1cm}
\circ       (\gD_{p+1,n+1-j, p+1} \otimes \id) \circ \gD_{n+p+1-j,j-p, i}
      \\[1mm]
    &
      \stackrel{\scriptscriptstyle{}}{=}
      &
       \Sum^{n+1}_{p=1}  \Sum^{p}_{j=1} \Sum^j_{i=1} (-1)^{\foo (p-j-1)(i-1)}
 (\mu^c \otimes \id^{\otimes 3}) \circ
      (\id \otimes \gs \otimes \id) \circ (\id^{\otimes 2} \otimes \gs) \circ
      (\gD_{2,j,1} \otimes \id^{\otimes 2})
      \\[1mm]
     && \hspace*{7.1cm}
\circ       (\gD_{j+1,n+1-p, j+1} \otimes \id) \circ \gD_{n+j+1-p,p-j, i}
      \\[1mm]
    &
      \stackrel{\scriptscriptstyle{}}{=}
      &
       \Sum^{n+1}_{j=1}  \Sum^{n+1}_{p=j} \Sum^j_{i=1} (-1)^{\foo (p-j-1)(i-1)}
 (\mu^c \otimes \id^{\otimes 3}) \circ
      (\id \otimes \gs \otimes \id) \circ (\id^{\otimes 2} \otimes \gs) \circ
      (\gD_{2,j,1} \otimes \id^{\otimes 2})
      \\[1mm]
     && \hspace*{7.1cm}
\circ       (\gD_{j+1,n+1-p, j+1} \otimes \id) \circ \gD_{n+j+1-p,p-j, i}
\\[1mm]
    &
      \stackrel{\scriptscriptstyle{}}{=}
      &
       \Sum^{n+1}_{j=1}  \Sum^{n+1}_{p=j} \Sum^j_{i=1} (-1)^{\foo (n-p)(i-1)}
 (\mu^c \otimes \id^{\otimes 3}) \circ
      (\id \otimes \gs \otimes \id) \circ (\id^{\otimes 2} \otimes \gs) \circ
      (\gD_{2,j,1} \otimes \id^{\otimes 2})
      \\[1mm]
     && \hspace*{8.1cm}
\circ       (\gD_{j+1,p-j, j+1} \otimes \id) \circ \gD_{p,n-p+1, i}
      \\[1mm]
    &
      \stackrel{\scriptscriptstyle{}}{=}
      &
       \Sum^{n+1}_{p=1}  \Sum^{p+1}_{j=2} \Sum^{j-1}_{i=1} (-1)^{\foo (n-p)(i-1)}
 (\mu^c \otimes \id^{\otimes 3}) \circ
      (\id \otimes \gs \otimes \id) \circ (\id^{\otimes 2} \otimes \gs) \circ
      (\gD_{2,j-1,1} \otimes \id^{\otimes 2})
      \\[1mm]
     && \hspace*{8.1cm}
\circ       (\gD_{j,p+1-j, j} \otimes \id) \circ \gD_{p,n+1-p, i},
      \\[1mm]
    \end{array}
\end{equation*}
where in the seventh step we simply exchanged the r\^oles of $j$ and $p$, in the ninth re-indexed $p \mapsto n-p+j+1$, and in the last step re-indexed $j \mapsto j-1$, along with the customary double sum yoga. We thus obtain the first triple sum in the expression of 
$(\id \otimes \cup^c) \circ [-]_n^\coop$ as well,
and hence Eq.~\eqref{ciambella1} is proven.
As for proving the identity \eqref{ciambella2}, let us start by writing its first line, {\em i.e.}, the terms that constitute the coLeibniz rule. 
The first one on the right hand side, by using the relations \eqref{orzo}--\eqref{orzo4}, can be transformed into:
\begin{equation*}
    \begin{array}{rcl}
      &&    
([-] \otimes \id) \circ \cup^c
      \\[1mm]
   &
      \stackrel{\scriptscriptstyle{\eqref{aceto}, \eqref{sale2}}}{=}
      &
      \Sum^{n+1}_{j=1} \Sum^{j}_{p=1} \Sum^{p}_{i=1} 
      (-1)^{\foo (j-p-1)(p-i)}
      (\mu^c \otimes \id^{\otimes 3}) \circ (\id \otimes \Delta_{p, j-p, i} \otimes \id) \circ (\Delta_{2, j-1, 1} \otimes \id) \circ \gD_{j,n+1-j,j}
      \\[1mm]
   &
      \stackrel{\scriptscriptstyle{\eqref{orzo}}}{=}
      &
      \Sum^{n+1}_{p=1} \Sum^{n+1}_{j=p} \Sum^{p}_{i=1} 
      (-1)^{\foo (j-p-1)(p-i)}
      (\mu^c \otimes \id^{\otimes 3}) \circ (\Delta_{2, p, 1} \otimes \id^{\otimes 2}) \circ (\Delta_{p+1, j-p, i} \otimes \id) \circ \gD_{j,n+1-j,j}
\\[1mm]
      &
      \stackrel{\scriptscriptstyle{}}{=:}
      &
       (1)
    \end{array}
  \end{equation*}
on $\cC(n)$,
while the second term on the right hand side becomes:
\begin{equation*}
    \begin{array}{rcl}
      &&    
(\gvr \otimes \id) \circ (\id \otimes [-]) \circ \cup^c
      \\[1mm]
       &
      \stackrel{\scriptscriptstyle{\eqref{aceto}, \eqref{sale2}}}{=}
      &
      \Sum^{n+1}_{j=1} \Sum^{n+2-j}_{p=1} \Sum^{p}_{i=1} 
      (-1)^{\foo (n+1-j-p)(p-i)}
      (\mu^c \otimes \id^{\otimes 3}) \circ
( \id \otimes \gvr \otimes \id)
      \circ
      (\id^{\otimes 2} \otimes \Delta_{p, n+2-j-p, i})
      \\[1mm]
      &&
      \hspace*{8cm}
      \circ (\Delta_{2, j-1, 1} \otimes \id) \circ \gD_{j,n+1-j,j}
                 \end{array}
    \end{equation*}

    
       \begin{equation*}
    \begin{array}{rcl}
      &
      \stackrel{\scriptscriptstyle{}}{=}
      &
      \Sum^{n+1}_{j=1} \Sum^{n+2-j}_{p=1} \Sum^{p}_{i=1} 
      (-1)^{\foo (n+1-j-p)(p-i)}
      (\mu^c \otimes \id^{\otimes 3}) \circ
( \id \otimes \gvr \otimes \id)
      \circ (\Delta_{2, j-1, 1} \otimes \id^{\otimes 2})
      \\[1mm]
      &&
      \hspace*{8cm}
            \circ
      (\id \otimes \Delta_{p, n+2-j-p, i}) \circ 
      \gD_{j,n+1-j,j}
      \\[1mm]
   &
      \stackrel{\scriptscriptstyle{\eqref{orzo}}}{=}
      &
      \Sum^{n+1}_{j=1} \Sum^{n+2-j}_{p=1} \Sum^{p}_{i=1} 
      (-1)^{\foo (n+1-j-p)(p-i)+(j-1)(p-1)}
      (\mu^c \otimes \id^{\otimes 3}) \circ
( \id \otimes \gs \otimes \id)
      \circ (\Delta_{2, j-1, 1} \otimes \id^{\otimes 2})
      \\[1mm]
      &&
      \hspace*{7.3cm}
            \circ
      (\Delta_{j, p, j} \otimes \id) \circ 
      \gD_{p+j-1,n+2-j-p,i+j-1}
      \\[1mm]
   &
      \stackrel{\scriptscriptstyle{\eqref{orzo}}}{=}
      &
      \Sum^{n+1}_{j=1} \Sum^{n+2-j}_{p=1} \Sum^{p}_{i=1} 
      (-1)^{\foo (n+1-j-p)(p-i)+(j-1)(p-1)}
      (\mu^c \otimes \id^{\otimes 3}) 
      \circ (\Delta_{2, p, 2} \otimes \id^{\otimes 2})
\\[1mm]
      &&
      \hspace*{6.3cm}
       \circ
            (\Delta_{p+1, j-1, 1} \otimes \id)
                 \circ 
      \gD_{p+j-1,n+2-j-p,i+j-1}
      \\[1mm]
   &
      \stackrel{\scriptscriptstyle{}}{=}
      &
      \Sum^{n+1}_{p=1} \Sum^{n+1}_{j=p} \Sum^{p}_{i=1} 
      (-1)^{\foo (n-j)(i-1)+j(p-1)}
      (\mu^c \otimes \id^{\otimes 3}) \circ
       (\Delta_{2, p, 2} \otimes \id^{\otimes 2})
            \circ
            (\Delta_{p+1, j-p, 1} \otimes \id)
 \\[1mm]
      &&
      \hspace*{10.3cm}
                 \circ 
            \gD_{j,n+1-j,j+1-i}
\\[1mm]
            &
      \stackrel{\scriptscriptstyle{}}{=:}
      &
             (2)
          \end{array}
  \end{equation*}
on $\cC(n)$,
where we re-indexed $j \mapsto j+1-p$ and $i \mapsto p+1-i$ in the last step (plus the standard double sum operations).
On the left hand side in the first row of \eqref{ciambella2}, we have that
\begin{equation*}
    \begin{array}{rcl}
      &&    
(\id \otimes \cup^c) \circ [-]_n
      \\[1mm]
   &
      \stackrel{\scriptscriptstyle{\eqref{aceto}, \eqref{sale2}}}{=}
      &
      \Sum^{n+1}_{p=1} \Sum^{p}_{i=1} \Sum^{n+2-p}_{j=1} 
      (-1)^{\foo (n-p)(p-i)}
      (\id \otimes \mu^c \otimes \id^{\otimes 2}) \circ (\id \otimes \Delta_{2, j-1, 1} \otimes \id)
      \circ (\id \otimes \Delta_{j, n+2-p-j, j})
    \\[1mm]
      &&
      \hspace*{11.2cm}
     \circ \gD_{p,n+1-p,i}
      \\[1mm]
   &
      \stackrel{\scriptscriptstyle{\eqref{orzo}}}{=}
      &
      \Sum^{n+1}_{p=1} \Sum^{p}_{i=1} \Sum^{n+2-p}_{j=1} 
      (-1)^{\foo (n-p)(p-i)}
      (\id \otimes \mu^c \otimes \id^{\otimes 2}) \circ (\id \otimes \Delta_{2, j-1, 1} \otimes \id)
        \circ (\Delta_{p, j, i} \otimes \id)
    \\[1mm]
      &&
      \hspace*{9.8cm}
      \circ \gD_{p+j-1,n+2-p-j,j+i-1}
      \\[1mm]
   &
      \stackrel{\scriptscriptstyle{\eqref{orzo}}}{=}
      &
      \Sum^{n+1}_{p=1} \Sum^{p}_{i=1} \Sum^{n+2-p}_{j=1} 
      (-1)^{\foo (n-p)(p-i)}
      (\id \otimes \mu^c \otimes \id^{\otimes 2}) \circ (\Delta_{p, 2, i} \otimes \id^{\otimes 2})
      \circ (\Delta_{p+1, j-1, i} \otimes \id)
    \\[1mm]
      &&
      \hspace*{9.8cm}
      \circ \gD_{p+j-1,n+2-p-j,j+i-1}
      \\[1mm]
   &
      \stackrel{\scriptscriptstyle{}}{=}
      &
      \Sum^{n+1}_{p=1} \Sum^{n+1}_{j=p} \Sum^{p}_{i=1}  
      (-1)^{\foo (n-p)(p-i)}
      (\id \otimes \mu^c \otimes \id^{\otimes 2}) \circ (\Delta_{p, 2, i} \otimes \id^{\otimes 2}) \circ (\Delta_{p+1, j-p, i} \otimes \id)
  \\[1mm]
      &&
      \hspace*{10.8cm}
        \circ \gD_{j,n+1-j,j-p+i}
    \\[1mm]
   &
      \stackrel{\scriptscriptstyle{}}{=:}
      &
  (3)
    \end{array}
  \end{equation*}
on $\cC(n)$ again, where here we re-indexed $j \mapsto p+j-1$ in the last step. Next, we will show that these terms correspond to certain summands of the term $(d \otimes \id^{\otimes 2}) \circ F$. To this end, note first that
for this computation $F$ can be more conveniently rewritten the following way:
   \begin{equation}
    \label{pici2}
    F = \Sum^{n+1}_{p=1} \Sum^{n+1}_{r=p} \Sum^{p}_{i=1} \Sum^{i}_{j=1}
    (-1)^{\foo (-1)^{\foo (p+q)j + (p+r-1)i+ n}}
    (\gD_{p+1,r-p,p-i+1} \otimes \id) \circ \gD_{r, n-r+1, r-j+1},
    \end{equation}
 seen  as a map $\cC(n) \to \cC(p+1) \otimes \cC(r-p) \otimes \cC(n-r+1)$, as
a
   little multiple sum yoga reveals.
   Note that the sign is the same as in the slightly longer expression
   $(-1)^{\foo (n-r)(j-1) + (r-p-1)(i-1)+ p}$, which, however, is sometimes more useful to consider in subsequent calculations: with respect to the faces from \eqref{riso1}--\eqref{riso3}, we have on $\cC(n)$:
\begin{equation*}
    \begin{array}{rcl}
      &&    
(d_{\rm last} \otimes \id^{\otimes 2}) \circ F
      \\[1mm]
   &
      \stackrel{\scriptscriptstyle{\eqref{pici2}, \eqref{riso3}}}{=}
      &
      \Sum^{n+1}_{p=1} \Sum^{n+1}_{r=p} \Sum^{p}_{i=1} \Sum^{i}_{j=1} 
      (-1)^{\foo (n-r)(j-1) + (r-p-1)(i-1)+ 1}
      (\mu^c \otimes \id^{\otimes 3}) \circ (\Delta_{2, p, 1} \otimes \id^{\otimes 2})
  \\[1mm]
      &&
      \hspace*{6.8cm}
      \circ (\Delta_{p+1, r-p, p+1-i} \otimes \id) \circ \gD_{r,n+1-r,r+1-j}
                \end{array}
    \end{equation*}

    
       \begin{equation*}
    \begin{array}{rcl}
       &
      \stackrel{\scriptscriptstyle{}}{=}
      &
      \Sum^{n+1}_{p=1} \Sum^{n+1}_{r=p} \Sum^{p}_{i=1}
 (-1)^{\foo (r-p-1)(p-i)+ 1}
      (\mu^c \otimes \id^{\otimes 3}) \circ (\Delta_{2, p, 1} \otimes \id^{\otimes 2})
 \\[1mm]
      &&
      \hspace*{7.8cm}
       \circ (\Delta_{p+1, r-p, i} \otimes \id) \circ \gD_{r,n+1-r,r}
      \\[1mm]
      &&
       +
\Sum^{n+1}_{p=2} \Sum^{n+1}_{r=p} \Sum^{p}_{i=2} \Sum^{i}_{j=2}
      (-1)^{\foo (n-r)(j-1) + (r-p-1)(i-1)+ 1}
(\mu^c \otimes \id^{\otimes 3}) \circ (\Delta_{2, p, 1} \otimes \id^{\otimes 2})
 \\[1mm]
      &&
      \hspace*{6.8cm}
 \circ (\Delta_{p+1, r-p, p+1-i} \otimes \id) \circ \gD_{r,n+1-r,r+1-j}
\\[1mm]
&
      \stackrel{\scriptscriptstyle{}}{=:}
      &
       (1') + (4),
    \end{array}
\end{equation*}
where we re-indexed $i \mapsto p+1-i$ in the second step in the first sum. One notes that $(1')$ equals minus $(1)$ (by changing names of summation indices), and hence cancels out in \eqref{ciambella2}. Next, we have
\begin{equation*}
    \begin{array}{rcl}
      &&    
(d_{0} \otimes \id^{\otimes 2}) \circ F
      \\[1mm]
   &
      \stackrel{\scriptscriptstyle{\eqref{pici2}, \eqref{riso1}}}{=}
      &
      \Sum^{n+1}_{p=1} \Sum^{n+1}_{r=p} \Sum^{p}_{i=1} \Sum^{i}_{j=1} 
      (-1)^{\foo (n-r)(j-1) + (r-p-1)(i-1)+ p}
      (\mu^c \otimes \id^{\otimes 3}) \circ (\Delta_{2, p, 2} \otimes \id^{\otimes 2})
 \\[1mm]
      &&
      \hspace*{6.8cm}
      \circ (\Delta_{p+1, r-p, p-i+1} \otimes \id) \circ \gD_{r,n+1-r,r+1-j}
\\[1mm]
      &
      \stackrel{\scriptscriptstyle{}}{=}
      &
      \Sum^{n+1}_{p=1} \Sum^{n+1}_{r=p} \Sum^{p}_{j=1}
 (-1)^{\foo (n-r)(j-1) + r(p-1) + 1}
      (\mu^c \otimes \id^{\otimes 3}) \circ (\Delta_{2, p, 2} \otimes \id^{\otimes 2})
 \\[1mm]
      &&
      \hspace*{6.8cm}
      \circ (\Delta_{p+1, r-p, 1} \otimes \id) \circ \gD_{r,n+1-r,r+1-j}
\\[1mm]
      &&
       +
\Sum^{n+1}_{p=2} \Sum^{n+1}_{r=p} \Sum^{p-1}_{i=1} \Sum^{i}_{j=1}
      (-1)^{\foo (n-r)(j-1) + (r-p-1)(i-1)+ p}
(\mu^c \otimes \id^{\otimes 3}) \circ (\Delta_{2, p, 2} \otimes \id^{\otimes 2})
 \\[1mm]
      &&
      \hspace*{6.8cm}
\circ (\Delta_{p+1, r-p, p-i+1} \otimes \id) \circ \gD_{r,n+1-r,r+1-j}
      \\[1mm]
&
      \stackrel{\scriptscriptstyle{}}{=:}
      &
      (2') + (5),
    \end{array}
\end{equation*}
and one notes that $(2')$ equals minus $(2)$, again by changing names of summation indices. To end this part with respect to the intermediate faces, let us compute:
\begin{equation*}
    \begin{array}{rcl}
      &&    
\Sum^{{\rm max}-1}_{k=1} (-1)^k (d_k \otimes \id^{\otimes 2}) \circ F
      \\[1mm]
   &
      \stackrel{\scriptscriptstyle{\eqref{pici2}, \eqref{riso2}}}{=}
      &
      \Sum^{n+1}_{p=1} \Sum^{n+1}_{r=p} \Sum^{p}_{i=1} \Sum^{i}_{j=1} \Sum^{p}_{k=1} 
      (-1)^{\foo (n-r)(j-1) + (r-p-1)(i-1)+ p+k}
      (\id \otimes \mu^c \otimes \id^{\otimes 2}) \circ (\Delta_{p, 2, k} \otimes \id^{\otimes 2}) 
      \\[1mm]
      &&
      \hspace*{6.5cm}
 \circ (\Delta_{p+1, r-p, p-i+1} \otimes \id)     \circ \gD_{r,n+1-r,r+1-j}
\\[1mm]
&
      \stackrel{\scriptscriptstyle{}}{=}
      &
      \Sum^{n+1}_{p=1} \Sum^{n+1}_{r=p}  \Sum^{p}_{i=1} \Sum^{p}_{j=i} \Sum^{p}_{k=1}  
      (-1)^{\foo (n-r)(p-j) + (r-p-1)(p-i)+ p+k}
      (\id \otimes \mu^c \otimes \id^{\otimes 2}) \circ (\Delta_{p, 2, k} \otimes \id^{\otimes 2}) 
      \\[1mm]
      &&
      \hspace*{7cm}
   \circ (\Delta_{p+1, r-p, i} \otimes \id)   \circ \gD_{r,n+1-r,r+j-p}
\\[1mm]
&
      \stackrel{\scriptscriptstyle{}}{=}
      &
      \pig(
      \Sum^{n+1}_{p=2} \Sum^{n+1}_{r=p}  \Sum^{p}_{k=2} \Sum^{k-1}_{i=1} \Sum^{p}_{j=i} 
      +
\Sum^{n+1}_{p=2} \Sum^{n+1}_{r=p}  \Sum^{p}_{k=1} \Sum^{p}_{i=k+1} \Sum^{p}_{j=i} 
      \pig)
      (-1)^{\foo (n-r)(p-j) + (r-p-1)(p-i)+ p+k}
   \\[3mm]
      &&
      \hspace*{2.5cm}
      (\id \otimes \mu^c \otimes \id^{\otimes 2}) \circ (\Delta_{p, 2, k} \otimes \id^{\otimes 2})
      \circ (\Delta_{p+1, r-p, i} \otimes \id)
      \circ \gD_{r,n+1-r,r+j-p}
   \\[1mm]
      &&
   +
\Sum^{n+1}_{p=1} \Sum^{n+1}_{r=p}  \Sum^{p}_{k=1} \Sum^{p}_{j=k}      
      (-1)^{\foo (n-r)(p-j) + (r-p)(p-k)}
      (\id \otimes \mu^c \otimes \id^{\otimes 2}) \circ (\Delta_{p, 2, k} \otimes \id^{\otimes 2})
   \\[1mm]
      &&
      \hspace*{7.5cm}
         \circ (\Delta_{p+1, r-p, k} \otimes \id)
         \circ \gD_{r,n+1-r,r+j-p}
         \\[1mm]
         &
         \stackrel{\scriptscriptstyle{}}{=}
         &
      \pig(
      \Sum^{n+1}_{p=2} \Sum^{n+1}_{r=p}  \Sum^{p}_{k=2} \Sum^{k-1}_{i=1} \Sum^{p}_{j=i} 
      +
\Sum^{n+1}_{p=2} \Sum^{n+1}_{r=p}  \Sum^{p-1}_{k=1} \Sum^{p}_{i=k+1} \Sum^{p}_{j=i} 
      \pig)
      (-1)^{\foo (n-r)(p-j) + (r-p-1)(p-i)+ p+k}
   \\[3mm]
      &&
   \hspace*{2.5cm}
     (\id \otimes \mu^c \otimes \id^{\otimes 2}) \circ (\Delta_{p, 2, k} \otimes \id^{\otimes 2})
         \circ (\Delta_{p+1, r-p, i} \otimes \id)
      \circ \gD_{r,n+1-r,r+j-p}
 \\[1mm]
       &&
   +
\Sum^{n+1}_{p=2} \Sum^{n+1}_{r=p}  \Sum^{p-1}_{k=1} \Sum^{p}_{j=k+1}      
      (-1)^{\foo (n-r)(p-j) + (r-p)(p-k)}
      (\id \otimes \mu^c \otimes \id^{\otimes 2}) \circ (\Delta_{p, 2, k} \otimes \id^{\otimes 2})
   \\[1mm]
      &&
      \hspace*{7.5cm}
         \circ (\Delta_{p+1, r-p, k} \otimes \id)
      \circ \gD_{r,n+1-r,r+j-p}   
             \end{array}
    \end{equation*}

    
       \begin{equation*}
    \begin{array}{rcl}
       &&
   +
\Sum^{n+1}_{p=1} \Sum^{n+1}_{r=p}  \Sum^{p}_{k=1}       
      (-1)^{\foo (n-p)(p-k)}
      (\id \otimes \mu^c \otimes \id^{\otimes 2}) \circ (\Delta_{p, 2, k} \otimes \id^{\otimes 2})
   \\[1mm]
      &&
      \hspace*{7.5cm}
         \circ (\Delta_{p+1, r-p, k} \otimes \id)
      \circ \gD_{r,n+1-r,r+k-p}   
         \\[1mm]
         &
      \stackrel{\scriptscriptstyle{}}{=:}
      &
       (6) + (7) + (8) + (3'),
    \end{array}
\end{equation*}
where we re-indexed $i \mapsto p+1-i$ and $j \mapsto p+1-j$ in the second step. Here, one observes that $(3)$ and $(3')$ are the same on both sides of Eq.~\eqref{ciambella2}.

At this point, we are left with showing that the terms $(4), (5), (6), (7)$, and $(8)$ cancel with $(\id \otimes d \otimes \id) \circ F$, along with $(\id^{\otimes 2} \otimes d) \circ F$ and $F \circ d$ in \eqref{ciambella2}, with their respective signs.
Indeed, by repeatedly applying \eqref{orzo}, let us compute for the last face $d_n$ on $\cC(n)$:
\begin{equation*}
    \begin{array}{rcl}
      &&    
F \circ d_n
      \\[1mm]
   &
      \stackrel{\scriptscriptstyle{\eqref{pici2}, \eqref{riso3}}}{=}
      &
      \Sum^{n}_{p=1} \Sum^{n}_{r=p} \Sum^{p}_{i=1} \Sum^{i}_{j=1} 
      (-1)^{\foo (n-1-r)(j-1) + (r-p-1)(i-1)+ p+n}
      (\mu^c \otimes \id^{\otimes 3}) \circ (\id \otimes \Delta_{p+1, r-p, p+1-i} \otimes \id) 
  \\[1mm]
      &&
      \hspace*{8.5cm}
    \circ (\id \otimes \Delta_{r, n-r, r+1-j})   \circ \gD_{2,n-1,1}
\\[1mm]
      &
      \stackrel{\scriptscriptstyle{\eqref{orzo}}}{=}
      &
      \Sum^{n}_{p=1} \Sum^{n}_{r=p} \Sum^{p}_{i=1} \Sum^{i}_{j=1} 
      (-1)^{\foo (n-1-r)(j-1) + (r-p-1)(i-1)+ p+n}
      (\mu^c \otimes \id^{\otimes 3}) \circ (\Delta_{2, p+1, 1} \otimes \id^{\otimes 2}) 
  \\[1mm]
      &&
  \hspace*{6.5cm}
  \circ (\Delta_{p+2, r-p, p+1-i} \otimes \id)
       \circ \gD_{r+1,n-r,r+1-j}
\\[1mm]
      &
      \stackrel{\scriptscriptstyle{}}{=}
&
      \Sum^{n+1}_{p=2} \Sum^{n+1}_{r=p} \Sum^{p}_{i=2} \Sum^{i}_{j=2} 
      (-1)^{\foo (n-r)j + (r-p-1)i+ p+n+1}
      (\mu^c \otimes \id^{\otimes 3}) \circ (\Delta_{2, p, 1} \otimes \id^{\otimes 2}) 
  \\[1mm]
      &&
      \hspace*{6.5cm}
\circ (\Delta_{p+1, r-p, p+1-i} \otimes \id)
      \circ \gD_{r,n+1-r,r+1-j},
\end{array}
\end{equation*} 
where we re-indexed $p \mapsto p+1$, $r \mapsto r+1$, $i \mapsto i+1$, and $j \mapsto j+1$ in the last step. One directly notices that this equals minus the term $(4)$ obtained before.
    The same way, one shows that
$
F \circ d_0
$
equals minus $(5)$, which we omit.

In order to continue, consider
\begin{equation*}
    \begin{array}{rcl}
      &&    
- (\tau \otimes \id) \circ (d_{\rm last} \otimes \id^{\otimes 2}) \circ (\tau \otimes \id) \circ F
      \\[1mm]
   &
      \stackrel{\scriptscriptstyle{\eqref{pici2}, \eqref{riso3}}}{=}
      &
      \Sum^{n}_{p=1} \Sum^{n+1}_{r=p+1} \Sum^{p}_{i=1} \Sum^{i}_{j=1} 
      (-1)^{\foo (n-r)(j-1)+ (r-p-1)i + 1}
      (\id \otimes \mu^c \otimes \id^{\otimes 2}) \circ (\id \otimes \Delta_{2, r-p-1, 1} \otimes \id)
  \\[1mm]
      &&
      \hspace*{7cm}
     \circ (\Delta_{p+1, r-p, p-i+1} \otimes \id)
       \circ \gD_{r,n+1-r,r+1-j}
\\[1mm]
      &
      \stackrel{\scriptscriptstyle{\eqref{orzo}}}{=}
      &
      \Sum^{n}_{p=1} \Sum^{n+1}_{r=p+1} \Sum^{p}_{i=1} \Sum^{i}_{j=1} 
      (-1)^{\foo (n-r)(j-1)+ (r-p-1)i +1}
      (\id \otimes \mu^c \otimes \id^{\otimes 2}) \circ (\Delta_{p+1, 2, p+1-i} \otimes \id^{\otimes 2})
  \\[1mm]
      &&
      \hspace*{6.7cm}
      \circ (\Delta_{p+2, r-p-1, p+1-i} \otimes \id)
       \circ \gD_{r,n+1-r,r+1-j}
\\[1mm]
      &
      \stackrel{\scriptscriptstyle{}}{=}
      &
      \Sum^{n+1}_{p=2} \Sum^{n+1}_{r=p} \Sum^{p-1}_{i=1} \Sum^{p-i}_{j=1} 
      (-1)^{\foo (n-r)(j-1)+ (r-p)(p-i) +1}
      (\id \otimes \mu^c \otimes \id^{\otimes 2}) \circ (\Delta_{p, 2, i} \otimes \id^{\otimes 2})
 \\[1mm]
      &&
     \hspace*{7.5cm}
      \circ (\Delta_{p+1, r-p, i} \otimes \id)
       \circ \gD_{r,n+1-r,r+1-j}
\\[1mm]
      &
      \stackrel{\scriptscriptstyle{}}{=}
      &
      \Sum^{n+1}_{p=2} \Sum^{n+1}_{r=p} \Sum^{p-1}_{i=1} \Sum^{p}_{j=i+1} 
      (-1)^{\foo (n-r)(p-j)+ (r-p)(p-i) +1}
      (\id \otimes \mu^c \otimes \id^{\otimes 2}) \circ (\Delta_{p, 2, i} \otimes \id^{\otimes 2})
 \\[1mm]
      &&
     \hspace*{7.5cm}
      \circ (\Delta_{p+1, r-p, i} \otimes \id)
      \circ \gD_{r,n+1-r,r+j-p}
         \\[1mm]
         &
      \stackrel{\scriptscriptstyle{}}{=:}
      &
       (8'),
      \end{array}
\end{equation*} 
where in the third step we re-indexed first $p \mapsto p+1$ and afterwards $i \mapsto p-i$, whereas in the fourth step we re-indexed $j \mapsto p-j+1$. One notices that this is minus the term $(8)$ above, by renominating summation indices. With analogous steps just performed, we obtain:
\begin{equation*}
    \begin{array}{rcl}
      &&
-      (\tau \otimes \id) \circ (d_0 \otimes \id^{\otimes 2}) \circ (\tau \otimes \id) \circ F
      \\[1mm]
   &
      \stackrel{\scriptscriptstyle{\eqref{pici2}, \eqref{riso1}, \eqref{orzo}}}{=}
      &
      \Sum^{n+1}_{p=2} \Sum^{n+1}_{r=p} \Sum^{p-1}_{i=1} \Sum^{p}_{j=i+1} 
      (-1)^{\foo (n-r)(p-j)+ (r-p)(p-i-1)}
      (\id \otimes \mu^c \otimes \id^{\otimes 2}) \circ (\Delta_{p, 2, i} \otimes \id^{\otimes 2})
 \\[1mm]
      &&
     \hspace*{6.5cm}
      \circ (\Delta_{p+1, r-p, i+1} \otimes \id)
      \circ \gD_{r,n+1-r,r+j-p}
         \\[1mm]
         &
      \stackrel{\scriptscriptstyle{}}{=:}
      &
       (7'''),
      \end{array}
\end{equation*} 
while we can further decompose:
\begin{equation*}
    \begin{array}{rcl}
 (7)
&
      \stackrel{}{=}
      &
      \Sum^{n+1}_{p=3} \Sum^{n+1}_{r=p} \Sum^{p-2}_{k=1} \Sum^{p}_{i=k+2} \Sum^{p}_{j=i} 
      (-1)^{\foo (n-r)(p-j)+ (r-p-1)(p-i) + p + k}
      (\id \otimes \mu^c \otimes \id^{\otimes 2}) \circ (\Delta_{p, 2, k} \otimes \id^{\otimes 2})
 \\[1mm]
      &&
     \hspace*{7.5cm}
      \circ (\Delta_{p+1, r-p, i} \otimes \id)
      \circ \gD_{r,n+1-r,r+j-p}
 \\[1mm]
      &&
+ 
    \Sum^{n+1}_{p=2} \Sum^{n+1}_{r=p} \Sum^{p-1}_{k=1} \Sum^{p}_{j=k+1} 
      (-1)^{\foo (n-r)(p-j)+ (r-p)(p-k-1) + 1}
    (\id \otimes \mu^c \otimes \id^{\otimes 2}) \circ (\Delta_{p, 2, k} \otimes \id^{\otimes 2})
\\[1mm]
      &&
     \hspace*{7.5cm}
    \circ (\Delta_{p+1, r-p, k+1} \otimes \id)
      \circ \gD_{r,n+1-r,r+j-p}
     \\[1mm]
         &
      \stackrel{\scriptscriptstyle{}}{=:}
      &
(7') + (7'').
      \end{array}
\end{equation*} 
It is clear by inspection exchanging index names that $(7'')$ equals minus $(7''')$. Moreover, after a couple of operations similar to those before, and by re-indexing $i \mapsto p-i+1$ and $j \mapsto p-j+1$,
\begin{equation*}
    \begin{array}{rcl}
      &&
      (\id \otimes \tau) \circ
(\tau \otimes \id) \circ
(d_{\rm last} \otimes \id^{\otimes 2}) \circ (\tau \otimes \id) \circ (\id \otimes \tau) \circ F
      \\[1mm]
   &
      \stackrel{\scriptscriptstyle{\eqref{pici2}, \eqref{riso3}}}{=}
      &
      \Sum^{n}_{p=1} \Sum^{n}_{r=p} \Sum^{p}_{i=1} \Sum^{i}_{j=1} 
      (-1)^{\foo (n-r)(p-j) + (r-p-1)(p-i) + p + n}
      (\id^{\otimes 2} \otimes \mu^c \otimes \id) \circ (\id^{\otimes 2} \otimes \Delta_{2, n-r, 1})
 \\[1mm]
      &&
     \hspace*{6.5cm}
      \circ (\Delta_{p+1, r-p, p+1-i} \otimes \id)
      \circ \gD_{r,n+1-r,r+j-p}
     \\[1mm]
   &
      \stackrel{\scriptscriptstyle{\eqref{orzo}}}{=}
      &
      \Sum^{n}_{p=1} \Sum^{n}_{r=p} \Sum^{p}_{j=1} \Sum^{j}_{i=1} 
      (-1)^{\foo (n-r)(p-j) + (r-p-1)(p-i) + p + n}
      (\id \otimes \mu^c \otimes \id^{\otimes 2}) \circ (\Delta_{p+1, 2, j+1} \otimes \id^{\otimes 2})
\\[1mm]
      &&
     \hspace*{6.5cm}
       \circ (\Delta_{p+2, r-p, i} \otimes \id)
      \circ \gD_{r+1,n-r,r+j-p}
       \\[1mm]
         &
      \stackrel{\scriptscriptstyle{}}{=:}
      &
       (6'''),
      \end{array}
\end{equation*} 
holds true, 
and likewise
\begin{equation*}
    \begin{array}{rcl}
      &&
      (\id \otimes \tau) \circ
(\tau \otimes \id) \circ
(d_0 \otimes \id^{\otimes 2}) \circ (\tau \otimes \id) \circ (\id \otimes \tau) \circ F
      \\[1mm]
   &
     \stackrel{\scriptscriptstyle{}}{=}
     &
    \Sum^{n}_{p=1} \Sum^{n}_{r=p} \Sum^{p}_{j=1} \Sum^{j}_{i=1} 
      (-1)^{\foo (n-r)(p-j) + (r-p-1)(p-i-1)+1}
    (\id \otimes \mu^c \otimes \id^{\otimes 2}) \circ (\Delta_{p+1, 2, j+1} \otimes \id^{\otimes 2})
 \\[1mm]
      &&
     \hspace*{6.5cm}
    \circ (\Delta_{p+2, r-p, i} \otimes \id)
      \circ \gD_{r+1,n-r,r+j+1-p}
        \\[1mm]
         &
      \stackrel{\scriptscriptstyle{}}{=:}
      &
       (6''''),
      \end{array}
\end{equation*}
can be obtained. On the other hand, decomposing and re-indexing $p \mapsto p-1$, $r \mapsto r-1$, and $k \mapsto k-1$, and calling $j$ by $k$ and vice versa, 
\begin{equation*}
    \begin{array}{rcl}
 (6)
&
      \stackrel{}{=}
      &
      \Sum^{n}_{p=1} \Sum^{n}_{r=p} \Sum^{p}_{j=1} \Sum^{j}_{i=1} \Sum^{p+1}_{k=i} 
      (-1)^{\foo (n-r-1)(p-1-k) + (r-p-1)(p-i-1) + p+j}
      (\id \otimes \mu^c \otimes \id^{\otimes 2})
\\[1mm]
      &&
     \hspace*{4.5cm}
       \circ (\Delta_{p+1, 2, j+1} \otimes \id^{\otimes 2}) \circ (\Delta_{p+2, r-p, i} \otimes \id)
      \circ \gD_{r+1,n-r,r+k-p}
      \\[1mm]
      &
      \stackrel{}{=}
      &
      \pig(\Sum^{n}_{p=2} \Sum^{n}_{r=p} \Sum^{p}_{j=2} \Sum^{j-1}_{i=1} \Sum^{j-1}_{k=i}
      +
        \Sum^{n}_{p=2} \Sum^{n}_{r=p} \Sum^{p-1}_{j=1} \Sum^{j}_{i=1} \Sum^{p+1}_{k=j+2} \pig) (-1)^{\foo (n-r-1)(p-1-k) + (r-p-1)(p-i-1) + p+j} 
\\[4mm]
      &&
     \hspace*{2.0cm}
       (\id \otimes \mu^c \otimes \id^{\otimes 2})   \circ (\Delta_{p+1, 2, j+1} \otimes \id^{\otimes 2}) \circ (\Delta_{p+2, r-p, i} \otimes \id)
     \circ \gD_{r+1,n-r,r+k-p}
     \\[1mm]
     &&
     +  \Sum^{n}_{p=1} \Sum^{n}_{r=p} \Sum^{p}_{j=1} \Sum^{j}_{i=1} 
     (-1)^{\foo (n-r)(p-1-j) + (r-p-1)(p-i-1) + 1} (\id \otimes \mu^c \otimes \id^{\otimes 2})
     \circ (\Delta_{p+1, 2, j+1} \otimes \id^{\otimes 2})
 \\[1mm]
      &&
     \hspace*{7.5cm}
     \circ (\Delta_{p+2, r-p, i} \otimes \id)
           \circ \gD_{r+1,n-r,r+j-p}
     \\[1mm]
     &&
     +  \Sum^{n}_{p=1} \Sum^{n}_{r=p} \Sum^{p}_{j=1} \Sum^{j}_{i=1} 
     (-1)^{\foo (n-r)(p-j) + (r-p-1)(p-i-1)}
     (\id \otimes \mu^c \otimes \id^{\otimes 2})
     \circ (\Delta_{p+1, 2, j+1} \otimes \id^{\otimes 2})
 \\[1mm]
      &&
     \hspace*{7.5cm}
     \circ (\Delta_{p+2, r-p, i} \otimes \id)
          \circ \gD_{r+1,n-r,r+j+1-p}
          \\[1mm]
          &
      \stackrel{\scriptscriptstyle{}}{=:}
      &
(9) + (10) + (6') + (6''),
      \end{array}
\end{equation*} 
we see that $(6')$ equals minus the term $(6''')$, while $(6'')$ is equal to minus $(6'''')$.

At this stage, we are left with the terms $(7')$, $(9)$, as well as $(10)$, and still have to compute the terms $\sum^{n-1}_{k=1} (-1)^k F \circ d_k$, and also  $\sum^{{\rm max}-1}_{k=1} (-1)^k (\id \otimes d_k \otimes \id) \circ F$ as well as  $\sum^{{\rm max}-1}_{k=1} (-1)^k (\id^{\otimes 2} \otimes d_k) \circ F$, all with their respective signs.
To start with,
\begin{equation*}
    \begin{array}{rcl}
&& \Sum^{n-1}_{k=1} (-1)^k F \circ d_k
\\[1mm]
      &
      \stackrel{\scriptscriptstyle{\eqref{pici2}, \eqref{riso2}}}{=}
      &
      \Sum^{n}_{p=1} \Sum^{n}_{r=p} \Sum^{p}_{i=1} \Sum^{i}_{j=1} \Sum^{n-1}_{k=1} 
      (-1)^{\foo (n-1-r)(j-1)+ (r-p-1)(i-1) + p + k}
      (\id^{\otimes 3} \otimes \mu^c) \circ (\Delta_{p+1, r-p, p-i+1} \otimes \id^{\otimes 2})
\\[1mm]
 &&
 \hspace*{8.5cm}
       \circ (\Delta_{r, n-r, r+1-j} \otimes \id)
      \circ \gD_{n-1,2,k}
      \\[1mm]
         &
      \stackrel{\scriptscriptstyle{}}{=:}
      &
(11),
      \end{array}
\end{equation*} 
while
\begin{equation*}
    \begin{array}{rcl}
      &&
- \Sum^{{\rm max}-1}_{k=1} (-1)^k (\tau \otimes \id) \circ (d_k \otimes \id^{\otimes 2}) \circ (\tau \otimes \id) \circ F
\\[1mm]
      &
      \stackrel{\scriptscriptstyle{\eqref{pici2}, \eqref{riso2}, \eqref{orzo}}}{=}
      &
      \Sum^{n+1}_{p=1} \Sum^{n+1}_{r=p} \Sum^{p}_{i=1} \Sum^{i}_{j=1} \Sum^{r-p-1}_{k=1} 
      (-1)^{\foo (n-r)(j-1)+ (r-p-1)(i-1) + k}
      (\id^{\otimes 2} \otimes \mu^c \otimes \id)
\\[1mm]
     &&     \hspace*{2.5cm}
      \circ (\Delta_{p+1, r-p-1, p-i+1} \otimes \id^{\otimes 2})
       \circ (\Delta_{r-1, 2, k+p-i} \otimes \id)
      \circ \gD_{r,n-r+1,r-j+1}
\\[1mm]
      &
      \stackrel{\scriptscriptstyle{}}{=}
      &
      \Sum^{n-1}_{p=1} \Sum^{n+1}_{r=p+2} \Sum^{p}_{i=1} \Sum^{i}_{j=1} \Sum^{r-p-1}_{k=1} 
      (-1)^{\foo (n-r)(j-1)+ (r-p-1)(i-1) + k}
      (\id^{\otimes 2} \otimes \mu^c \otimes \id)
\\[1mm]
     &&     \hspace*{2.5cm}
      \circ (\Delta_{p+1, r-p-1, p-i+1} \otimes \id^{\otimes 2})
       \circ (\Delta_{r-1, 2, k+p-i} \otimes \id)
      \circ \gD_{r,n-r+1,r-j+1}
\\[1mm]
      &
      \stackrel{\scriptscriptstyle{\eqref{orzo}}}{=}
      &
      \Sum^{n-1}_{p=1} \Sum^{n+1}_{r=p+2} \Sum^{p}_{i=1} \Sum^{i}_{j=1} \Sum^{r-p-1}_{k=1} 
      (-1)^{\foo (n-r)(j-1)+ (r-p-1)(i-1) + k}
      (\id^{\otimes 3} \otimes \mu^c)
\\[1mm]
     &&     \hspace*{2.5cm}
      \circ (\Delta_{p+1, r-p-1, p-i+1} \otimes \id^{\otimes 2})
       \circ (\Delta_{r-1, n-r+1, r-j} \otimes \id)
       \circ \gD_{n-1,2,k+p-i}
       \\[1mm]
      &
      \stackrel{\scriptscriptstyle{}}{=}
      &
      \Sum^{n-1}_{p=1} \Sum^{n}_{r=p+1} \Sum^{p}_{i=1} \Sum^{i}_{j=1} \Sum^{r-i}_{k=p-i+1} 
      (-1)^{\foo (n-r-1)(j-1)+ (r-p-1)(i-1) + k + p +1}
      (\id^{\otimes 3} \otimes \mu^c)
\\[1mm]
     &&     \hspace*{4.0cm}
      \circ (\Delta_{p+1, r-p, p-i+1} \otimes \id^{\otimes 2})
       \circ (\Delta_{r, n-r, r-j+1} \otimes \id)
      \circ \gD_{n-1,2,k}     
      \\[1mm]
         &
      \stackrel{\scriptscriptstyle{}}{=:}
      &
(12),
      \end{array}
\end{equation*} 
where we re-indexed $r \mapsto r-1$ and $k \mapsto k+p-i$ in the fourth step.
Likewise, 
\begin{equation*}
    \begin{array}{rcl}
      &&
\Sum^{{\rm max}-1}_{k=1} (-1)^k
      (\id \otimes \tau) \circ
(\tau \otimes \id) \circ
       (d_k
      \otimes \id^{\otimes 2}) \circ (\tau \otimes \id) \circ (\id \otimes \tau) \circ F
\\[1mm]
      &
      \stackrel{\scriptscriptstyle{\eqref{pici2}, \eqref{riso2}}}{=}
      &
      \Sum^{n+1}_{p=1} \Sum^{n+1}_{r=p} \Sum^{p}_{i=1} \Sum^{i}_{j=1} \Sum^{n-r}_{k=1} 
      (-1)^{\foo (n-r)(j-1)+ (r-p-1)(i-1) + r+p+k + 1}
      (\id^{\otimes 3} \otimes \mu^c) \circ (\id^{\otimes 2} \otimes \Delta_{n-r, 2, k})
\\[1mm]
     &&     \hspace*{7.1cm}
       \circ (\Delta_{p+1, r-p, p-i+1} \otimes \id)
      \circ \gD_{r,n-r+1,r-j+1}
\\[1mm]
      &
      \stackrel{\scriptscriptstyle{}}{=}
      &
      \Sum^{n-1}_{p=1} \Sum^{n-1}_{r=p} \Sum^{p}_{i=1} \Sum^{i}_{j=1} \Sum^{n-r}_{k=1} 
      (-1)^{\foo (n-r)(j-1)+ (r-p-1)(i-1) + r+p+k + 1}
      (\id^{\otimes 3} \otimes \mu^c)
\\[1mm]
     &&     \hspace*{4.2cm}
      \circ (\Delta_{p+1, r-p, p-i+1} \otimes \id^{\otimes 2})
       \circ (\id \otimes \Delta_{n-r, 2, k})
      \circ \gD_{r,n-r+1,r-j+1}
\\[1mm]
      &
      \stackrel{\scriptscriptstyle{\eqref{orzo}}}{=}
      &
      \Sum^{n-1}_{p=1} \Sum^{n-1}_{r=p} \Sum^{p}_{i=1} \Sum^{i}_{j=1} \Sum^{n-r}_{k=1} 
      (-1)^{\foo (n-r)(j-1)+ (r-p-1)(i-1) + r+p+k + 1}
      (\id^{\otimes 3} \otimes \mu^c)
\\[1mm]
     &&     \hspace*{3.9cm}
      \circ (\Delta_{p+1, r-p, p-i+1} \otimes \id^{\otimes 2})
       \circ (\Delta_{r, n-r, r-j+1} \otimes \id)
       \circ \gD_{n-1,2,k+r-j}
       \\[1mm]
      &
      \stackrel{\scriptscriptstyle{}}{=}
      &
   \Sum^{n-1}_{p=1} \Sum^{n-1}_{r=p} \Sum^{p}_{i=1} \Sum^{i}_{j=1} \Sum^{n-j}_{k=r-j+1} 
      (-1)^{\foo (n-r-1)(j-1)+ (r-p-1)(i-1) + p+k + 1}
   (\id^{\otimes 3} \otimes \mu^c)
\\[1mm]
     &&     \hspace*{4.5cm}
   \circ (\Delta_{p+1, r-p, p-i+1} \otimes \id^{\otimes 2})
       \circ (\Delta_{r, n-r, r-j+1} \otimes \id)
       \circ \gD_{n-1,2,k}
         \\[1mm]
         &
      \stackrel{\scriptscriptstyle{}}{=:}
      &
(13),
      \end{array}
\end{equation*} 
by re-indexing $k \mapsto k+r-j$ in the last step. Moreover, the term $(7')$ is of the form as $(11)$--$(13)$.

\noindent Indeed, 
\begin{equation*}
    \begin{array}{rcl}
 (7')
&
      \stackrel{\scriptscriptstyle{\eqref{orzo}}}{=}
      &
      \Sum^{n+1}_{p=3} \Sum^{n+1}_{r=p} \Sum^{p-2}_{k=1} \Sum^{p}_{i=k+2} \Sum^{p}_{j=i} 
      (-1)^{\foo (n-r)(p-j)+ (r-p-1)(p-i) + p+k}
      (\id^{\otimes 2} \otimes \mu^c \otimes \id)
 \\[1mm]
      &&
 \hspace*{4.8cm}
 \circ (\Delta_{p, r-p, i-1} \otimes \id^{\otimes 2})
      \circ (\Delta_{r-1, 2, k} \otimes \id)
     \circ \gD_{r,n-r+1,r+j-p}
                 \end{array}
    \end{equation*}

    
       \begin{equation*}
    \begin{array}{rcl}
 &
      \stackrel{\scriptscriptstyle{}}{=}
      &
      \Sum^{n}_{p=2} \Sum^{n}_{r=p} \Sum^{p-1}_{i=1} \Sum^{i}_{j=1} \Sum^{p-i}_{k=1} 
      (-1)^{\foo (n-r-1)(j-1)+ (r-p-1)(i-1) + p+k+1}
      (\id^{\otimes 2} \otimes \mu^c \otimes \id)
\\[1mm]
      &&
     \hspace*{4.8cm}
       \circ (\Delta_{p+1, r-p, p-i+1} \otimes \id^{\otimes 2})
           \circ (\Delta_{r, 2, k} \otimes \id) \circ \gD_{r+1,n-r,r-j+2}
\\[1mm]
      &
      \stackrel{\scriptscriptstyle{\eqref{orzo}}}{=}
      &
      \Sum^{n}_{p=2} \Sum^{n}_{r=p} \Sum^{p-1}_{i=1} \Sum^{i}_{j=1} \Sum^{p-i}_{k=1} 
      (-1)^{\foo (n-r-1)(j-1)+ (r-p-1)(i-1) + p+k+1}
      (\id^{\otimes 3} \otimes \mu^c)
 \\[1mm]
      &&
     \hspace*{4.8cm}
      \circ (\Delta_{p+1, r-p, p-i+1} \otimes \id^{\otimes 2})
      \circ (\Delta_{r, n-r, r-j+1} \otimes \id)
\circ \Delta_{n-1, 2, k}
\\[1mm]
         &
      \stackrel{\scriptscriptstyle{}}{=:}
      &
(14),
      \end{array}
\end{equation*} 
where we re-indexed $p \mapsto p-1$, $r \mapsto r-1$, $j \mapsto p-j +1$, as well as $i \mapsto p-i+2$, and also reordered the sums in the second step. By mere inspection, one observes:
\begin{equation*}
    \begin{array}{rcl}
      &&
      (11)+(14)+(12)+(13)
            \\[1mm]
      &
      \stackrel{\scriptscriptstyle{}}{=}
      &
      \Sum^{n}_{p=2} \Sum^{n}_{r=p} \Sum^{p}_{i=1} \Sum^{i}_{j=1}
      \pig(
\Sum^{n-1}_{k=1}
-  \   \Sum^{p-i}_{k=1}
- \!\! \Sum^{r-i}_{k=p-i+1}
  -  \!\!  \Sum^{n-j}_{k=r-j+1}
  \pig)
      (-1)^{\foo (n-r-1)(j-1)+ (r-p-1)(i-1) + p+k}
       \\[3mm]
  &&
     \hspace*{3.1cm}
(\id^{\otimes 3} \otimes \mu^c)
     \circ (\Delta_{p+1, r-p, p-i+1} \otimes \id^{\otimes 2})
     \circ (\Delta_{r, n-r, r-j+1} \otimes \id)
\circ \Delta_{n-1, 2, k}
            \\[3mm]
      &
      \stackrel{\scriptscriptstyle{}}{=}
      &
      \Sum^{n}_{p=2} \Sum^{n}_{r=p} \Sum^{p}_{i=1} \Sum^{i}_{j=1}
      \pig(
\Sum^{n-1}_{k=r-i+1}
  -    \Sum^{n-j}_{k=r-j+1}
  \pig)
      (-1)^{\foo (n-r-1)(j-1)+ (r-p-1)(i-1) + p+k}
      (\id^{\otimes 3} \otimes \mu^c)
 \\[3mm]
      &&
     \hspace*{4.8cm}
  \circ (\Delta_{p+1, r-p, p-i+1} \otimes \id^{\otimes 2})
     \circ (\Delta_{r, n-r, r-j+1} \otimes \id)
\circ \Delta_{n-1, 2, k}
\\[1mm]
     &
      \stackrel{\scriptscriptstyle{}}{=}
      &
      \Sum^{n}_{p=2} \Sum^{n}_{r=p} \Sum^{p}_{i=1} \Sum^{i}_{j=1}
      \pig(
\Sum^{r-j}_{k=r-i+1}
  +    \Sum^{n-1}_{k=n-j+1}
  \pig)
      (-1)^{\foo (n-r-1)(j-1)+ (r-p-1)(i-1) + p+k}
      (\id^{\otimes 3} \otimes \mu^c)
 \\[3mm]
      &&
     \hspace*{4.8cm}
  \circ (\Delta_{p+1, r-p, p-i+1} \otimes \id^{\otimes 2})
     \circ (\Delta_{r, n-r, r-j+1} \otimes \id)
\circ \Delta_{n-1, 2, k}
\\[1mm]
&
      \stackrel{\scriptscriptstyle{}}{=}
      &
   \pig( \Sum^{n}_{p=2} \Sum^{n}_{r=p} \Sum^{p}_{i=2} \Sum^{i-1}_{j=1}
      \Sum^{r-j}_{k=r-i+1}
      +
\Sum^{n}_{p=2} \Sum^{n}_{r=p} \Sum^{p}_{i=2} \Sum^{i}_{j=2}
      \Sum^{n-1}_{k=n-j+1}
  \pig)
      (-1)^{\foo (n-r-1)(j-1)+ (r-p-1)(i-1) + p+k}
     \\[3mm]
      &&
     \hspace*{3.1cm}
 (\id^{\otimes 3} \otimes \mu^c)
     \circ (\Delta_{p+1, r-p, p-i+1} \otimes \id^{\otimes 2})
     \circ (\Delta_{r, n-r, r-j+1} \otimes \id)
\circ \Delta_{n-1, 2, k}
\\[1mm]
         &
      \stackrel{\scriptscriptstyle{}}{=:}
      &
(15) + (16).
      \end{array}
\end{equation*} 
The alert reader will have noticed that the only thing left to show is that these two terms correspond to $(9)$ and $(10)$. Indeed, let us show this to finalise this admittedly long proof:
\begin{equation*}
    \begin{array}{rcl}
  (9)
&
      \stackrel{\scriptscriptstyle{}}{=}
      &
    \Sum^{n}_{p=2} \Sum^{n}_{r=p} \Sum^{p}_{j=2} \Sum^{j-1}_{i=1} \Sum^{j-1}_{k=i}
     (-1)^{\foo (n-r-1)(p-k-1)+ (r-p-1)(p-i-1) + p+j} (\id \otimes \mu^c \otimes \id^{\otimes 2})
   \\[1mm]
      &&
     \hspace*{4.5cm}
    \circ (\Delta_{p+1, 2, j+1} \otimes \id^{\otimes 2})
 \circ (\Delta_{p+2, r-p, i} \otimes \id)
     \circ \gD_{r+1,n-r,r+k-p}
     \\[1mm]
     &
      \stackrel{\scriptscriptstyle{}}{=}
      &
    \Sum^{n}_{p=2} \Sum^{n}_{r=p} \Sum^{p}_{j=2} \Sum^{j-1}_{i=1} \Sum^{j-1}_{k=i}
     (-1)^{\foo (n-r-1)(p-k-1)+ (r-p-1)(p-i-1) + p+j} (\id^{\otimes 3} \otimes \mu^c)
\circ ( \id^{\otimes 2} \otimes \gs) 
    \\[1mm]
      &&
     \hspace*{2.2cm}
\circ (\id \otimes \gs \otimes \id)
     \circ (\Delta_{p+1, 2, j+1} \otimes \id^{\otimes 2}) \circ (\Delta_{p+2, r-p, i} \otimes \id)
     \circ \gD_{r+1,n-r,r+k-p}
     \\[1mm]
        &
      \stackrel{\scriptscriptstyle{\eqref{orzo}}}{=}
      &
    \Sum^{n}_{p=2} \Sum^{n}_{r=p} \Sum^{p}_{j=2} \Sum^{j-1}_{i=1} \Sum^{j-1}_{k=i}
     (-1)^{\foo (n-r-1)(p-k-1)+ (r-p-1)(p-i-1) + p+j} (\id^{\otimes 3} \otimes \mu^c)
 \circ  (\Delta_{p+1, r-p, i} \otimes \id^{\otimes 2})
    \\[1mm]
      &&
     \hspace*{6.5cm}
 \circ ( \id \otimes \gs)
     \circ (\Delta_{r, 2, j+r-p} \otimes \id)
     \circ \gD_{r+1,n-r,r+k-p}
     \\[1mm]
             &
      \stackrel{\scriptscriptstyle{\eqref{orzo}}}{=}
      &
    \Sum^{n}_{p=2} \Sum^{n}_{r=p} \Sum^{p}_{j=2} \Sum^{j-1}_{i=1} \Sum^{j-1}_{k=i}
     (-1)^{\foo (n-r-1)(p-k-1)+ (r-p-1)(p-i-1) + p+j} (\id^{\otimes 3} \otimes \mu^c)
 \circ  (\Delta_{p+1, r-p, i} \otimes \id^{\otimes 2})
 \\[1mm]
      &&
     \hspace*{7.7cm}
\circ (\Delta_{r, n-r, r+k-p} \otimes \id)
     \circ \gD_{n-1,2,n+j-p-1}
     \\[1mm]
     &
      \stackrel{\scriptscriptstyle{}}{=}
      &
    \Sum^{n}_{p=2} \Sum^{n}_{r=p} \Sum^{p-1}_{i=1} \Sum^{p-1}_{k=i} \Sum^{k}_{j=i}
     (-1)^{\foo (n-r-1)(p-j-1)+ (r-p-1)(p-i-1) + p+k+1} (\id^{\otimes 3} \otimes \mu^c)
 \\[1mm]
      &&
     \hspace*{4.5cm}
    \circ  (\Delta_{p+1, r-p, i} \otimes \id^{\otimes 2})
\circ (\Delta_{r, n-r, r+j-p} \otimes \id)
     \circ \gD_{n-1,2,n+k-p}
\\[1mm]
&
      \stackrel{\scriptscriptstyle{}}{=}
      &
    \Sum^{n}_{p=2} \Sum^{n}_{r=p} \Sum^{p}_{i=2} \Sum^{p-1}_{j=p-i+1} \Sum^{p-1}_{k=j}
     (-1)^{\foo (n-r-1)(p-j-1)+ (r-p-1)i + p+k+1} (\id^{\otimes 3} \otimes \mu^c)
 \\[1mm]
      &&
     \hspace*{4.0cm}
    \circ  (\Delta_{p+1, r-p, p-i+1} \otimes \id^{\otimes 2})
\circ (\Delta_{r, n-r, r+j-p} \otimes \id)
     \circ \gD_{n-1,2,n+k-p}
        \end{array}
     \end{equation*}

    
        \begin{equation*}
     \begin{array}{rcl}
&
      \stackrel{\scriptscriptstyle{}}{=}
      &
    \Sum^{n}_{p=2} \Sum^{n}_{r=p} \Sum^{p}_{i=2} \Sum^{i}_{j=2} \Sum^{p-1}_{k=p-j+1}
     (-1)^{\foo (n-r-1)j + (r-p-1)i + p+k+1} (\id^{\otimes 3} \otimes \mu^c)
 \circ  (\Delta_{p+1, r-p, p-i+1} \otimes \id^{\otimes 2})
 \\[1mm]
      &&
     \hspace*{7.7cm}
\circ (\Delta_{r, n-r, r-j+1} \otimes \id)
     \circ \gD_{n-1,2,n+k-p}
\\[1mm]
&
      \stackrel{\scriptscriptstyle{}}{=}
      &
    \Sum^{n}_{p=2} \Sum^{n}_{r=p} \Sum^{p}_{i=2} \Sum^{i}_{j=2} \Sum^{n-1}_{k=n-j+1}
     (-1)^{\foo (n-r-1)(j-1) + (r-p-1)(i-1) + p+k+1} (\id^{\otimes 3} \otimes \mu^c)
\\[1mm]
      &&
     \hspace*{4.2cm}
     \circ  (\Delta_{p+1, r-p, p-i+1} \otimes \id^{\otimes 2})
\circ (\Delta_{r, n-r, r-j+1} \otimes \id)
     \circ \gD_{n-1,2,k},
          \end{array}
\end{equation*}
where we re-indexed $j \mapsto j-1$ and afterwards exchanged the names of $j$ and $k$ in line five, reordered the sums as well as re-indexed $i \mapsto p-i+1$ in line six, re-indexed $j \mapsto p-j+1$ in line seven, and finally re-indexed $k \mapsto n+k-p$ in line eight. This is now directly seen to equal minus $(16)$.
In a similar fashion, one shows that $(10)$ and $(15)$ cancel out, the main difference in the computation being that for showing $(9) = - (16)$ the first line in the relations \eqref{orzo} read from right to left was used twice, while now one needs first the first line of the relations \eqref{orzo} and afterwards its last line, again in both cases from right to left; we omit this.
This concludes the proof of Proposition \ref{miglio}.
\end{proof}

\addtocontents{toc}{\SkipTocEntry}
\section*{Acknowledgements}

N.K.\ wishes to thank Sophie Chemla and the {\em Institut de Math\'ematiques de Jussieu -- Paris Rive Gauche}, where part of this work has been achieved, for hospitality and support.

F.P.\ wishes to thank the {\em Università di Roma Tor Vergata} for the invitation to speak at the Algebra Seminar, thanks to which this project took shape. She was supported by the European Commission under the Horizon 2020 research and innovation program, Marie Skłodowska-Curie grant agreement no.~945332.

  This paper is part of the activities of the MIUR Excellence Department
Project MatMod@TOV (CUP:E83C23000330006) and has been partially supported by the {\em National
  Group for Algebraic and Geometric Structures, and their Applications (GNSAGA-INdAM)}.

\providecommand{\bysame}{\leavevmode\hbox to3em{\hrulefill}\thinspace}
\providecommand{\MR}{\relax\ifhmode\unskip\space\fi M`R }
\providecommand{\MRhref}[2]{%
  \href{http://www.ams.org/mathscinet-getitem?mr=#1}{#2}}
\providecommand{\href}[2]{#2}

\end{document}